\theoremstyle{plain}
\newtheorem{thm}{Theorem}[section]
\newtheorem{cor}[thm]{Corollary}
\newtheorem{lem}[thm]{Lemma}
\newtheorem{prop}[thm]{Proposition}
\theoremstyle{definition}
\newtheorem{defn}[thm]{Definition}
\newtheorem{note}[thm]{Note}
\theoremstyle{remark}
\newcommand{\beast}{\begin{eqnarray*}}
\newcommand{\eeast}{\end{eqnarray*}}
\title{{\bf The classification of Leonard triples\\
 of QRacah type}
}
\author{Hau-wen Huang}
\date{\today}
\begin{document}
\maketitle

\begin{abstract}
Let $\mathbb{K}$ denote an algebraically closed field. Let $V$
denote a vector space over $\mathbb{K}$ with finite positive
dimension. By a Leonard triple on $V$ we mean an ordered triple of
linear transformations in ${\rm End}(V)$ such that for each of
these transformations there exists a basis of $V$ with respect to
which the matrix representing that transformation is diagonal and
the matrices representing the other two transformations are
irreducible tridiagonal. There is a family of Leonard triples said
to have QRacah type. This is the most general type of Leonard
triple. We classify the Leonard triples of QRacah type up to
isomorphism. We show that any Leonard triple of QRacah type
satisfies the $\mathbb{Z}_3$-symmetric Askey-Wilson relations.
\end{abstract}

\section{Leonard pairs and Leonard systems}
We begin by recalling the notion of a Leonard pair. We will use the following terms. Let $X$ denote a square matrix. Then $X$ is called {\it tridiagonal} whenever each nonzero entry lies on either the diagonal, the subdiagonal, or the superdiagonal. Assume $X$ is tridiagonal. Then $X$ is called {\it irreducible} whenever each entry on the subdiagonal is nonzero and each entry on the superdiagonal is nonzero.

\medskip

We now define a Leonard pair. For the rest of this paper $\mathbb{K}$ will denote an algebraically closed field.

\begin{defn}\label{defn1.1}(\cite[Definition 1.1]{ter2001}).
Let $V$ denote a vector space over $\mathbb{K}$ with finite positive dimension. By a {\it Leonard pair} on $V,$ we mean an ordered pair of linear transformations $A:V\rightarrow V$ and $A^*:V\rightarrow V$ that satisfy both (i), (ii) below.
\begin{enumerate}
\item[(i)] There exists a basis for $V$ with respect to which the matrix representing $A$ is irreducible tridiagonal and the matrix representing $A^*$ is diagonal.

\item[(ii)] There exists a basis for $V$ with respect to which the matrix representing $A^*$ is irreducible tridiagonal and the matrix representing $A$ is diagonal.
\end{enumerate}
\end{defn}

\begin{note}\label{note1.1}
According to a common notational convention $A^*$ denotes the conjugate-transpose
of $A.$ We are not using this convention. In a Leonard pair $(A,A^*)$ the linear transformations
$A$ and $A^*$ are arbitrary subject to (i), (ii) above.
\end{note}

For the rest of this paper we fix an integer $d\geq 0.$ Let ${\rm
Mat}_{d+1}(\mathbb{K})$ denote the $\mathbb{K}$-algebra consisting
of all $d+1$ by $d+1$ matrices that have entries in $\mathbb{K}.$
We index the rows and columns by $0,1,\ldots,d.$ We let
$\mathbb{K}^{d+1}$ denote the $\mathbb{K}$-vector space consisting
of all $d+1$ by $1$ matrices that have entries in $\mathbb{K}.$ We
index the rows by $0,1,\ldots,d.$ We view $\mathbb{K}^{d+1}$ as a
left module for ${\rm Mat}_{d+1}(\mathbb{K}).$ For the rest of the
paper let $V$ denote a vector space over $\mathbb{K}$ that has
dimension $d+1.$ Let ${\rm End}(V)$ denote the
$\mathbb{K}$-algebra consisting of all linear transformations from
$V$ to $V.$ Let $\{v_i\}^d_{i=0}$ denote a basis for $V.$ For
$X\in {\rm End}(V)$ and $Y\in {\rm Mat}_{d+1}(\mathbb{K}),$ we say
$Y$ {\it represents $X$ with respect to $\{v_i\}^d_{i=0}$}
whenever $Xv_j=\sum^d_{i=0}Y_{ij}v_i$ for $0\leq j\leq d.$ For
$A\in {\rm End}(V),$ by an {\it eigenvalue} of $A$ we mean a root
of the characteristic polynomial of $A.$ We say that $A$ is {\it
multiplicity-free} whenever it has $d+1$ distinct eigenvalues.
Assume $A$ is multiplicity-free.  Let $\{\theta_i\}^d_{i=0}$
denote an ordering of the eigenvalues of $A.$ For $0\leq i\leq d$
let $V_i$ denote the eigenspace of $A$ associated with $\theta_i.$
Define $E_i\in {\rm End}(V)$ such that $(E_i-I)V_i = 0$ and
$E_iV_j=0$ for $j \not= i$ ($0\leq j \leq d$). Here $I$ denotes
the identity of ${\rm End}(V).$ We call $E_i$ the {\it primitive
idempotent} of $A$ associated with $\theta_i.$



\begin{lem}\label{lem1.0} {\rm (\cite[Lemma~1.3]{ter2001}).}
Let $(A,A^*)$ denote a Leonard pair on $V.$ Then each of $A,A^*$ is multiplicity-free.
\end{lem}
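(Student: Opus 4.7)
By the built-in symmetry of Definition \ref{defn1.1} under interchanging $A$ and $A^*$ (which simply swaps conditions (i) and (ii)), it suffices to prove that $A^*$ is multiplicity-free; the same argument applied to the pair $(A^*,A)$ will then handle $A$. My strategy is to extract one piece of information from each of the two bases guaranteed by the Leonard pair hypothesis and combine them via the minimal polynomial.

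First, condition (i) supplies a basis of $V$ with respect to which $A^*$ is diagonal. In particular $A^*$ is diagonalizable, so its minimal polynomial splits into distinct linear factors. Consequently, the degree of the minimal polynomial of $A^*$ equals the number of distinct eigenvalues of $A^*$.

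Second, condition (ii) supplies a basis $\{u_i\}_{i=0}^{d}$ with respect to which $A^*$ is represented by an irreducible tridiagonal matrix $T$. I will establish that $u_0$ is a cyclic vector for $A^*$, namely that $\{u_0, A^*u_0, (A^*)^2 u_0, \ldots, (A^*)^{d} u_0\}$ is a basis of $V$. The verification is a brief induction on $k$: assuming $(A^*)^{k}u_0$ lies in the span of $u_0,\ldots,u_k$ with a nonzero coefficient on $u_k$, the coefficient of $u_{k+1}$ in $(A^*)^{k+1}u_0$ comes exclusively from the subdiagonal entry $T_{k+1,k}$ applied to that $u_k$-coefficient, and this entry is nonzero by irreducibility. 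Hence the change-of-basis matrix from $\{u_0,\ldots,u_d\}$ to $\{u_0,A^*u_0,\ldots,(A^*)^{d} u_0\}$ is upper triangular with nonzero diagonal, so the latter is a basis. Expressing $(A^*)^{d+1}u_0$ in this basis then produces a monic annihilator of $A^*$ of degree $d+1$, so the minimal polynomial of $A^*$ has degree exactly $d+1$.

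Combining the two inputs, the number of distinct eigenvalues of $A^*$ equals $d+1 = \dim V$, so $A^*$ is multiplicity-free; swapping the roles of (i) and (ii) proves the same for $A$. I do not anticipate a serious obstacle: the only substantive ingredient is the cyclic vector induction, which is entirely routine. The main point to be careful about is the index-bookkeeping in the tridiagonal matrix, since irreducibility means both the subdiagonal and the superdiagonal entries are nonzero, and only one of these is actually needed for the induction step.
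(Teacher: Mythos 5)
Your proof is correct. The lemma is stated in the paper only as a citation of \cite[Lemma~1.3]{ter2001}, so there is no in-paper proof to compare against verbatim, but the argument you give is sound and complete modulo routine details. You correctly extract both needed ingredients from Definition~\ref{defn1.1}: from (i), diagonalizability of $A^*$, which makes the minimal polynomial squarefree so that its degree counts the distinct eigenvalues; from (ii), an irreducible tridiagonal representation $T$, which by the induction you describe makes $u_0$ a cyclic vector, so the minimal polynomial has degree exactly $d+1=\dim V$. Combining gives $d+1$ distinct eigenvalues, and the symmetry of (i), (ii) under swapping $A,A^*$ dispenses with $A$ itself. Two small steps you leave implicit but do correctly rely on: the passage from $p(A^*)u_0=0$ to $p(A^*)=0$ uses cyclicity of $u_0$, and the lower bound $\deg\,(\text{min.\ poly.})\geq d+1$ uses the linear independence of $\{(A^*)^k u_0\}_{k=0}^d$; both are standard and implied by what you wrote.

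The argument in the cited reference takes a mildly different but essentially dual route on the tridiagonal side: rather than exhibiting a cyclic vector, one observes that for any scalar $\theta$ the matrix $T-\theta I$ has rank at least $d$ (delete the first row and last column, say, to get a triangular $d\times d$ submatrix whose diagonal consists of nonzero sub- or superdiagonal entries of $T$), so every eigenspace of $A^*$ has dimension at most one; paired with diagonalizability from (i), this again forces $d+1$ distinct eigenvalues. Your version bounds the minimal-polynomial degree from below while the reference bounds geometric multiplicities from above; they are equivalent in substance and comparable in effort, so the choice is purely stylistic.
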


We now define a Leonard system.

\begin{defn}\label{defn1.2} {\rm (\cite[Definition~1.4]{ter2001}).}
By a {\it Leonard system} on $V$ we mean a sequence
$\Phi=(A;\{E_i\}_{i=0}^d;$ $A^*;\{E_i^*\}_{i=0}^d)$ that satisfies
(i)--(v) below.
\begin{enumerate}
\item[(i)] Each of $A,A^*$ is a multiplicity-free element in ${\rm End}(V).$

\item[(ii)] $\{E_i\}_{i=0}^d$ is an ordering of the primitive idempotents of $A.$

\item[(iii)] $\{E_i^*\}_{i=0}^d$ is an ordering of the primitive idempotents of $A^*.$

\item[(iv)]
$
E_iA^*E_j=\left\{
\begin{array}{ll}
0 \qquad &\hbox{if $|i-j|>1,$}\\
\not=0 \qquad &\hbox{if $|i-j|=1$}
\end{array}
\right. \qquad \qquad \hbox{$(0\leq i,j\leq d)$.}
$

\item[(v)]
$
E_i^*AE_j^*=\left\{
\begin{array}{ll}
0 \qquad &\hbox{if $|i-j|>1,$}\\
\not=0 \qquad &\hbox{if $|i-j|=1$}
\end{array}
\right. \qquad \qquad \hbox{$(0\leq i,j\leq d)$.}
$
\end{enumerate}
We refer to $d$ as the {\it diameter} of $\Phi$ and say $\Phi$ is {\it over $\mathbb{K}.$}
\end{defn}

\begin{defn}\label{nota1.1} (\cite[Definition~1.8]{ter2001}).
Let $\Phi=(A;\{E_i\}_{i=0}^d;A^*;\{E_i^*\}_{i=0}^d)$ denote a
Leonard system on $V.$ For $0\leq i\leq d$ let $\theta_i$ (resp.
$\theta_i^*$) denote the eigenvalue of $A$ (resp. $A^*$)
associated with $E_i$ (resp. $E_i^*$). We call
$\{\theta_i\}^d_{i=0}$ (resp. $\{\theta_i^*\}^d_{i=0}$) the {\it
eigenvalue sequence} (resp. {\it dual eigenvalue sequence}) of
$\Phi.$
\end{defn}

\begin{defn}\label{defn1.18} {\rm (\cite[Definition 2.5]{ter2001}).}
Let $(A;\{E_i\}_{i=0}^d;A^*;\{E_i^*\}_{i=0}^d)$ denote a
Leonard system. Define
$$
a_i = {\rm tr}(AE_i^*), \qquad  a_i^* = {\rm tr}(A^*E_i) \qquad
\qquad \hbox{$(0\leq i\leq d),$}
$$
where tr denotes trace.
\end{defn}

The scalars $\{a_i\}^d_{i=0},$ $\{a_i^*\}^d_{i=0}$ have the following interpretation.

\begin{lem}\label{lem1.3}{\rm (\cite[Lemma 10.2]{ter2002}).}
With reference to Definition~\ref{defn1.18},
\begin{align*}
E_i^*AE_i^*&=a_iE_i^*  \qquad \qquad \hbox{$(0\leq i\leq d),$}\\
E_iA^*E_i&=a_i^*E_i  \qquad \qquad \hbox{$(0\leq i\leq d).$}
\end{align*}
\end{lem}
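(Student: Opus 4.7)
The plan is to exploit the fact that each $E_i^{*}$ (resp.\ $E_i$) is a rank-one idempotent, which forces $E_i^{*}AE_i^{*}$ to be a scalar multiple of $E_i^{*}$, and then to pin down the scalar via a trace computation.

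First I would observe that in any Leonard system the transformation $A^{*}$ is multiplicity-free (Definition~\ref{defn1.2}(i)), so each eigenspace of $A^{*}$ is one-dimensional. Consequently the primitive idempotent $E_i^{*}$ has one-dimensional image, i.e.\ $E_i^{*}$ has rank $1$. Since a rank-one idempotent in $\mathrm{End}(V)$ has trace equal to $1$, we get $\mathrm{tr}(E_i^{*})=1$.

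Next, for any $X\in \mathrm{End}(V)$ the composition $E_i^{*}XE_i^{*}$ sends $V$ into the one-dimensional subspace $\mathrm{Im}\,E_i^{*}$, and it annihilates $\ker E_i^{*}$. Hence $E_i^{*}XE_i^{*}$ has rank at most $1$ and factors through $E_i^{*}$; more precisely, there is a unique scalar $\lambda_i\in\mathbb{K}$ with
\[
E_i^{*}XE_i^{*}=\lambda_i E_i^{*}.
\]
To identify $\lambda_i$ I would take the trace of both sides. Using $\mathrm{tr}(E_i^{*})=1$, the right-hand side gives $\lambda_i$; using the cyclicity of trace together with the idempotent relation $E_i^{*}E_i^{*}=E_i^{*}$, the left-hand side gives $\mathrm{tr}(XE_i^{*})$. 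Therefore $\lambda_i=\mathrm{tr}(XE_i^{*})$. Specializing to $X=A$ and invoking the definition of $a_i$ in Definition~\ref{defn1.18} yields the first identity $E_i^{*}AE_i^{*}=a_iE_i^{*}$.

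The second identity is proved by the completely symmetric argument with the roles of $A$ and $A^{*}$, and of $E_i^{*}$ and $E_i$, interchanged: $A$ is multiplicity-free, so $E_i$ is a rank-one idempotent, $E_iA^{*}E_i$ is a scalar multiple of $E_i$, and the trace calculation identifies that scalar as $\mathrm{tr}(A^{*}E_i)=a_i^{*}$. There is no substantive obstacle here; the only conceptual point to notice is that $E_i^{*}$ and $E_i$ have rank $1$, after which everything is a routine application of linear algebra and the trace identity $\mathrm{tr}(YZ)=\mathrm{tr}(ZY)$.
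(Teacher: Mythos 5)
Your argument is correct and is essentially the standard proof of this fact (the paper itself only cites \cite[Lemma 10.2]{ter2002} without reproducing a proof). The key observations — that $E_i^*$ is a rank-one idempotent because $A^*$ is multiplicity-free, that consequently $E_i^*XE_i^*$ is forced to be a scalar multiple of $E_i^*$, and that the scalar is recovered as $\mathrm{tr}(XE_i^*)$ via cyclicity of trace and $\mathrm{tr}(E_i^*)=1$ — are exactly the right ones, and the symmetric argument handles the second identity.
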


Let $\Phi=(A;\{E_i\}_{i=0}^d;A^*;\{E_i^*\}_{i=0}^d)$ denote a Leonard system on $V.$ Observe that each of the following three sequences is a Leonard system on $V.$
\begin{align*}
\Phi^*&:=(A^*,\{E_i^*\}_{i=0}^d;A;\{E_i\}_{i=0}^d),\\
\Phi^\downarrow&:=(A,\{E_i\}_{i=0}^d;A^*;\{E_{d-i}^*\}_{i=0}^d),\\
\Phi^\Downarrow&:=(A,\{E_{d-i}\}_{i=0}^d;A^*;\{E_i^*\}_{i=0}^d).
\end{align*}
Viewing $*,\downarrow,\Downarrow$ as permutations on the set of all Leonard systems,
\begin{align}
 *^2=\hspace{0.1cm}\downarrow^2 \hspace{0.1cm}  = \hspace{0.1cm} \Downarrow^2 \hspace{0.1cm}  =1, \qquad \qquad \qquad \quad \label{r1.1}\\
\Downarrow*=*\downarrow, \qquad \qquad \downarrow *=*\Downarrow, \qquad \qquad \downarrow\Downarrow \hspace{0.1cm} = \hspace{0.1cm} \Downarrow\downarrow.\label{r1.2}
\end{align}
The group generated by the symbols $*,\downarrow,\Downarrow$ subject to the relations (\ref{r1.1}), (\ref{r1.2}) is the dihedral group $D_4.$ We recall $D_4$ is the group of symmetries of a square, and has $8$ elements. Thus $*,\downarrow,\Downarrow$ induce an action of $D_4$ on the set of all Leonard systems.

\medskip

\begin{defn}\label{defn2.3} (\cite[Section 4]{ter2005}).
Let $\Phi=(A;\{E_i\}_{i=0}^d;A^*;\{E_i^*\}_{i=0}^d)$ denote a
Leonard system on $V.$ Then the pair $(A,A^*)$ forms a Leonard
pair on $V.$ We say this pair is {\it associated} with $\Phi.$
Observe each Leonard system is associated with a unique Leonard
pair.
\end{defn}

\begin{defn}\label{defn2.4} (\cite[Section 4]{ter2005}).
Let $(A,A^*)$ denote a Leonard pair on $V.$ By the {\it associate
class} for $(A,A^*)$ we mean the set of Leonard systems on $V$
which are associated with $(A,A^*).$ Observe this associate class
contains at least one Leonard system $\Phi.$ By \cite[Section
4]{ter2005} this associate class contains $\Phi,$
$\Phi^\downarrow,$ $\Phi^\Downarrow,$
$\Phi^{\downarrow\Downarrow}$ and no other Leonard systems.
\end{defn}



For the rest of this section let $V'$ denote a vector space over $\mathbb{K}$ with dimension $d+1.$ By a {\it $\mathbb{K}$-algebra isomorphism} from ${\rm End}(V)$ to ${\rm End}(V')$ we mean an isomorphism of $\mathbb{K}$-vector spaces $\sigma:{\rm End}(V)\to {\rm End}(V')$ such that $(XY)^\sigma=X^\sigma Y^\sigma$ for all $X,Y\in {\rm End}(V).$

\medskip

It is useful to interpret the concept of isomorphism as follows. Let $\gamma:V\to V'$ denote an isomorphism of $\mathbb{K}$-vector spaces. Define a map $\sigma:{\rm End}(V)\to {\rm End}(V')$ by $X^\sigma=\gamma X\gamma^{-1}$ for all $X\in {\rm End}(V).$ Then $\sigma$ is a $\mathbb{K}$-algebra isomorphism. Conversely let $\sigma:{\rm End}(V)\to {\rm End}(V')$ denote a $\mathbb{K}$-algebra isomorphism. By the Skolem-Noether theorem \cite[Corollary 9.122]{rotman:02} there exists an isomorphism of $\mathbb{K}$-vector spaces $\gamma:V\to V'$ such that $X^\sigma=\gamma X\gamma^{-1}$ for all $X\in {\rm End}(V).$

\medskip

We now recall the notion of isomorphism for Leonard pairs and Leonard systems.

\begin{defn}\label{defn1.16}
Let $(A,A^*)$ denote a Leonard pair on $V.$ Let $(B,B^*)$ denote a Leonard pair on $V'.$ By an {\it isomorphism of Leonard pairs from $(A,A^*)$ to $(B,B^*)$} we mean a $\mathbb{K}$-algebra isomorphism $\sigma:{\rm End}(V)\to {\rm End}(V')$ that sends $A$ to $B$ and $A^*$ to $B^*.$ We say $(A,A^*)$ and $(B,B^*)$ are {\it isomorphic} whenever there exists an isomorphism of Leonard pairs from $(A,A^*)$ to $(B,B^*).$
\end{defn}

Let $\Phi$ denote the Leonard system from Definition~\ref{nota1.1} and let $\sigma:{\rm End}(V)\rightarrow {\rm End}(V')$ denote a $\mathbb{K}$-algebra isomorphism. We write $\Phi^\sigma:=(A^\sigma;\{E_i^\sigma\}^d_{i=0};$ $A^{*\sigma};\{E_i^{*\sigma}\}^d_{i=0})$ and observe $\Phi^\sigma$ is a Leonard system on $V'.$

\begin{defn}\label{defn1.3}
Let $\Phi$ denote a Leonard system on $V.$ Let $\Phi'$ denote a Leonard system on $V'.$ By an {\it isomorphism of Leonard systems from $\Phi$ to $\Phi'$} we mean a $\mathbb{K}$-algebra isomorphism $\sigma:{\rm End}(V)\to {\rm End}(V')$ such that $\Phi^\sigma=\Phi'.$ We say $\Phi,$ $\Phi'$ are {\it isomorphic} whenever there exists an isomorphism of Leonard systems from $\Phi$ to $\Phi'.$
\end{defn}

\begin{defn}\label{defn1.20}
Let $LS=LS(d,\mathbb{K})$ denote the set consisting of the
isomorphism classes of Leonard systems over $\mathbb{K}$ that have
diameter $d.$
\end{defn}

Observe that the $D_4$ action on Leonard systems from above
Definition~\ref{defn2.3} induces a $D_4$ action on the set $LS$
from Definition~\ref{defn1.20}.

\medskip

We recall the notion of an antiautomorphism of ${\rm End}(V).$ By an {\it antiautomorphism of ${\rm End}(V)$} we mean an isomorphism of $\mathbb{K}$-vector spaces $\gamma:{\rm End}(V)\rightarrow {\rm End}(V)$ such that $(XY)^\gamma=Y^\gamma X^\gamma$ for all $X, Y\in {\rm End}(V).$

\begin{lem}\label{thm1.7}
{\rm (\cite[Theorem~6.1]{ter2008}).}
Let $(A,A^*)$ denote a Leonard pair on $V.$ Then there exists a unique antiautomorphism $\dag$ of ${\rm End}(V)$ such that $A^\dag=A$ and $A^{*\dag}=A^*.$ Moreover $X^{\dag\dag}=X$ for all $X\in {\rm End}(V).$
\end{lem}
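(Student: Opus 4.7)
The plan is to construct $\dag$ explicitly in a well-chosen basis, and then to derive both the uniqueness of $\dag$ and the involution $X^{\dag\dag}=X$ from the fact that $A$ and $A^*$ together generate $\mathrm{End}(V)$ as a $\mathbb{K}$-algebra.

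\textbf{Existence.} By Definition~\ref{defn1.1}(ii), fix a basis $\{v_i\}_{i=0}^d$ of $V$ in which $A^*$ is represented by a diagonal matrix $B^*$ and $A$ by an irreducible tridiagonal matrix $B$ with entries $b_{ij}$; so $b_{i,i-1}b_{i-1,i}\neq 0$ for $1\leq i\leq d$. Set $d_0=1$ and recursively $d_i=d_{i-1}b_{i-1,i}/b_{i,i-1}$, and let $D=\mathrm{diag}(d_0,\ldots,d_d)$. Then $DB=B^tD$ by construction, and $DB^{*}=B^{*t}D$ is automatic since $B^*$ is diagonal. Define $\dag$ to be the antiautomorphism of $\mathrm{End}(V)$ whose effect on matrices in the basis $\{v_i\}$ is $X\mapsto D^{-1}X^tD$; this is an antiautomorphism because transposition reverses products while conjugation by $D$ is an algebra automorphism. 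By the choice of $D$, $A^\dag=A$ and $A^{*\dag}=A^*$.

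\textbf{Generation.} The pivotal observation is that $A$ and $A^*$ generate $\mathrm{End}(V)$. Since $A^*$ is multiplicity-free, $\mathbb{K}[A^*]$ contains each primitive idempotent $E_i^*$, and in the basis $\{v_i\}$ each $E_i^*$ is the diagonal matrix unit $e_{ii}$. Hence $E_i^*AE_j^*$ lies in the subalgebra generated by $A$ and $A^*$, and in this basis it equals $b_{ij}e_{ij}$. Irreducibility of $B$ gives $b_{i,i+1}\neq 0$ and $b_{i+1,i}\neq 0$ for all $i$, so the subalgebra contains every $e_{ii}$, $e_{i,i+1}$, $e_{i+1,i}$; taking products of these yields every matrix unit $e_{ij}$. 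Thus the subalgebra is all of $\mathrm{End}(V)$.

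\textbf{Uniqueness and involution.} Let $\dag'$ be any antiautomorphism of $\mathrm{End}(V)$ with $A^{\dag'}=A$ and $A^{*\dag'}=A^*$. Then $\dag\circ\dag'$ is a $\mathbb{K}$-algebra automorphism of $\mathrm{End}(V)$ that fixes $A$ and $A^*$, and hence fixes the subalgebra they generate; by the generation step that subalgebra is all of $\mathrm{End}(V)$, so $\dag\circ\dag'=\mathrm{id}$. Applied with $\dag'=\dag$ itself, this gives $\dag^2=\mathrm{id}$, the asserted involution property. Combined with $\dag\circ\dag'=\mathrm{id}$, this forces $\dag'=\dag$, settling uniqueness. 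The only real content of the argument sits in the generation step; the explicit construction in the existence step and the final formal deduction are routine once generation is in place.
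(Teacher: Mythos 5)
Your proof is correct and follows the same standard route as the cited reference \cite{ter2008}: construct $\dag$ explicitly as $X\mapsto D^{-1}X^tD$ for a diagonal $D$ that symmetrizes the irreducible tridiagonal matrix of $A$ (and trivially that of $A^*$), then derive both uniqueness and $\dag^{\dag}=\mathrm{id}$ from the fact that $A,A^*$ generate $\mathrm{End}(V)$, which you prove by the usual $E_i^*AE_j^*$ matrix-unit argument. One trivial slip: the basis you invoke in the existence step (with $A^*$ diagonal and $A$ irreducible tridiagonal) comes from Definition~\ref{defn1.1}(i), not (ii).
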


\begin{defn}\label{defn1.4}
{\rm (\cite[Definition~6.2]{ter2008}).}
Let $(A,A^*)$ denote a Leonard pair on $V.$ By the {\it antiautomorphism which corresponds to} $(A,A^*)$ we mean the map $\dagger$ from Lemma~\ref{thm1.7}. 
\end{defn}


\section{The parameter array of a Leonard system}
Let $\Phi=(A;\{E_i\}^d_{i=0};A^*;\{E_i^*\}^d_{i=0})$ denote a Leonard system on $V.$ In Definition~\ref{nota1.1} we defined the eigenvalue sequence and the dual eigenvalue sequence of $\Phi.$ There are two more parameter sequences of interest to us. In order to define these, we review some results from \cite{ter2001}. For $0\leq i\leq d$ define
\begin{align}\label{e1.5}
U_i=(E_0^*V+E_1^*V+\cdots+E_i^*V)\cap(E_iV+E_{i+1}V+\cdots+E_dV).
\end{align}
By \cite[Lemma~3.8]{ter2001} each of $U_0,U_1,\ldots,U_d$ has dimension one and
\begin{align}\label{e1.6}
V=U_0+U_1+\cdots+U_d \qquad \qquad \hbox{(direct sum).}
\end{align}
The elements $A$ and $A^*$ act on $\{U_i\}^d_{i=0}$ as
follows. By \cite[Lemma~3.9]{ter2001}, both
\begin{align}
(A-\theta_iI)U_i&=U_{i+1}\qquad \hbox{$(0\leq i\leq d-1)$,}\qquad \qquad (A-\theta_dI)U_d=0, \label{e1.1}\\
(A^*-\theta_i^*I)U_i&=U_{i-1}\qquad \hbox{$(1\leq i\leq
d)$,}\qquad \qquad (A^*-\theta_0^*I)U_0=0. \label{e1.2}
\end{align}
Setting $i=0$ in (\ref{e1.5}) we find $U_0=E_0^*V.$ Combining this with (\ref{e1.1}) we find
\begin{align}\label{e1.3}
U_i=(A-\theta_{i-1}I)\cdots (A-\theta_1I)(A-\theta_0I)E_0^*V
\qquad  \qquad \hbox{$(0\leq i\leq d)$.}
\end{align}
Let $v$ denote a nonzero vector in $E_0^*V.$ By (\ref{e1.3}), for $0\leq i\leq d$ the vector $(A-\theta_{i-1}I)\cdots (A-\theta_0I)v$ is a basis for $U_i.$ By this and (\ref{e1.6}) the sequence
\begin{align}\label{e3.13}
(A-\theta_{i-1}I)\cdots (A-\theta_1I)(A-\theta_0I)v \qquad
\hbox{$(0\leq i\leq d)$}
\end{align}
is a basis for $V.$ With respect to this basis the matrices representing $A$ and $A^*$ are
\begin{align}\label{e3.12}
\left(
\begin{array}{cccccc}
\theta_0 & & & & &{\bf 0}\\
1 &\theta_1 & & & &\\
  &1 &\theta_2 & & &\\
  & &\cdot &\cdot & &\\
  & & &\cdot &\cdot &\\
{\bf 0} & & & &1 &\theta_d
\end{array}
\right),\qquad
\left(
\begin{array}{cccccc}
\theta_0^* &\varphi_1 & & & &{\bf 0}\\
  &\theta_1^* &\varphi_2 & & &\\
  & &\theta_2^* &\cdot & &\\
  & & &\cdot &\cdot &\\
  & & & &\cdot &\varphi_d\\
{\bf 0} & & & & &\theta_d^*
\end{array}
\right)
\end{align}
respectively, where $\varphi_1,\varphi_2,\ldots,\varphi_d$ are appropriate scalars in $\mathbb{K}.$ By a {\it $\Phi$-split basis} for $V$ we mean a sequence of the form (\ref{e3.13}), where $v$ is a nonzero vector in $E_0^*V.$ We call $\{\varphi_i\}^d_{i=1}$ the {\it first split sequence} of $\Phi.$ We let $\{\phi_i\}^d_{i=1}$ denote the first split sequence of $\Phi^\Downarrow$ and call this the {\it second split sequence} of $\Phi.$ For notational convenience define $\varphi_0=0,$  $\varphi_{d+1}=0,$ $\phi_0=0,$ $\phi_{d+1}=0.$

\begin{defn}\label{defn1.17} (\cite[Definition~13.3]{ter2005}).
Let $\Phi$ denote a Leonard system on $V.$ Define a map $\natural:{\rm End}(V)\rightarrow {\rm Mat}_{d+1}(\mathbb{K})$ as follows. For all $X\in {\rm End}(V)$ let $X^\natural$ denote the matrix in ${\rm Mat}_{d+1}(\mathbb{K})$ that represents $X$ with respect to a $\Phi$-split basis for $V.$ We observe $\natural:{\rm End}(V)\rightarrow {\rm Mat}_{d+1}(\mathbb{K})$ is a $\mathbb{K}$-algebra isomorphism. We call $\natural$ the {\it natural map} for $\Phi.$
\end{defn}

\begin{defn}\label{defn1.14} (\cite[Definition~22.3]{ter2008}).
Let $\Phi$ denote a Leonard system on $V.$ By the {\it parameter
array} of $\Phi$ we mean the sequence
$(\{\theta_i\}^d_{i=0},\{\theta_i^*\}^d_{i=0},$
$\{\varphi_i\}^d_{i=1},\{\phi_i\}^d_{i=1}),$ where
$\{\theta_i\}^d_{i=0}$ (resp. $\{\theta_i^*\}^d_{i=0}$) is the
eigenvalue sequence (resp. dual eigenvalue sequence) of $\Phi$ and
$\{\varphi_i\}^d_{i=1}$ (resp. $\{\phi_i\}^d_{i=1}$) is the first
split sequence (resp. second split sequence) of $\Phi.$
\end{defn}


\begin{lem}\label{lem2.5} {\rm (\cite[Lemma 5.1]{ter2001}).}
Let $\Phi$ denote a
Leonard system over $\mathbb{K}$ and let
$(\{\theta_i\}^d_{i=0},\{\theta_i^*\}^d_{i=0},$ $\{\varphi_i\}^d_{i=1},\{\phi_i\}^d_{i=1})$
denote the corresponding parameter array. Then the scalars $\{a_i\}^d_{i=0},$
$\{a_i^*\}^d_{i=0}$  from Definition~\ref{defn1.18} are given as follows. If $d=0$ then $a_0=\theta_0$ and $a_0^*=\theta^*_0.$ If $d\geq 1$ then
\begin{align*}
a_0&=\theta_0+\frac{\varphi_1}{\theta_0^*-\theta_1^*},\\
a_i&=\theta_i+\frac{\varphi_i}{\theta_i^*-\theta_{i-1}^*}+\frac{\varphi_{i+1}}{\theta_i^*-\theta_{i+1}^*} \qquad \quad \hbox{$(1\leq i\leq d-1),$}\\
a_d&=\theta_d+\frac{\varphi_d}{\theta_d^*-\theta_{d-1}^*},\\
a_0^*&=\theta_0^*+\frac{\varphi_1}{\theta_0-\theta_1},\\
a_i^*&=\theta_i^*+\frac{\varphi_i}{\theta_i-\theta_{i-1}}+\frac{\varphi_{i+1}}{\theta_i-\theta_{i+1}}
\qquad \quad \hbox{$(1\leq i\leq d-1),$}\\
a_d^*&=\theta_d^*+\frac{\varphi_d}{\theta_d-\theta_{d-1}}.\\
\end{align*}
\end{lem}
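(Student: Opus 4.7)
The plan is to work in the $\Phi$-split basis $\{u_j\}_{j=0}^d$ from~(\ref{e3.13}), in which $A$ and $A^*$ have the bidiagonal matrices displayed in~(\ref{e3.12}). The case $d=0$ is immediate from the definitions, so assume $d\geq 1$. The starting observation is that $a_i=\mathrm{tr}(AE_i^*)$ equals the $(i,i)$-entry of the matrix representing $A$ in any basis $\{\eta_j\}_{j=0}^d$ that diagonalizes $A^*$ with $A^*\eta_j=\theta_j^*\eta_j$, because in such a basis $E_i^*$ is the matrix unit at position $(i,i)$. So the task reduces to diagonalizing $A^*$ inside the split basis and reading off one matrix entry of $A$ in the resulting basis.

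Because $A^*$ acts on the split basis by the upper triangular matrix in~(\ref{e3.12}) and has distinct diagonal entries (Lemma~\ref{lem1.0}), the eigenvector $\eta_j$ for $\theta_j^*$ has the form $\eta_j=\sum_{k=0}^{j}c_k^{(j)}u_k$ with $c_j^{(j)}\neq 0$, and the eigenvalue equation $A^*\eta_j=\theta_j^*\eta_j$ collapses to the two-term recursion
\[
\varphi_{k+1}\,c_{k+1}^{(j)}=(\theta_j^*-\theta_k^*)\,c_k^{(j)}\qquad(0\leq k<j).
\]
From this recursion I extract the two ratios that will enter the trace computation, namely
\[
\frac{c_{i-1}^{(i)}}{c_i^{(i)}}=\frac{\varphi_i}{\theta_i^*-\theta_{i-1}^*},\qquad \frac{c_i^{(i+1)}}{c_{i+1}^{(i+1)}}=\frac{\varphi_{i+1}}{\theta_{i+1}^*-\theta_i^*}.
\]

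Let $P$ denote the upper triangular change-of-basis matrix whose $j$-th column is $\eta_j$, so that $a_i=(P^{-1}AP)_{ii}$. Since $P$ and $P^{-1}$ are both upper triangular while the matrix of $A$ in the split basis is lower bidiagonal, the only entries of $P^{-1}$ that can contribute to $(P^{-1}AP)_{ii}$ are $(P^{-1})_{ii}$ and $(P^{-1})_{i,i+1}$; the former is $1/c_i^{(i)}$, and the latter is determined by a single $2\times 2$ inverse as $-c_i^{(i+1)}/(c_i^{(i)}c_{i+1}^{(i+1)})$. A short bookkeeping step then yields $a_i=\theta_i+c_{i-1}^{(i)}/c_i^{(i)}-c_i^{(i+1)}/c_{i+1}^{(i+1)}$, and substituting the two ratios above delivers the stated formula. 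The conventions $\varphi_0=\varphi_{d+1}=0$ absorb the endpoint cases $i=0$ and $i=d$ uniformly.

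Finally, the formulas for $\{a_i^*\}_{i=0}^d$ follow by applying the proved formulas to the dual Leonard system $\Phi^*$. Under $\Phi\mapsto\Phi^*$ the eigenvalue and dual eigenvalue sequences interchange while the first split sequence is preserved, and $a_i^*(\Phi)=a_i(\Phi^*)$. I expect the main obstacle to be the change-of-basis step in paragraph three — tracking which entries of $P^{-1}$ survive — but the triangular structure collapses the answer to a single off-diagonal entry, so the computation is short.
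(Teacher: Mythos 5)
Your proof is correct. Note that the paper you were asked about does not actually prove this lemma — it simply cites it from Terwilliger's 2001 paper (Lemma 5.1 there), so there is no internal proof to compare against. That said, your argument is sound and is essentially the standard derivation: the split basis diagonalizes the problem enough that $a_i=\mathrm{tr}(AE_i^*)$ collapses to a single diagonal entry after an upper-triangular change of basis, and the bidiagonal structure of $A^\natural$ together with the triangularity of $P$ and $P^{-1}$ confines the contributing terms to the three entries you identify. The two-term recursion from $A^*\eta_j=\theta_j^*\eta_j$, the computation of $(P^{-1})_{i,i+1}=-c_i^{(i+1)}/(c_i^{(i)}c_{i+1}^{(i+1)})$, the sign bookkeeping that converts $-\varphi_{i+1}/(\theta^*_{i+1}-\theta^*_i)$ into $+\varphi_{i+1}/(\theta^*_i-\theta^*_{i+1})$, and the use of the conventions $\varphi_0=\varphi_{d+1}=0$ at the endpoints are all correct. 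The passage from the $a_i$ formulas to the $a_i^*$ formulas via $\Phi\mapsto\Phi^*$ is also right: by Lemma~\ref{thm1.2}(i), $\Phi^*$ swaps the eigenvalue and dual eigenvalue sequences while keeping the first split sequence, and $a_i(\Phi^*)=\mathrm{tr}(A^*E_i)=a_i^*(\Phi)$. This matches the spirit of Terwilliger's original proof, which also works through the split decomposition.
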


\begin{lem}\label{thm1.1} {\rm (\cite[Theorem~1.9]{ter2001}).}
Let
\begin{align}\label{e1.7}
(\{\theta_i\}^d_{i=0}, \{\theta_i^*\}^d_{i=0}, \{\varphi_i\}^d_{i=1}, \{\phi_i\}^d_{i=1})
\end{align}
denote a sequence of scalars taken from $\mathbb{K}.$ Then there
exists a Leonard system $\Phi$ over $\mathbb{K}$ with parameter
array {\rm (\ref{e1.7})} if and only if the following conditions {\rm
(PA1)}--{\rm (PA5)} hold.
\begin{description}
\item[{\rm  (PA1)}] $\theta_i\not=\theta_j,$ \qquad
$\theta_i^*\not=\theta_j^*$ \qquad if $i\not=j$ \qquad \qquad
\hbox{$(0\leq i,j\leq d)$.}

\item[{\rm  (PA2)}] $\varphi_i\not=0,$ \qquad $\phi_i\not=0$
\qquad \qquad \hbox{$(1\leq i\leq d)$.}

\item[{\rm  (PA3)}]
$\varphi_i=\phi_1\displaystyle{\sum\limits^{i-1}_{h=0}\frac{\theta_h-\theta_{d-h}}{
\theta_0-\theta_d}}+(\theta_i^*-\theta_0^*)(\theta_{i-1}-\theta_d)$
\qquad \qquad \hbox{$(1\leq i\leq d)$.}

\item[{\rm  (PA4)}]
$\phi_i=\varphi_1\displaystyle{\sum\limits^{i-1}_{h=0}\frac{\theta_h-\theta_{d-h}}{
\theta_0-\theta_d}}+(\theta_{i}^*-\theta_0^*)(\theta_{d-i+1}-\theta_0)$
\qquad \qquad \hbox{$(1\leq i\leq d)$.}

\item[{\rm  (PA5)}] The expressions
$$
\frac{\theta_{i-2}-\theta_{i+1}}{\theta_{i-1}-\theta_i},\qquad \frac{\theta_{i-2}^*-\theta_{i+1}^*}{\theta_{i-1}^*-\theta_i^*}
$$
~~are equal and independent of $i$ for $2\leq i\leq d-1.$
\end{description}
Moreover, if {\rm (PA1)}--{\rm (PA5)} hold then $\Phi$ is unique
up to isomorphism of Leonard systems.
\end{lem}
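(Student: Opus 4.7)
My plan is to establish the forward implication (existence of $\Phi$ $\Rightarrow$ (PA1)--(PA5)), then the converse (construct $\Phi$ from the data), and finally the uniqueness clause. For the forward direction, (PA1) follows immediately from Lemma~\ref{lem1.0}, since $A,A^*$ are multiplicity-free. For (PA2), I exploit the split decomposition $V=U_0\oplus\cdots\oplus U_d$: by (\ref{e1.1})--(\ref{e1.2}) the map $A^*-\theta_i^*I:U_i\to U_{i-1}$ is surjective between one-dimensional spaces, hence an isomorphism for $1\le i\le d$, and reading this off the split basis (\ref{e3.13}) shows that the superdiagonal entries $\varphi_i$ of (\ref{e3.12}) are nonzero; applying the same argument to $\Phi^\Downarrow$ gives $\phi_i\ne 0$.

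For (PA3) and (PA4) I would proceed by induction on $i$, using the definitions of $\varphi_i$ and $\phi_i$ in terms of actions of $A,A^*$ on split bases (respectively for $\Phi$ and $\Phi^\Downarrow$) together with the fact that passing to $\Phi^\Downarrow$ reverses the $\theta_i$. The telescoping sum $\sum_{h=0}^{i-1}(\theta_h-\theta_{d-h})/(\theta_0-\theta_d)$ in (PA3) arises because $\phi_1$ plays the role of $\varphi_1$ in the reversed split description, and propagating $\phi_1$ through the inductive step introduces one such difference at each stage. Condition (PA5) is the hardest; it is extracted by comparing the split descriptions of $\Phi$ and of $\Phi^\downarrow$, which forces a common three-term recurrence of Askey--Wilson type on both $\{\theta_i\}$ and $\{\theta_i^*\}$, yielding the $i$-independent ratios of (PA5).

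For the converse, given scalars satisfying (PA1)--(PA5), set $V=\mathbb{K}^{d+1}$ and define $A,A^*\in\mathrm{End}(V)$ by the matrices of (\ref{e3.12}) acting on the standard basis. Since the $\theta_i$ are distinct (PA1), $A$ is multiplicity-free with primitive idempotents $\{E_i\}_{i=0}^d$; similarly $A^*$ has idempotents $\{E_i^*\}_{i=0}^d$. The crux is verifying Definition~\ref{defn1.2}(iv)--(v): for (iv) one computes the transition matrix between the split basis and the $A$-eigenbasis, whose entries involve products of $\varphi_j$ and differences $\theta_j-\theta_i$; conditions (PA3)--(PA5) are then precisely what is needed to guarantee that conjugation of $A^*$ by this transition yields an irreducible tridiagonal matrix in the $A$-eigenbasis, so that $E_iA^*E_j$ is nonzero exactly when $|i-j|=1$. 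Condition (v) follows by the analogous computation with the roles of $A$ and $A^*$ exchanged.

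For uniqueness, if $\Phi'$ is another Leonard system with the same parameter array, then by (\ref{e3.12}) the matrix representations of $A,A^*$ in a $\Phi$-split basis coincide with those of $A',A^{*\prime}$ in a $\Phi'$-split basis; the $\mathbb{K}$-linear isomorphism sending one split basis to the other thus intertwines the pairs and, via Definition~\ref{defn1.17}, yields an isomorphism of Leonard systems $\Phi\to\Phi'$. I expect the main obstacle to be the verification of Definition~\ref{defn1.2}(iv) in the converse direction: extracting the tridiagonal structure from the split description requires invoking the full computational force of (PA3)--(PA5), and (PA5) plays the essential role there as the balanced condition.
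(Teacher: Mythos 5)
This lemma is cited in the paper from \cite[Theorem~1.9]{ter2001} and is not proved there, so there is no internal proof to compare against; what follows is an assessment of your sketch on its own terms.

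You have correctly identified the principal tools — the split decomposition $V=U_0\oplus\cdots\oplus U_d$ for (PA1)--(PA2), the comparison of $\Phi$ with its $D_4$-relatives for (PA3)--(PA5), and the direct construction from bidiagonal matrices via Lemma~\ref{thm1.5}-style reasoning for the converse — and your uniqueness argument via the split basis is complete and correct. However, the sketch leaves the genuinely hard steps as bare assertions. The derivation of the closed form (PA3) (and hence (PA4) by applying $\Downarrow$) is not a simple ``induction on $i$'': one must explicitly compute the change of basis between a $\Phi$-split basis and a $\Phi^\Downarrow$-split basis, track how $A^*$ transforms, and verify a nontrivial telescoping identity — this occupies several lemmas in Terwilliger's paper. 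Your attribution of (PA5) to comparing $\Phi$ with $\Phi^\downarrow$ is also slightly off: the $i$-independence of $\frac{\theta_{i-2}-\theta_{i+1}}{\theta_{i-1}-\theta_i}$ comes from the three-term recurrence forced by the tridiagonal action in the $A^*$-eigenbasis, and the equality of the two ratios in (PA5) is most naturally seen by comparing $\Phi$ with $\Phi^*$ (the swap of $A$ and $A^*$), which shows both families share the same recurrence parameter $\beta$. In the converse direction, ``conditions (PA3)--(PA5) are then precisely what is needed'' is exactly the claim that must be proved; without carrying out the transition-matrix computation and showing that the entries of $E_iA^*E_j$ vanish for $|i-j|>1$ and are nonzero for $|i-j|=1$, the proposal has not established Definition~\ref{defn1.2}(iv)--(v). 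In summary: the skeleton of your argument matches the shape of Terwilliger's proof, but the key identities that carry the weight of the theorem remain unverified.
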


\begin{defn}\label{defn1.21} (\cite[Definition~22.1]{ter2008}).
By a {\it parameter array over $\mathbb{K}$ of diameter $d$} we
mean a sequence of scalars $(\{\theta_i\}^d_{i=0},
\{\theta_i^*\}^d_{i=0}, \{\varphi_i\}^d_{i=1},
\{\phi_i\}^d_{i=1})$ taken from $\mathbb{K}$ that satisfies
(PA1)--(PA5).
\end{defn}

\begin{defn}\label{defn1.22}
Let $PA=PA(d,\mathbb{K})$ denote the set consisting of all parameter arrays over $\mathbb{K}$ that have diameter $d.$
\end{defn}

By Lemma~\ref{thm1.1} the map which sends a given Leonard system to its parameter array induces a bijection from $LS$ to $PA.$ Below Definition~\ref{defn1.20} we gave a $D_4$ action on the set $LS.$ This action induces a $D_4$ action on $PA.$ We now describe this action.

\begin{lem}\label{thm1.2}{\rm (\cite[Theorem~1.11]{ter2001}).}
Let $\Phi$ denote a Leonard system with parameter array $(\{\theta_i\}^d_{i=0},$ $\{\theta_i^*\}^d_{i=0},$$\{\varphi_i\}^d_{i=1},$$\{\phi_i\}^d_{i=1}).$ Then {\rm (i)}--{\rm (iii)} hold below.
\begin{enumerate}
\item[{\rm (i)}] The parameter array of $\Phi^*$ is $(\{\theta_i^*\}^d_{i=0},\{\theta_i\}^d_{i=0},\{\varphi_i\}^d_{i=1},\{\phi_{d-i+1}\}^d_{i=1}).$

\item[{\rm (ii)}] The parameter array of $\Phi^\downarrow$ is $(\{\theta_i\}^d_{i=0},\{\theta_{d-i}^*\}^d_{i=0},\{\phi_{d-i+1}\}^d_{i=1},\{\varphi_{d-i+1}\}^d_{i=1}).$

\item[{\rm (iii)}] The parameter array of $\Phi^\Downarrow$ is $(\{\theta_{d-i}\}^d_{i=0},\{\theta_i^*\}^d_{i=0},\{\phi_i\}^d_{i=1},\{\varphi_i\}^d_{i=1}).$
\end{enumerate}
\end{lem}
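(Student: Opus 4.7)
The eigenvalue and dual eigenvalue sequences in all three parts can be read off immediately from the way $*,\downarrow,\Downarrow$ permute the primitive idempotent sequences above Definition~\ref{defn2.3}. The substantive task is to verify the two split sequences. I would start with part (iii), which is essentially tautological: the first split sequence of $\Phi^\Downarrow$ is $\{\phi_i\}_{i=1}^d$ by the very definition of $\{\phi_i\}_{i=1}^d$ as the second split sequence of $\Phi$, and the second split sequence of $\Phi^\Downarrow$ is, by definition, the first split sequence of $(\Phi^\Downarrow)^\Downarrow = \Phi$ (using $\Downarrow^2 = 1$), which is $\{\varphi_i\}_{i=1}^d$.

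For part (i) the central claim is $\varphi_i(\Phi^*) = \varphi_i$. My plan is to telescope the formulas of Lemma~\ref{lem2.5} against the conventions $\varphi_0 = \varphi_{d+1} = 0$ to obtain, for $1 \leq k \leq d$, the dual identities
\[
\varphi_k = (\theta_{k-1}^* - \theta_k^*)\sum_{i=0}^{k-1}(a_i - \theta_i) = (\theta_{k-1} - \theta_k)\sum_{i=0}^{k-1}(a_i^* - \theta_i^*).
\]
By Lemma~\ref{lem1.3} we have $a_i(\Phi^*) = {\rm tr}(A^* E_i) = a_i^*(\Phi)$; combined with $\theta_i(\Phi^*) = \theta_i^*$ and $\theta_i^*(\Phi^*) = \theta_i$, the first identity applied to $\Phi^*$ coincides with the second identity applied to $\Phi$, yielding $\varphi_k(\Phi^*) = \varphi_k$. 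For the second split sequence of $\Phi^*$ I would exploit that the parameter array is determined by the two eigenvalue sequences together with $\{\varphi_i\}_{i=1}^d$ through (PA4): substituting the data of $\Phi^*$ into (PA4) from Lemma~\ref{thm1.1} and comparing with (PA4) applied to $\Phi$ at index $d-i+1$, the desired $\phi_i(\Phi^*) = \phi_{d-i+1}$ reduces to an equality of partial sums that can be verified term-by-term using (PA5).

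Part (ii) then follows from (i) and (iii) by the $D_4$ identity $\downarrow = *\Downarrow *$, a consequence of $\downarrow * = *\Downarrow$ and $*^2 = 1$. Applying $*$, then $\Downarrow$, then $*$ to $\Phi$ and tracking the parameter array through the formulas just established produces the expressions claimed in (ii). The main obstacle in the whole argument is the bookkeeping for the second split sequence of $\Phi^*$: reconciling the expression from (PA4) applied to $\Phi^*$ with the target $\phi_{d-i+1}$ requires using (PA5) to equate the partial sums $\sum_{h=0}^{i-1}(\theta_h^* - \theta_{d-h}^*)/(\theta_0^* - \theta_d^*)$ and $\sum_{h=0}^{d-i}(\theta_h - \theta_{d-h})/(\theta_0 - \theta_d)$, which is where the balanced hypothesis of (PA5) is genuinely needed.
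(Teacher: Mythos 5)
The paper does not prove this lemma; it cites it as \cite[Theorem~1.11]{ter2001}. So there is no internal proof to compare against, and I will evaluate your argument on its own terms.

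Your treatment of the eigenvalue and dual eigenvalue sequences, your part (iii), and the $D_4$ reduction $\downarrow \; = \; *\,\Downarrow\,*$ for part (ii) are all correct. The telescoping identities $\varphi_k = (\theta_{k-1}^*-\theta_k^*)\sum_{i=0}^{k-1}(a_i-\theta_i) = (\theta_{k-1}-\theta_k)\sum_{i=0}^{k-1}(a_i^*-\theta_i^*)$ do follow from Lemma~\ref{lem2.5} by cancellation using the conventions $\varphi_0=\varphi_{d+1}=0$, and combined with $a_i(\Phi^*)=a_i^*$, $\theta_i(\Phi^*)=\theta_i^*$, $\theta_i^*(\Phi^*)=\theta_i$ they give $\varphi_k(\Phi^*)=\varphi_k$ cleanly. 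This is a good argument and it works.

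The gap is in your handling of $\phi_i(\Phi^*)$. After substituting into (PA4), the non-sum terms cancel and you are left needing
\[
\sum_{h=0}^{i-1}\frac{\theta_h^*-\theta_{d-h}^*}{\theta_0^*-\theta_d^*}
\;=\;
\sum_{h=0}^{d-i}\frac{\theta_h-\theta_{d-h}}{\theta_0-\theta_d},
\]
and you claim this can be ``verified term-by-term using (PA5).'' That cannot be right as stated: the two sums have different numbers of terms (one has $i$ terms, the other $d-i+1$) and different variables, so there is no term-by-term matching. The correct argument has two steps. First, because the summand $\frac{\theta_h-\theta_{d-h}}{\theta_0-\theta_d}$ is antisymmetric under $h\mapsto d-h$, one has $\sum_{h=0}^{d}\frac{\theta_h-\theta_{d-h}}{\theta_0-\theta_d}=0$, and hence $\sum_{h=0}^{d-i}\frac{\theta_h-\theta_{d-h}}{\theta_0-\theta_d}=\sum_{h=0}^{i-1}\frac{\theta_h-\theta_{d-h}}{\theta_0-\theta_d}$. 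Second, one needs that the latter sum is unchanged when $\theta$ is replaced by $\theta^*$; this is where (PA5) enters, but the argument is that both sequences satisfy the \emph{same} three-term linear recursion, so $\frac{\theta_h-\theta_{d-h}}{\theta_0-\theta_d}$ depends only on the common ratio $\beta+1$, not on the sequence itself. That is a genuine (if short) lemma, not a term-by-term identity. A cleaner alternative that stays inside your telescoping framework: using $*\Downarrow=\downarrow*$ one has $\phi_i(\Phi^*)=\varphi_i\big((\Phi^*)^\Downarrow\big)=\varphi_i\big((\Phi^\downarrow)^*\big)=\varphi_i(\Phi^\downarrow)$ by the $*$-invariance you already proved. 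Then the same telescoping computation applied to $\Phi^\downarrow$ and $\Phi^\Downarrow$ reduces $\varphi_i(\Phi^\downarrow)=\phi_{d-i+1}(\Phi)$ to the identity $\sum_{i=0}^{d}a_i=\sum_{i=0}^{d}\theta_i$, which is immediate because $\sum a_i = \mathrm{tr}\big(A\sum E_i^*\big)=\mathrm{tr}(A)$. This route avoids (PA3)--(PA5) entirely and is more in the spirit of the telescoping argument you used for the first split sequence.
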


We mention a result for later use.

\begin{lem}\label{thm1.5} {\rm (\cite[Theorem~17.1]{ter2005}).}
Let $A,A^*$ denote matrices in ${\rm Mat}_{d+1}(\mathbb{K}).$ Assume that $A$ is lower bidiagonal and $A^*$ is upper bidiagonal. Then the following {\rm (i)}, {\rm (ii)} are equivalent.
\begin{enumerate}
\item[{\rm (i)}] The pair $(A,A^*)$ is a Leonard pair on $\mathbb{K}^{d+1}.$

\item[{\rm (ii)}] There exists a parameter array $(\{\theta_i\}^d_{i=0},$$\{\theta_i^*\}^d_{i=0},$$\{\varphi_i\}^d_{i=1},$$\{\phi_i\}^d_{i=1})$ over $\mathbb{K}$ such that
\begin{align*}
A_{ii}=\theta_i, \qquad \qquad A^*_{ii}=\theta_i^* \qquad \qquad \hbox{$(0\leq i\leq d)$,}\\
A_{i,i-1} A^*_{i-1,i}=\varphi_i \qquad \qquad  \qquad
\hbox{$(1\leq i\leq d)$.}
\end{align*}
\end{enumerate}
Suppose {\rm (i)}, {\rm (ii)} hold. For $0\leq i\leq d$ let $E_i$
{\rm (}resp. $E_i^*${\rm )} denote the primitive idempotent of $A$
{\rm (}resp. $A^*${\rm )} associated with $\theta_i$ {\rm (}resp.
$\theta_i^*${\rm )}. Then $(A; \{E_i\}^d_{i=0}; A^*;
\{E_i^*\}^d_{i=0})$ is a Leonard system on $\mathbb{K}^{d+1}$ with
parameter array
$(\{\theta_i\}^d_{i=0},$$\{\theta_i^*\}^d_{i=0},$$\{\varphi_i\}^d_{i=1},$$\{\phi_i\}^d_{i=1}).$
\end{lem}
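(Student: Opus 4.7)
The plan is to reduce the lemma to Lemma~\ref{thm1.1} via a diagonal similarity $D\in\mathrm{Mat}_{d+1}(\mathbb{K})$ that brings $(A,A^*)$ into the canonical split form of (\ref{e3.12}). Provided $A_{i,i-1}\neq 0$ for $1\le i\le d$, setting $D_{00}=1$ and $D_{ii}=(A_{1,0}A_{2,1}\cdots A_{i,i-1})^{-1}$ makes $DAD^{-1}$ and $DA^*D^{-1}$ equal to the left and right matrices of (\ref{e3.12}) with $\theta_i=A_{ii}$, $\theta_i^*=A^*_{ii}$, and $\varphi_i=A_{i,i-1}A^*_{i-1,i}$; this is a direct computation.

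For (ii)$\Rightarrow$(i) and the ``Moreover'' clause under (ii), PA2 gives $\varphi_i\neq 0$, so each $A_{i,i-1}$ and $A^*_{i-1,i}$ is nonzero and $D$ is defined. Lemma~\ref{thm1.1} produces a Leonard system $\Phi$ over $\mathbb{K}$ with the prescribed parameter array; transporting $\Phi$ along the natural map $\natural$ of Definition~\ref{defn1.17} yields a Leonard system on $\mathbb{K}^{d+1}$ whose underlying pair is $(DAD^{-1},DA^*D^{-1})$. Applying the $\mathbb{K}$-algebra automorphism $X\mapsto D^{-1}XD$ of $\mathrm{Mat}_{d+1}(\mathbb{K})$ then transports this to a Leonard system on $\mathbb{K}^{d+1}$ with underlying pair $(A,A^*)$, establishing (i); tracking the primitive idempotents identifies it as $(A;\{E_i\}^d_{i=0};A^*;\{E_i^*\}^d_{i=0})$ of the statement.

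For (i)$\Rightarrow$(ii), I first verify $A_{i,i-1}\neq 0$ (and symmetrically $A^*_{i-1,i}\neq 0$). If $A_{k,k-1}=0$ for some $k$, then $W=\mathrm{span}(e_0,\ldots,e_{k-1})$ is invariant under both $A$ and $A^*$; but a Leonard pair admits no nontrivial common invariant subspace, since Definition~\ref{defn1.2}(iv) applied to any associated Leonard system forces the index set $S$ with $W=\bigoplus_{i\in S}E_iV$ to be closed under $i\mapsto i\pm 1$, so $S\in\{\emptyset,\{0,\ldots,d\}\}$. The key remaining step is to produce a Leonard system $\Phi$ associated with $(A,A^*)$ having $\theta_i=A_{ii}$ and $\theta_i^*=A^*_{ii}$: since $e_0$ spans the $A^*$-eigenspace for $A^*_{00}$, Definition~\ref{defn1.2}(v) together with $Ae_0\in\mathrm{span}(e_0,e_1)$ forces the $A^*_{11}$-eigenspace to sit at a position neighboring that of $A^*_{00}$; iterating, the positions of $A^*_{00},A^*_{11},\ldots,A^*_{dd}$ trace a Hamiltonian path on $\{0,\ldots,d\}$, so they must coincide with $0,1,\ldots,d$ in either order. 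A symmetric argument using $e_d$ handles the $\theta_i$, and applying $\downarrow,\Downarrow$ (Definition~\ref{defn2.4}) as needed supplies the desired $\Phi$. A $\Phi$-split basis is then $\{c_ie_i\}^d_{i=0}$ for suitable nonzero scalars $c_i$ satisfying $c_{i+1}=c_iA_{i+1,i}$; substituting into (\ref{e3.12}) gives $\varphi_i=A_{i,i-1}A^*_{i-1,i}$ and yields (ii). The main obstacle is this ordering argument; once it is in place, the remainder is a routine comparison of diagonal and off-diagonal entries.
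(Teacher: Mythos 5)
The paper does not prove this lemma; it is quoted from Terwilliger \cite[Theorem~17.1]{ter2005}, so there is no in-paper proof to compare your argument against. Judged on its own merits, your strategy is sound and your (ii) $\Rightarrow$ (i) direction is correct: the diagonal conjugation $D$ does bring $(A,A^*)$ into the form (\ref{e3.12}), and then Lemma~\ref{thm1.1} together with the natural map and the automorphism $X\mapsto D^{-1}XD$ produces the desired Leonard system on $\mathbb{K}^{d+1}$. The common-invariant-subspace argument showing the sub/super-diagonal entries are nonzero in (i) $\Rightarrow$ (ii) is also fine.

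The gap is in the ordering step of (i) $\Rightarrow$ (ii). Your iteration is phrased as if the new eigenspace position must neighbor the previous one, but since $e_1,e_2,\ldots$ are not $A^*$-eigenvectors, what the tridiagonal condition actually gives is this: $W_k:=\mathrm{span}(e_0,\ldots,e_k)$ is $A^*$-invariant, hence $W_k=\bigoplus_{i\in S_k}E_i^*V$ with $S_k$ an interval, and the new index $j_{k+1}$ is adjacent to the interval $S_k$, not necessarily to $j_k$. Thus a priori the sequence $j_0,j_1,\ldots$ could run down to $0$, then jump to the right end of the accumulated interval and continue upward — not a Hamiltonian path. To exclude this you need the further observation that $j_0\in\{0,d\}$. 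This follows by dimension count at the very first step: if $0<j_0<d$ then $E_{j_0-1}^*AE_{j_0}^*\neq 0$ and $E_{j_0+1}^*AE_{j_0}^*\neq 0$ force the $A^*$-invariant space $W_1=\mathrm{span}(e_0,Ae_0)$ to contain $E_{j_0-1}^*V$, $E_{j_0}^*V$ and $E_{j_0+1}^*V$, so $\dim W_1\geq 3$, contradicting $\dim W_1=2$. Once $j_0\in\{0,d\}$ is in hand, the monotone growth $W_{k+1}=W_k+AW_k$ adds exactly one adjacent eigenspace at each step, and your conclusion ($\theta_i^*=A_{ii}^*$ up to reversal, and symmetrically for $A$ via $e_d$) follows. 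In short: right plan, but the ordering step needs this extra base-case argument to make the Hamiltonian-path claim legitimate.
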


\section{The Askey-Wilson relations for a Leonard pair}

In this section we recall a few facts about Leonard pairs that will be used later in the paper.

\begin{lem}\label{thm1.6} {\rm (\cite[Theorem~1.5]{tervid}).}
Let $(A,A^*)$ denote a Leonard pair on $V.$ Then there exists a sequence of scalars $\beta,\gamma,\gamma^*,\varrho,\varrho^*,\omega,\eta,\eta^*$ taken from $\mathbb{K}$ such that both
\begin{align}
A^2A^*-\beta AA^*A+A^*A^2-\gamma(AA^*+A^*A)-\varrho \hspace{0.45mm} A^*&=\gamma^*A^2+\omega A+\eta\hspace{0.45mm} I,\label{e19}\\
A^{*2}A-\beta A^*AA^*+AA^{*2}-\gamma^*(A^*A+AA^*)-\varrho^* A&=\gamma A^{*2}+\omega A^*+\eta^* I. \label{e20}
\end{align}
The sequence is uniquely determined by the pair $(A,A^*)$ provided the dimension of
$V$ is at least $4.$
\end{lem}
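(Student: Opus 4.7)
The plan is to work in a $\Phi$-split basis as in (\ref{e3.13}), so that $A,A^*$ are represented by the lower/upper bidiagonal matrices in (\ref{e3.12}). Every product $A^rA^{*s}$ with $r+s\leq 3$ is then at worst tridiagonal, so both sides of (\ref{e19}) and (\ref{e20}) are tridiagonal; the argument reduces to choosing the eight scalars so that the superdiagonal, diagonal, and subdiagonal entries match.

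First I would extract $\beta$ from condition (PA5): when $d\geq 3$ the common ratio $(\theta_{i-2}-\theta_{i+1})/(\theta_{i-1}-\theta_i)$ equals $\beta+1$, and elementary manipulation shows that
\[
\theta_{i-1}-\beta\theta_i+\theta_{i+1}, \qquad \theta_{i-1}^{*}-\beta\theta_i^{*}+\theta_{i+1}^{*}
\]
are each independent of $i$; call these constants $\gamma$ and $\gamma^*$. Telescoping consecutive instances of the first identity shows that $\theta_{i-1}^2-\beta\theta_{i-1}\theta_i+\theta_i^2-\gamma(\theta_{i-1}+\theta_i)$ is independent of $i$, defining $\varrho$, and $\varrho^*$ is defined analogously. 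For $d\leq 2$, condition (PA5) is vacuous, so $\beta$ is a free parameter and the same formulae still yield admissible $\gamma,\gamma^*,\varrho,\varrho^*$; this is the source of non-uniqueness for small $d$.

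Next I would verify (\ref{e19}) by matching entries in the split basis. Using the bidiagonal form of $A$ and $A^*$, I write down the $(i-1,i)$, $(i,i)$, and $(i+1,i)$ entries of each triple product $A^2A^*$, $AA^*A$, $A^*A^2$ and of each lower-order term explicitly in terms of $\theta_j,\theta_j^*,\varphi_j$. The defining relations for $\gamma$ and $\varrho$ cause the superdiagonal contribution of the left-hand side to collapse to a scalar multiple of $\varphi_i$, and comparison with the right-hand side determines $\omega$ uniquely. The diagonal match then determines $\eta$, and the subdiagonal match holds automatically. The same procedure applied to (\ref{e20}), which is just (\ref{e19}) for the Leonard pair $(A^*,A)$ coming from $\Phi^*$ via Lemma~\ref{thm1.2}(i), produces $\eta^*$ and recovers the same $\omega$.

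The main obstacle is the bookkeeping in the superdiagonal match: after expansion one gets a combination of $\varphi_i$ and $\varphi_{i\pm 1}$ multiplied by various polynomial expressions in the $\theta_j,\theta_j^*$, and it is not immediate that this is a scalar multiple of $\varphi_i$. The key is to use (PA3) to rewrite $\varphi_{i\pm 1}$ in terms of $\varphi_i$ and differences of the $\theta_j,\theta_j^*$, and then invoke the two- and three-term recursions defining $\beta,\gamma,\gamma^*,\varrho,\varrho^*$ to cancel the remaining terms. For uniqueness when $d\geq 3$, the difference of two candidate octuples would yield a nonzero polynomial relation $P(A,A^*)=0$ of total degree at most $3$; reading off its $(i-1,i)$ and $(i,i)$ entries in the split basis for $i=1,\ldots,d$, together with the companion relations coming from (\ref{e20}), yields enough independent linear conditions on the eight scalars to force every coefficient of the difference to vanish.
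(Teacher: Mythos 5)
The paper does not prove this lemma itself; it is cited from Terwilliger--Vidunas. So I will assess your argument on its own terms. The overall plan (pass to a $\Phi$-split basis as in (\ref{e3.12}) and determine the eight scalars by matching entries) is sound and in the spirit of the reference, but several of the concrete steps are wrong, and one is a genuine gap.

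The claim ``every product $A^rA^{*s}$ with $r+s\leq 3$ is at worst tridiagonal'' is false. In the split basis $A$ is lower bidiagonal and $A^*$ is upper bidiagonal, so $A^2$ already has a nonzero $(i,i-2)$ entry, and each of $A^2A^*$, $AA^*A$, $A^*A^2$ occupies the four diagonals $j-i\in\{-2,-1,0,1\}$. Consequently the left side of (\ref{e19}) lives on four bands, not three, and the right side $\gamma^*A^2+\omega A+\eta I$ lives on the three bands $j-i\in\{-2,-1,0\}$. You must therefore match \emph{four} families of entries, and your proposal ignores the $(i,i-2)$ band entirely. That band is not a formality: it is precisely where $\gamma^*$ appears on the right, and the $(i,i-2)$ entry of the left side evaluates to $\theta_{i-2}^*-\beta\theta_{i-1}^*+\theta_i^*$. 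Dropping this band drops the determination (or verification) of $\gamma^*$ in (\ref{e19}) and of $\gamma$ in (\ref{e20}).

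The roles you assign to the remaining bands are also inverted. The $(i-1,i)$ entry of the left side works out to $\varphi_i\bigl(\theta_{i-1}^2-\beta\theta_{i-1}\theta_i+\theta_i^2-\gamma(\theta_{i-1}+\theta_i)-\varrho\bigr)$, with no $\varphi_{i\pm 1}$ at all, and the right side has no superdiagonal; so this match does not determine $\omega$, it merely forces the quantity in parentheses to be zero, i.e., the definition of $\varrho$. The scalar $\omega$ comes from the $(i,i-1)$ (subdiagonal) match, and the ``$\varphi_{i}$ versus $\varphi_{i\pm 1}$'' bookkeeping you flag as the main obstacle actually lives on the diagonal, not the superdiagonal. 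Finally, the uniqueness sketch for $d\geq 3$ only looks at the $(i-1,i)$ and $(i,i)$ entries; once you account for all four bands of (\ref{e19}) and all four bands of (\ref{e20}), the linear system in the eight scalars becomes transparent, but as written the count of independent constraints is not established.
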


We refer to (\ref{e19}), (\ref{e20}) as the {\it Askey-Wilson
relations.} Later in the paper we will encounter the Askey-Wilson
relations in another form, said to be $\mathbb{Z}_3$-symmetric.

\begin{lem}\label{lem1.20} {\rm (\cite[Theorem 4.5; Theorem 5.3]{tervid}).}
Let $(A;\{E_i\}^d_{i=0};A^*;\{E_i^*\}^d_{i=0})$ denote a
Leonard system over $\mathbb{K}$ with eigenvalue sequence
$\{\theta_i\}^d_{i=0}$ and dual eigenvalue sequence
$\{\theta_i^*\}^d_{i=0}.$ Let the scalars $a_i,$ $a_i^*$ be as in
Definition~\ref{defn1.18}. Let $\beta,\gamma,\gamma^*,\varrho,\varrho^*,\omega,\eta,\eta^*$
denote a sequence of scalars taken from $\mathbb{K}.$ This sequence satisfies {\rm (\ref{e19})}, {\rm (\ref{e20})} if and only if the following {\rm (i)}--{\rm (ix)} hold.
\begin{enumerate}
\item[{\rm (i)}]
The expressions
$$
\frac{\theta_{i-2}-\theta_{i+1}}{\theta_{i-1}-\theta_i},\qquad \frac{\theta_{i-2}^*-\theta_{i+1}^*}{\theta_{i-1}^*-\theta_i^*}
$$
are both equal to $\beta+1$ for $2\leq i\leq d-1.$

\item[{\rm (ii)}] $\gamma=\theta_{i-1}-\beta\theta_i+\theta_{i+1}$ \qquad \quad $(1\leq i\leq d-1).$

\item[{\rm (iii)}] $\gamma^*=\theta_{i-1}^*-\beta\theta_i^*+\theta_{i+1}^*$ \qquad \quad $(1\leq i\leq d-1).$

\item[{\rm (iv)}] $\varrho=\theta_{i-1}^2-\beta\theta_{i-1}\theta_i+\theta_i^2-\gamma(\theta_{i-1}+\theta_i)$ \qquad \quad $(1\leq i\leq d).$

\item[{\rm (v)}] $\varrho^*=\theta_{i-1}^{*2}-\beta\theta_{i-1}^*\theta_i^*+\theta_i^{*2}-\gamma^*(\theta_{i-1}^*+\theta_i^*)$ \qquad \quad $(1\leq i\leq d).$

\item[{\rm (vi)}] $\omega=a_i^*(\theta_i-\theta_{i+1})+a_{i-1}^*(\theta_{i-1}-\theta_{i-2})-\gamma^*(\theta_i+\theta_{i-1})$ \qquad \quad \hbox{$(1\leq i\leq d).$}

\item[{\rm (vii)}] $\omega=a_i(\theta_i^*-\theta_{i+1}^*)+a_{i-1}(\theta_{i-1}^*-\theta_{i-2}^*)-\gamma(\theta_i^*+\theta_{i-1}^*)$ \qquad \quad \hbox{$(1\leq i\leq d).$}

\item[{\rm (viii)}]
$\eta=a_i^*(\theta_i-\theta_{i-1})(\theta_i-\theta_{i+1})-\gamma^*\theta_i^2-\omega\theta_i$ \qquad \quad \hbox{$(0\leq i\leq d).$}

\item[{\rm (ix)}]
$\eta^*=a_i(\theta_i^*-\theta_{i-1}^*)(\theta_i^*-\theta_{i+1}^*)-\gamma\theta_i^{*2}-\omega\theta_i^*$ \qquad \quad \hbox{$(0\leq i\leq d).$}
\end{enumerate}
In the above lines {\rm (vi)--(ix)}, $\theta_{-1}$ and $\theta_{d+1}$ {\rm (}resp. $\theta^*_{-1}$ and $\theta^*_{d+1}${\rm )} denote scalars in $\mathbb{K}$ that satisfy {\rm (ii)} {\rm (}resp. {\rm (iii)}{\rm )} for $i=0$ and $i=d.$
\end{lem}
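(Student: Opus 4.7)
My plan is to prove the equivalence by entry-by-entry comparison of the Askey-Wilson relations~(\ref{e19}) and~(\ref{e20}) in two adapted bases. Pick a basis $\{v_i\}_{i=0}^d$ of $V$ with $v_i\in E_i^*V$: then $A^*$ is diagonal with entries $\theta_i^*$, and by Definition~\ref{defn1.2}(v) together with $\dim E_i^*V=1$ the matrix of $A$ is irreducible tridiagonal with diagonal entries $A_{ii}=a_i$ (Lemma~\ref{lem1.3}). Dually, pick $\{w_i\}_{i=0}^d$ with $w_i\in E_iV$: then $A=\mathrm{diag}(\theta_i)$ and $A^*$ is irreducible tridiagonal with diagonal entries $A^*_{ii}=a_i^*$.

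For~(\ref{e19}) in the first basis, both sides vanish at $(i,j)$ for $|i-j|>2$ because $A$ is tridiagonal. At $|i-j|=2$, the common nonzero factor $A_{i,i+1}A_{i+1,i+2}$ divides the equation to give condition~(iii). At $|i-j|=1$, one substitutes $(A^2)_{i,i+1}=A_{i,i+1}(a_i+a_{i+1})$ and $(AA^*A)_{i,i+1}=A_{i,i+1}(a_i\theta_i^*+a_{i+1}\theta_{i+1}^*)$, divides by $A_{i,i+1}\neq 0$, and applies~(iii) to rewrite the pure-$\theta^*$ pieces as telescoping differences; after reindexing $i\to i-1$ this yields condition~(vii). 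The diagonal $(i,i)$ entry, after further simplification with~(iii) and~(vii) and specialization to the boundary indices $i=0,d$ (where the values $\theta^*_{-1},\theta^*_{d+1}$ are fixed by requiring~(iii) to hold at $i=0,d$), produces condition~(ix). In the second basis, (\ref{e19}) takes the form $A^*_{ij}[\theta_i^2-\beta\theta_i\theta_j+\theta_j^2-\gamma(\theta_i+\theta_j)-\varrho]=(\gamma^*\theta_i^2+\omega\theta_i+\eta)\delta_{ij}$; off the diagonal, irreducibility of $A^*$ yields condition~(iv), from which~(ii) follows by differencing at consecutive~$i$; the diagonal case supplies conditions~(vi) and~(viii).

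Applying the identical analysis to~(\ref{e20}), which is obtained from~(\ref{e19}) by the involution $A\leftrightarrow A^*$, $\theta_i\leftrightarrow\theta_i^*$, $\gamma\leftrightarrow\gamma^*$, $\varrho\leftrightarrow\varrho^*$, $\eta\leftrightarrow\eta^*$, $a_i\leftrightarrow a_i^*$, supplies the remaining condition~(v). Condition~(i) is then purely algebraic: differencing~(ii) at consecutive indices yields $\theta_{i-2}-\theta_{i+1}=(\beta+1)(\theta_{i-1}-\theta_i)$ for $2\le i\le d-1$, with the parallel manipulation of~(iii) handling the starred ratio, so both ratios equal $\beta+1$ independently of~$i$. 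Each of the entrywise derivations above is fully reversible, so conversely, granted~(i)--(ix), every matrix entry of both sides of~(\ref{e19}) and~(\ref{e20}) matches, establishing the operator identities.

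The principal obstacle is the diagonal case in each basis, where the unknowns $\varrho,\omega,\eta$ (and their starred counterparts) appear simultaneously alongside the boundary extensions $\theta_{-1},\theta_{d+1},\theta^*_{-1},\theta^*_{d+1}$, which are only implicitly defined by requiring~(ii),(iii) to hold at $i=0,d$. Specializing to the boundary indices $i=0$ and $i=d$ is necessary to isolate conditions~(viii) and~(ix), and one must verify that the implicit extensions of the eigenvalue sequences are consistent between the two bases and the two Askey-Wilson relations.
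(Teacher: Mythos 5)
The paper does not prove Lemma~\ref{lem1.20}; it is cited from Terwilliger--Vidunas, so there is no in-paper proof to compare against. Your strategy — entry-by-entry comparison of both sides of (\ref{e19}), (\ref{e20}) in the two adapted eigenbases — is the standard route and is essentially the right idea, but the bookkeeping contains a genuine error that breaks both directions of the argument.

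The problem is your analysis of the diagonal entry of (\ref{e19}) in the first basis (the one where $A^*$ is diagonal and $A$ is tridiagonal). You claim it yields condition~(ix). It cannot, for two separate reasons. First, the $(i,i)$ entry of the right side of (\ref{e19}) is $\gamma^*(A^2)_{ii}+\omega a_i+\eta$, which involves $\eta$, whereas (ix) involves $\eta^*$; the scalar $\eta^*$ appears only in (\ref{e20}), so no entry of (\ref{e19}) alone can produce an equation containing $\eta^*$. Second, and more structurally, in this basis $(A^2)_{ii}=a_i^2+b_{i-1}+b_i$ and $(AA^*A)_{ii}=b_{i-1}\theta_{i-1}^*+a_i^2\theta_i^*+b_i\theta_{i+1}^*$, where $b_i:=A_{i,i+1}A_{i+1,i}$. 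The resulting diagonal equation therefore involves the products $b_i$, which are not among the quantities $a_i,a_i^*,\theta_i,\theta_i^*,\gamma,\gamma^*,\varrho,\varrho^*,\omega,\eta,\eta^*$ appearing in (i)--(ix), and no amount of simplification using (iii),(vii) will eliminate them. Consequently your claim that ``each of the entrywise derivations above is fully reversible'' does not hold for this entry: granted (i)--(ix), you have no way of verifying this particular scalar identity without independently knowing the~$b_i$.

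The repair is to be more parsimonious about which relation you analyze in which basis. Since an operator identity holds if and only if it holds entrywise in a single basis, it suffices to check (\ref{e19}) only in the $A$-eigenbasis (where $A=\mathrm{diag}(\theta_i)$ and $A^*$ is irreducible tridiagonal with $A^*_{ii}=a_i^*$), and to check (\ref{e20}) only in the $A^*$-eigenbasis. In the $A$-eigenbasis, (\ref{e19}) reads $A^*_{ij}\bigl[\theta_i^2-\beta\theta_i\theta_j+\theta_j^2-\gamma(\theta_i+\theta_j)-\varrho\bigr]=(\gamma^*\theta_i^2+\omega\theta_i+\eta)\delta_{ij}$; the $|i-j|=1$ entries give exactly (iv), whose consecutive differences give (ii); the diagonal entries give $a_i^*\bigl[(2-\beta)\theta_i^2-2\gamma\theta_i-\varrho\bigr]=\gamma^*\theta_i^2+\omega\theta_i+\eta$, which — after rewriting $\varrho$ via (ii),(iv) as $\varrho=\theta_i^2-\gamma\theta_i-\theta_{i-1}\theta_{i+1}$ — becomes precisely (viii), and (vi) follows by subtracting (viii) at consecutive $i$ and dividing by $\theta_i-\theta_{i-1}\neq0$. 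All entries here involve only $\theta_i$ and $a_i^*$, so no $b_i$ appear and every step reverses. The symmetric analysis of (\ref{e20}) in the $A^*$-eigenbasis yields (v), then (iii), then (ix), then (vii). Finally (i) follows algebraically from (ii) and (iii). This fills the gap.

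As a minor point, some care is needed at the boundary indices: the diagonal equation holds for $0\leq i\leq d$, and rewriting it via (ii) at $i=0,d$ requires introducing $\theta_{-1},\theta_{d+1}$ precisely as in the lemma's last sentence; you flagged this issue correctly, but it only becomes a clean step once you work in the right basis where the off-diagonal products do not enter.
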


\section{Leonard systems of QRacah type; preliminaries}
A bit later in the paper we will consider a family of Leonard systems said to have QRacah type. For these Leonard systems the eigenvalue sequence and dual eigenvalue sequence have a certain form. In this section we consider the form. For the rest of this section let $a,q$ denote nonzero scalars in $\mathbb{K}$ with $q^2\not=\pm 1,$ and let
\begin{align}\label{e1}
\theta_i=aq^{2i-d}+a^{-1}q^{d-2i} \qquad \qquad \hbox{$(0\leq
i\leq d)$.}
\end{align}

We first discuss some necessary and sufficient conditions for
$\{\theta_i\}_{i=0}^d$ to be mutually distinct.

\begin{lem}\label{lem1.2}
We have
$$
\theta_i-\theta_j=(q^{i-j}-q^{j-i})(aq^{i+j-d}-a^{-1}q^{d-i-j}) \qquad \qquad \hbox{$(0\leq i,j\leq d)$}.
$$
\end{lem}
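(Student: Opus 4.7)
The plan is to prove this identity by direct substitution and factorization, as there is no structural content beyond elementary algebra in $\mathbb{K}$.

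First I would substitute the defining formula $\theta_i = aq^{2i-d} + a^{-1}q^{d-2i}$ (and the analogous one for $\theta_j$) into the difference $\theta_i - \theta_j$ and split into the $a$-part and the $a^{-1}$-part:
\[
\theta_i - \theta_j = a\bigl(q^{2i-d} - q^{2j-d}\bigr) + a^{-1}\bigl(q^{d-2i} - q^{d-2j}\bigr).
\]

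The key algebraic observation is to rewrite each exponent as a sum of $i-j$ and $i+j-d$ (up to sign): namely $2i-d = (i-j)+(i+j-d)$, $2j-d = -(i-j)+(i+j-d)$, $d-2i = -(i-j)-(i+j-d)$, and $d-2j = (i-j)-(i+j-d)$. Factoring $q^{i+j-d}$ out of the first bracket and $q^{-(i+j-d)}$ out of the second, both brackets become $q^{i-j} - q^{j-i}$. I would carry this out in a short display:
\[
\theta_i - \theta_j = a\,q^{i+j-d}\bigl(q^{i-j}-q^{j-i}\bigr) + a^{-1}q^{d-i-j}\bigl(q^{j-i}-q^{i-j}\bigr).
\]
Pulling out the common factor $q^{i-j} - q^{j-i}$ yields the claimed identity.

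There is essentially no obstacle here; the only thing to be careful about is the sign in the second term (the $a^{-1}$-part produces $q^{j-i}-q^{i-j}$, which is the negative of the first bracket, so its coefficient combines with $-a^{-1}q^{d-i-j}$ to give the factor $aq^{i+j-d}-a^{-1}q^{d-i-j}$). No use of the hypotheses $a\ne 0$, $q\ne 0$, or $q^2\ne\pm 1$ is needed for the identity itself; those hypotheses will be relevant in the subsequent lemma that deduces when the $\theta_i$ are mutually distinct.
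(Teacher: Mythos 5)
Your proof is correct and coincides with the paper's approach: the paper simply says ``Verify this using (\ref{e1}),'' which is exactly the direct substitution and factorization you carried out. The algebra is right, and your remark that the nonvanishing/nondegeneracy hypotheses play no role in this identity (only in the subsequent distinctness lemma) is accurate.
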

\begin{proof}
Verify this using (\ref{e1}).
\end{proof}

\begin{lem}\label{lem1.6}
The scalars $\{\theta_i\}^d_{i=0}$ are mutually distinct if and only if the following {\rm (i)}, {\rm (ii)} hold.
\begin{enumerate}
\item[{\rm (i)}] $q^{2i}\not=1$ for $1\leq i\leq d.$

\item[{\rm (ii)}] $a^2\not=q^{2d-2i}$ for $1\leq i\leq 2d-1.$
\end{enumerate}
\end{lem}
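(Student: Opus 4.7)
The plan is to apply Lemma~\ref{lem1.2}, which factors $\theta_i-\theta_j$ as a product of two terms, and then convert ``$\theta_i\neq\theta_j$ for all $i\neq j$'' into separate nonvanishing conditions on each factor. The only real work is bookkeeping: tracking how the exponents $i-j$ and $i+j$ range as $(i,j)$ varies over pairs with $0\leq i<j\leq d$, and checking that the resulting index sets match those in (i), (ii).

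By symmetry it suffices to consider $0\leq i<j\leq d$. From Lemma~\ref{lem1.2},
$$\theta_i-\theta_j=(q^{i-j}-q^{j-i})\bigl(aq^{i+j-d}-a^{-1}q^{d-i-j}\bigr),$$
so $\theta_i=\theta_j$ exactly when one of the two factors is zero. The first factor vanishes iff $q^{2(j-i)}=1$, and the second vanishes iff $a^2=q^{2(d-i-j)}$ (after multiplying through by $aq^{d-i-j}$, which is nonzero since $a,q\neq 0$).

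Next I would observe that as $(i,j)$ ranges over pairs with $0\leq i<j\leq d$, the difference $j-i$ takes exactly the values $1,2,\ldots,d$, while the sum $i+j$ takes exactly the values $1,2,\ldots,2d-1$ (for any $1\leq m\leq 2d-1$, take $i=\max(0,m-d)$ and $j=m-i$). Hence the nonvanishing of the first factor for all such $(i,j)$ is equivalent to $q^{2k}\neq 1$ for $1\leq k\leq d$, which is (i); and the nonvanishing of the second factor is equivalent to $a^2\neq q^{2(d-m)}$ for $1\leq m\leq 2d-1$, which after reindexing $m\mapsto i$ is precisely (ii).

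I do not anticipate any real obstacle. The factorization handed to us by Lemma~\ref{lem1.2} does all the algebraic work; the proof is essentially just a careful unpacking of index sets, together with the harmless remark that $aq^{d-i-j}\neq 0$ so we may clear denominators when rewriting the second factor.
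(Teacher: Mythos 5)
Your proof is correct and follows exactly the route the paper intends: the paper's own proof is the one-liner ``Verify this by using Lemma~\ref{lem1.2},'' and your write-up is a careful unpacking of that verification, including the index-set bookkeeping and the clearing of denominators.
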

\begin{proof}
Verify this by using Lemma~\ref{lem1.2}.
\end{proof}

Motivated by Lemma~\ref{lem1.20} we now consider some recursions
satisfied by the sequence (\ref{e1}).

\begin{lem}\label{lem1.5}
Assume $\{\theta_i\}^d_{i=0}$ are mutually distinct. Then
\begin{align*}
\frac{\theta_{i-2}-\theta_{i+1}}{\theta_{i-1}-\theta_i}=q^2+1+q^{-2}\qquad \qquad \hbox{$(2\leq i\leq d-1)$.}
\end{align*}
\end{lem}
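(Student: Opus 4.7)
The plan is to use Lemma~\ref{lem1.2} to expand both the numerator $\theta_{i-2}-\theta_{i+1}$ and the denominator $\theta_{i-1}-\theta_i$ as products of two factors, observe that one factor cancels, and then simplify the remaining ratio of $q$-powers.

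Concretely, Lemma~\ref{lem1.2} gives
\begin{align*}
\theta_{i-2}-\theta_{i+1}&=(q^{-3}-q^{3})(aq^{2i-1-d}-a^{-1}q^{d-2i+1}),\\
\theta_{i-1}-\theta_i&=(q^{-1}-q)(aq^{2i-1-d}-a^{-1}q^{d-2i+1}).
\end{align*}
The key observation is that the second factor $aq^{2i-1-d}-a^{-1}q^{d-2i+1}$ (which comes from the sum $(i-2)+(i+1)=(i-1)+i=2i-1$) is identical in both expressions, and is nonzero for $2\le i\le d-1$ because $\{\theta_j\}_{j=0}^d$ are mutually distinct by hypothesis. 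Therefore the ratio reduces to $(q^{-3}-q^{3})/(q^{-1}-q)$.

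To finish, I would factor this ratio using the identity $x^3-y^3=(x-y)(x^2+xy+y^2)$ with $x=q^{-1}$ and $y=q$, obtaining $q^{-2}+1+q^{2}$, as required. Since $q^2\neq\pm 1$, the denominator $q^{-1}-q$ is nonzero, so the division is valid.

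No step here is really an obstacle; the only thing to watch is the bookkeeping of the exponents in Lemma~\ref{lem1.2} and the check that the common factor is nonzero, which is immediate from the mutual distinctness assumption (equivalently, from Lemma~\ref{lem1.6}).
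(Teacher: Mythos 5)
Your proof is correct and follows exactly the route the paper intends: it expands both numerator and denominator via Lemma~\ref{lem1.2}, cancels the common factor $aq^{2i-1-d}-a^{-1}q^{d-2i+1}$ (nonzero by the distinctness hypothesis), and simplifies $(q^3-q^{-3})/(q-q^{-1})$ to $q^2+1+q^{-2}$. The paper's proof is just a one-line instruction to do precisely this, so you have simply filled in the details.
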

\begin{proof}
In the above fraction, evaluate the numerator and denominator
using Lemma~\ref{lem1.2}.
\end{proof}

\begin{lem}\label{lem2.1}
We have
\begin{align*}
\theta_{i-1}-(q^2+q^{-2})\theta_{i}+\theta_{i+1}=0 \qquad \qquad
\hbox{$(1\leq i\leq d-1)$.}
\end{align*}
\end{lem}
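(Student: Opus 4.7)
The plan is to prove the identity by direct substitution of the explicit formula $\theta_i = aq^{2i-d} + a^{-1}q^{d-2i}$ from (\ref{e1}). Note that this statement does not require $\{\theta_i\}_{i=0}^d$ to be mutually distinct, so no appeal to Lemma~\ref{lem1.6} is needed; one can simply expand each term.

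First I would compute $\theta_{i-1}$ and $\theta_{i+1}$ using (\ref{e1}):
\begin{align*}
\theta_{i-1} &= aq^{2i-d}\cdot q^{-2} + a^{-1}q^{d-2i}\cdot q^{2},\\
\theta_{i+1} &= aq^{2i-d}\cdot q^{2} + a^{-1}q^{d-2i}\cdot q^{-2}.
\end{align*}
Adding these and factoring out $aq^{2i-d}$ and $a^{-1}q^{d-2i}$ respectively yields
\[
\theta_{i-1} + \theta_{i+1} = (q^2+q^{-2})(aq^{2i-d} + a^{-1}q^{d-2i}) = (q^2+q^{-2})\theta_i,
\]
which rearranges to the desired identity.

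There is no real obstacle here; the only thing to be careful about is ensuring the identity is stated for the correct range $1 \leq i \leq d-1$, which is precisely the range in which both $\theta_{i-1}$ and $\theta_{i+1}$ are already defined by (\ref{e1}). One could alternatively derive the identity from Lemma~\ref{lem1.5} when $d \geq 4$ (using the formula for $(\theta_{i-2}-\theta_{i+1})/(\theta_{i-1}-\theta_i)$), but that route is circuitous and does not cover small $d$, so the direct substitution is preferable.
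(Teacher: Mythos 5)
Your proof is correct and uses the same approach as the paper, namely direct substitution of the explicit formula (\ref{e1}) for $\theta_i$ followed by elementary algebra. The paper's proof simply says ``Verify this using (\ref{e1})'', and your computation fills in exactly that verification.
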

\begin{proof}
Verify this using (\ref{e1}).
\end{proof}

\begin{lem}\label{lem2.2}
We have
\begin{align}\label{e1.8}
\theta_{i-1}^2-(q^2+q^{-2})\theta_{i-1}\theta_i+\theta_i^2=-(q^2-q^{-2})^2 \qquad \qquad \hbox{$(1\leq i\leq d).$}
\end{align}
\end{lem}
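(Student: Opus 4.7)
The plan is a direct computation from the definition (\ref{e1}). Since the right-hand side of (\ref{e1.8}) depends on neither $i$ nor $a$, the key is to parametrize so that the $i$- and $a$-dependence cancels transparently. I would introduce the midpoint abbreviation $u=aq^{2i-1-d}$, which splits the gap between consecutive $\theta$'s symmetrically: then $\theta_{i-1}=uq^{-1}+u^{-1}q$ and $\theta_i=uq+u^{-1}q^{-1}$.

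With this substitution the three building blocks unfold to
\begin{align*}
\theta_{i-1}^2&=u^2q^{-2}+2+u^{-2}q^2,\\
\theta_i^2&=u^2q^2+2+u^{-2}q^{-2},\\
\theta_{i-1}\theta_i&=(u^2+u^{-2})+(q^2+q^{-2}).
\end{align*}
Adding the first two gives $\theta_{i-1}^2+\theta_i^2=(u^2+u^{-2})(q^2+q^{-2})+4$. Subtracting $(q^2+q^{-2})\theta_{i-1}\theta_i=(u^2+u^{-2})(q^2+q^{-2})+(q^2+q^{-2})^2$, the $(u^2+u^{-2})$-terms cancel exactly, leaving $4-(q^2+q^{-2})^2=-(q^2-q^{-2})^2$, as required. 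Note that the dependence on $a$, $i$, $d$ enters only through $u$, so its vanishing from the final answer is the content of the lemma.

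An alternative route uses Lemma~\ref{lem1.2} at $j=i-1$ to get $\theta_i-\theta_{i-1}=(q-q^{-1})(aq^{2i-1-d}-a^{-1}q^{d-2i+1})$, combined with the algebraic identity
\[
\theta_{i-1}^2-(q^2+q^{-2})\theta_{i-1}\theta_i+\theta_i^2=(\theta_i-\theta_{i-1})^2-(q-q^{-1})^2\theta_{i-1}\theta_i,
\]
after which one only needs to expand $\theta_{i-1}\theta_i$ once. Either way the argument is pure symbolic manipulation, and there is no real obstacle; the only subtlety is choosing variables so that the spurious $i$- and $a$-dependence visibly disappears, which the substitution $u=aq^{2i-1-d}$ accomplishes.
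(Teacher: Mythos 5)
Your computation is correct, and it takes a genuinely different route from the paper. The paper's proof factorizes the quadratic form as
\[
\theta_{i-1}^2-(q^2+q^{-2})\theta_{i-1}\theta_i+\theta_i^2=(\theta_{i-1}-q^2\theta_i)(\theta_{i-1}-q^{-2}\theta_i),
\]
then uses (\ref{e1}) to show that the two linear factors are $-aq^{2i-d}(q^2-q^{-2})$ and $a^{-1}q^{d-2i}(q^2-q^{-2})$, whose product is $-(q^2-q^{-2})^2$; the $a$- and $i$-dependence cancels because the two coefficients are mutually inverse up to sign. Your main proof instead reparametrizes symmetrically about the midpoint via $u=aq^{2i-1-d}$, so that $\theta_{i-1}=uq^{-1}+u^{-1}q$ and $\theta_i=uq+u^{-1}q^{-1}$, and the expansion makes the cancellation of the $(u^2+u^{-2})$ terms completely transparent. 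Both are elementary direct computations of comparable length; the paper's factorization is arguably slicker and reuses the recursion structure implicit in Lemma~\ref{lem2.1} (the factor $x^2-(q^2+q^{-2})xy+y^2$ splits because $q^2+q^{-2}$ is the recursion coefficient), while your substitution is more mechanical and makes clear at a glance why the answer is independent of $a$, $i$, and $d$. Your sketched alternative, reducing to $(\theta_i-\theta_{i-1})^2-(q-q^{-1})^2\theta_{i-1}\theta_i$ and invoking Lemma~\ref{lem1.2}, is closer in spirit to the paper but still distinct from its two-factor decomposition.
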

\begin{proof}
The left-hand side of (\ref{e1.8}) can be factorized into
\begin{align}\label{e1.13}
(\theta_{i-1}-q^2\theta_i)(\theta_{i-1}-q^{-2}\theta_i).
\end{align}
By (\ref{e1}) we find $\theta_{i-1}-q^2\theta_i$ equals $-aq^{2i-d}(q^2-q^{-2})$ and $\theta_{i-1}-q^{-2}\theta_i$ equals $a^{-1}q^{d-2i}(q^2-q^{-2}).$ By these comments (\ref{e1.13}) equals the right-hand side of (\ref{e1.8}).
\end{proof}

In Lemma~\ref{thm1.1} the conditions (PA3), (PA4) involve a
certain sum. We now evaluate this sum for the case (\ref{e1}).

\begin{lem}\label{lem1.4}
We have
\begin{align}\label{e1.12}
\sum^{i-1}_{h=0}\frac{\theta_h-\theta_{d-h}}{\theta_0-\theta_d}=\frac{(q^i-q^{-i})(q^{d-i+1}-q^{i-d-1})}{(q-q^{-1})(q^d-q^{-d})} \qquad \qquad \hbox{$(1\leq i\leq d),$}
\end{align}
provided $\theta_0\not=\theta_d.$
\end{lem}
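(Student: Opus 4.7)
The plan is to evaluate the sum directly by first simplifying the summand and then recognizing two geometric series. The calculation is elementary but requires careful bookkeeping of the $q$-powers; there is no deep obstacle, merely the clean handling of the exponents.

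First I would apply Lemma~\ref{lem1.2} with $i=h$ and $j=d-h$ to obtain
\begin{align*}
\theta_h - \theta_{d-h} = (q^{2h-d} - q^{d-2h})(a - a^{-1}),
\end{align*}
and similarly (taking $h=0$) $\theta_0-\theta_d = (q^{-d}-q^d)(a-a^{-1})$. Since $\theta_0\not=\theta_d$ we have $a\not=a^{-1}$, and the factor $a-a^{-1}$ cancels in the ratio, yielding
\begin{align*}
\frac{\theta_h - \theta_{d-h}}{\theta_0 - \theta_d} = \frac{q^{d-2h} - q^{2h-d}}{q^d - q^{-d}}.
\end{align*}

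Next I would split the resulting sum as
\begin{align*}
\sum_{h=0}^{i-1}\frac{q^{d-2h} - q^{2h-d}}{q^d - q^{-d}} = \frac{1}{q^d - q^{-d}}\left(\sum_{h=0}^{i-1}q^{d-2h} - \sum_{h=0}^{i-1}q^{2h-d}\right).
\end{align*}
Each of the inner sums is a geometric series. The hypothesis $q^2\neq 1$ ensures the common ratios $q^{\pm 2}$ are not $1$, so the standard geometric-series formula applies, giving
\begin{align*}
\sum_{h=0}^{i-1}q^{d-2h} = \frac{q^{d-i+1}(q^i - q^{-i})}{q-q^{-1}}, \qquad \sum_{h=0}^{i-1}q^{2h-d} = \frac{q^{i-d-1}(q^i - q^{-i})}{q-q^{-1}},
\end{align*}
after clearing denominators in the standard way (multiply numerator and denominator of $(1-q^{-2i})/(1-q^{-2})$ by a suitable power of $q$ to balance the exponents).

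Finally, subtracting the two evaluated sums factors out $(q^i-q^{-i})/(q-q^{-1})$ and produces the remaining factor $q^{d-i+1}-q^{i-d-1}$, which together with the overall prefactor $1/(q^d-q^{-d})$ assembles into the right-hand side of (\ref{e1.12}).
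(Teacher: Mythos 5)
Your proposal is correct and follows exactly the same route as the paper: apply Lemma~\ref{lem1.2} to simplify the summand to $(q^{d-2h}-q^{2h-d})/(q^d-q^{-d})$ and then sum the two geometric series $\{q^{d-2h}\}$ and $\{q^{2h-d}\}$. The only difference is that you spell out the cancellation of the $a-a^{-1}$ factor and the geometric-series bookkeeping explicitly, which the paper leaves to the reader.
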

\begin{proof}
By Lemma~\ref{lem1.2} the summand in the left-hand side of (\ref{e1.12}) equals
$$
\frac{q^{d-2h}-q^{2h-d}}{q^{d}-q^{-d}}.
$$
Therefore the left-hand side of (\ref{e1.12}) involves two geometric series $\{q^{d-2h}\}^{i-1}_{h=0}$ and $\{q^{2h-d}\}^{i-1}_{h=0}.$ We sum the two series to obtain (\ref{e1.12}).
\end{proof}

We finish this section with two miscellaneous results that we will need later.

\begin{lem}\label{lem1.14}
We have
$$
\theta_{d-i}=aq^{d-2i}+a^{-1}q^{2i-d} \qquad \qquad \hbox{$(0\leq i\leq d).$}
$$
\end{lem}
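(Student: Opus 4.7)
The plan is to apply the definition (\ref{e1}) directly with the index $i$ replaced by $d-i$. Since the hypothesis $0 \leq i \leq d$ is equivalent to $0 \leq d-i \leq d$, the substitution is legitimate, and (\ref{e1}) gives
\[
\theta_{d-i} = aq^{2(d-i)-d} + a^{-1}q^{d-2(d-i)}.
\]
Simplifying the exponents yields $2(d-i)-d = d-2i$ and $d - 2(d-i) = 2i-d$, which is the desired identity. There is no real obstacle here; the lemma is a purely notational rewriting of (\ref{e1}) that is recorded separately only because the form $aq^{d-2i} + a^{-1}q^{2i-d}$ will be convenient later (for instance, when analyzing the $\downarrow$ and $\Downarrow$ actions on eigenvalue sequences in conjunction with Lemma~\ref{thm1.2}).
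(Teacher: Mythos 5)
Your proof is correct and matches the paper's approach exactly; the paper's own proof is the one-line ``Immediate from (\ref{e1}),'' and you have simply spelled out the substitution $i \mapsto d-i$ and the exponent simplification.
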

\begin{proof}
Immediate from (\ref{e1}).
\end{proof}

\begin{lem}\label{lem1.7}
Assume $d\geq 1.$ Then
\begin{align}\label{e1.11}
a=\frac{q^{d}\theta_1-q^{d-2}\theta_0}{q^2-q^{-2}}.
\end{align}
\end{lem}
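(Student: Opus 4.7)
The plan is a direct verification. I would substitute the closed form (\ref{e1}) for $\theta_0$ and $\theta_1$ into the numerator on the right-hand side of (\ref{e1.11}) and simplify.

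Explicitly, I would compute
\begin{align*}
q^d\theta_1-q^{d-2}\theta_0 &= q^d(aq^{2-d}+a^{-1}q^{d-2})-q^{d-2}(aq^{-d}+a^{-1}q^d)\\
&= aq^2+a^{-1}q^{2d-2}-aq^{-2}-a^{-1}q^{2d-2}\\
&= a(q^2-q^{-2}),
\end{align*}
where the $a^{-1}$ terms cancel. Since $q^2\neq \pm 1$ we have $q^2-q^{-2}\neq 0$, so dividing both sides by $q^2-q^{-2}$ yields the asserted formula.

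There is no real obstacle here; the identity is just an algebraic rearrangement of the definitions of $\theta_0$ and $\theta_1$. The only subtlety worth mentioning is the use of $q^2\neq\pm 1$ (guaranteed by the running assumption at the start of the section) to justify dividing by $q^2-q^{-2}$.
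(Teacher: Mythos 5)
Your verification is correct, and it is essentially the same computation the paper has in mind: the paper writes out $\theta_0$ and $\theta_1$ from (\ref{e1}) and says "solving these equations for $a$," which amounts to exactly the linear combination $q^d\theta_1 - q^{d-2}\theta_0$ you computed. The remark about $q^2 \neq \pm 1$ ensuring $q^2 - q^{-2} \neq 0$ is appropriate and implicit in the paper's setup.
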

\begin{proof}
From (\ref{e1}) we obtain $\theta_0=aq^{-d}+a^{-1}q^d$ and  $\theta_1=aq^{2-d}+a^{-1}q^{d-2}.$ Solving these equations for $a$ we routinely obtain (\ref{e1.11}).
\end{proof}

\section{Leonard systems of QRacah type}

In this section we define a family of Leonard systems said to have
QRacah type. We discuss some related concepts.

\begin{defn}\label{defn1.15}
Let $\Phi$ denote a
Leonard system on $V,$ as in Definition~\ref{nota1.1}. We say that $\Phi$ has {\it QRacah type}
whenever both (i) $d\geq 3;$ (ii) there exist nonzero $a,b,q\in
\mathbb{K}$ such that $q^2\not=\pm 1$ and
\begin{align}
\theta_i&=aq^{2i-d}+a^{-1}q^{d-2i}\qquad \qquad \hbox{$(0\leq i\leq d)$,}\label{e7}\\
\theta_i^*&=bq^{2i-d}+b^{-1}q^{d-2i} \qquad \qquad \hbox{$(0\leq
i\leq d)$.} \label{e8}
\end{align}
\end{defn}

In view of Definition~\ref{defn1.15}, until further notice we
assume $d\geq 3.$

\begin{defn}\label{defn1.23}
Let $QRAC=QRAC(d,\mathbb{K})$ denote the subset of $LS$ consisting of the isomorphism classes of Leonard systems that have QRacah type.
\end{defn}

Recall the $D_4$ action on the set $LS,$ from below
Definition~\ref{defn1.20}.

\begin{lem}\label{lem2.7}
The set $QRAC$ is closed under the action of $D_4$ on $LS.$
\end{lem}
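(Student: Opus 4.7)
The plan is to verify closure under each of the three generators $*$, $\downarrow$, $\Downarrow$ of $D_4$, using the explicit description of the induced action on parameter arrays given in Lemma~\ref{thm1.2}, together with Lemma~\ref{lem1.14}. Suppose $\Phi \in QRAC$ with parameters $a,b,q$ as in Definition~\ref{defn1.15}. The diameter condition $d \geq 3$ and the condition $q^{2} \neq \pm 1$ depend only on $d$ and on the scalar $q$, neither of which changes under the $D_4$ action; so at each step it suffices to exhibit nonzero $a',b' \in \mathbb{K}$ such that the new eigenvalue and dual eigenvalue sequences take the QRacah form with parameters $a', b', q$.

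For the action of $*$: by Lemma~\ref{thm1.2}(i) the eigenvalue sequence of $\Phi^{*}$ is $\{\theta_{i}^{*}\}_{i=0}^{d}$ and the dual eigenvalue sequence is $\{\theta_{i}\}_{i=0}^{d}$. Taking $a' = b$ and $b' = a$ shows $\Phi^{*} \in QRAC$.

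For the action of $\downarrow$: by Lemma~\ref{thm1.2}(ii) the eigenvalue sequence of $\Phi^{\downarrow}$ is unchanged, while the dual eigenvalue sequence becomes $\{\theta_{d-i}^{*}\}_{i=0}^{d}$. Applying Lemma~\ref{lem1.14} to $\{\theta_{i}^{*}\}$ gives
\begin{equation*}
\theta_{d-i}^{*} = b q^{d-2i} + b^{-1} q^{2i-d} = b^{-1} q^{2i-d} + (b^{-1})^{-1} q^{d-2i},
\end{equation*}
so taking $a' = a$ and $b' = b^{-1}$ shows $\Phi^{\downarrow} \in QRAC$. The argument for $\Downarrow$ is symmetric: by Lemma~\ref{thm1.2}(iii) the dual eigenvalue sequence of $\Phi^{\Downarrow}$ is unchanged, and the eigenvalue sequence becomes $\{\theta_{d-i}\}_{i=0}^{d}$, which by Lemma~\ref{lem1.14} has the QRacah form with parameters $a' = a^{-1}$ and $b' = b$.

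Since the three generators preserve $QRAC$, the full $D_4$ action does as well. There is no real obstacle here; the argument is a direct verification once the generators of $D_4$ are matched with the corresponding changes of parameter array, and the only small observation needed is that inverting $a$ or $b$ produces another admissible QRacah parameter since $a^{-1}$ and $b^{-1}$ are again nonzero elements of $\mathbb{K}$.
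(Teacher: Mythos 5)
Your proof is correct and takes essentially the same approach as the paper, which simply cites Lemma~\ref{thm1.2} and Lemma~\ref{lem1.14}; you have spelled out the routine verification for each generator $*,\downarrow,\Downarrow$ that the paper leaves implicit.
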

\begin{proof}
Immediate from Lemma~\ref{thm1.2} and Lemma~\ref{lem1.14}.
\end{proof}

Let $(A,A^*)$ denote a Leonard pair on $V.$ By Definition~\ref{defn2.4} and Lemma~\ref{lem2.7},
if some associated Leonard system has QRacah type then every associated Leonard system has QRacah type; in this case $(A,A^*)$ is said to have {\it QRacah type.}

\section{The parameter arrays of QRacah type}

Let $\Phi$ denote a Leonard system over $\mathbb{K}$ that has QRacah type. In this section we give an explicit form for the parameter array of $\Phi.$

\begin{defn}\label{defn1.9}
Let $(\{\theta_i\}^d_{i=0},\{\theta_i^*\}^d_{i=0},
\{\varphi_i\}^d_{i=1}, \{\phi_i\}^d_{i=1})$ denote a parameter
array over $\mathbb{K}.$ This parameter array is said to have {\it
QRacah type} whenever the corresponding Leonard system has QRacah
type.
\end{defn}

\begin{defn}\label{defn1.24}
Let $PA$-$QRAC=PA$-$QRAC(d,\mathbb{K})$ denote the set consisting
 of the parameter arrays in $PA$ that have QRacah
type.
\end{defn}

Below Definition~\ref{defn1.22} we gave a bijection from $LS$ to
$PA.$ The restriction of that bijection to $QRAC$ forms a
bijection from $QRAC$ to $PA$-$QRAC.$

\begin{lem}\label{lem1.9}
Let
$(\{\theta_i\}^d_{i=0},\{\theta_i^*\}^d_{i=0},\{\varphi_i\}^d_{i=1},\{\phi_i\}^d_{i=1})$
denote a parameter array over $\mathbb{K}$ that has QRacah type.
Let $a,b,q$ denote nonzero scalars in $\mathbb{K}$ such that
$q^2\not=\pm 1$ and {\rm (\ref{e7})}, {\rm (\ref{e8})} hold. Then for all $c\in\mathbb{K}$ the following {\rm (i),} {\rm (ii)} are equivalent.
\begin{enumerate}
\item[{\rm (i)}] $c$ is nonzero and satisfies
\begin{align}
\varphi_i&=a^{-1}b^{-1}q^{d+1}(q^i-q^{-i})(q^{i-d-1}-q^{d-i+1})(q^{-i}-abcq^{i-d-1})(q^{-i}-abc^{-1}q^{i-d-1}),\label{e9}\\
\phi_i&=ab^{-1}q^{d+1}(q^i-q^{-i})(q^{i-d-1}-q^{d-i+1})(q^{-i}-a^{-1}bcq^{i-d-1})(q^{-i}-a^{-1}bc^{-1}q^{i-d-1}) \label{e10}
\end{align}
for $1\leq i\leq d.$

\item[{\rm (ii)}] $c$ is a root of $x^2-\kappa x+1$ where
\begin{align}\label{e1.14}
\kappa=ab^{-1}q^{d-1}+a^{-1}bq^{1-d}+\frac{\phi_1}{(q-q^{-1})(q^d-q^{-d})}.
\end{align}
\end{enumerate}
\end{lem}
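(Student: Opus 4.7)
The plan is to reduce the equivalence (i)$\Leftrightarrow$(ii) to the single identity $c+c^{-1}=\kappa$. The first observation is that the right-hand sides of (\ref{e9}) and (\ref{e10}) depend on $c$ only through the symmetric function $c+c^{-1}$; indeed
\begin{align*}
(q^{-i}-abcq^{i-d-1})(q^{-i}-abc^{-1}q^{i-d-1})=q^{-2i}-abq^{-d-1}(c+c^{-1})+a^2b^2q^{2i-2d-2},
\end{align*}
and similarly for the corresponding factor in (\ref{e10}). Since the polynomial $x^2-\kappa x+1$ has constant term $1$, any root is automatically nonzero, so its roots are precisely the nonzero scalars $c$ satisfying $c+c^{-1}=\kappa$. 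Thus (ii) is equivalent to asserting that $c$ is nonzero with $c+c^{-1}=\kappa$.

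For (i)$\Rightarrow$(ii), I will specialize (\ref{e10}) to $i=1$ and apply the above expansion. After distributing the prefactor $ab^{-1}q^{d+1}$ across the three resulting terms and collecting, a routine calculation yields
\begin{align*}
\phi_1=(q-q^{-1})(q^d-q^{-d})\bigl[(c+c^{-1})-ab^{-1}q^{d-1}-a^{-1}bq^{1-d}\bigr].
\end{align*}
Dividing through by $(q-q^{-1})(q^d-q^{-d})$ and solving for $c+c^{-1}$ reproduces exactly the expression (\ref{e1.14}) for $\kappa$, giving $c+c^{-1}=\kappa$.

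For (ii)$\Rightarrow$(i), let $\widetilde{\varphi}_i,\widetilde{\phi}_i$ denote the right-hand sides of (\ref{e9}) and (\ref{e10}). The displayed identity of the previous paragraph is an algebraic identity in $c+c^{-1}$; reading it in reverse under the assumption $c+c^{-1}=\kappa$ gives $\widetilde{\phi}_1=\phi_1$. By (PA3) of Lemma~\ref{thm1.1}, once $\phi_1$ and the eigenvalue sequences are fixed, the sequence $\{\varphi_i\}_{i=1}^d$ is uniquely determined; by (PA4), once $\varphi_1$ and the eigenvalue sequences are fixed, $\{\phi_i\}_{i=1}^d$ is uniquely determined. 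It therefore suffices to verify that $\widetilde{\varphi}_i,\widetilde{\phi}_i$ themselves satisfy (PA3) and (PA4) when (\ref{e7}), (\ref{e8}) are substituted for the $\theta$- and $\theta^*$-sequences. This is a direct algebraic computation: evaluate the sum $\sum_{h=0}^{i-1}(\theta_h-\theta_{d-h})/(\theta_0-\theta_d)$ in closed form using Lemma~\ref{lem1.4}, factor $(\theta_i^*-\theta_0^*)(\theta_{i-1}-\theta_d)$ using Lemma~\ref{lem1.2}, combine the two terms, and recognize the result as the product of four $q$-binomial factors on the right-hand side of (\ref{e9}); the argument for (\ref{e10}) from (PA4) is entirely analogous. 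The main obstacle is this last collection step, which is routine but demands careful bookkeeping of the $q$-powers in order to line up all four factors correctly.
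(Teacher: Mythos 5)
Your proposal is correct and follows essentially the same route as the paper: in both directions you reduce to the scalar identity $c+c^{-1}=\kappa$, obtain $\phi_1=\widetilde{\phi}_1$, and then propagate along (PA3) and (PA4) with the aid of Lemmas~\ref{lem1.2} and \ref{lem1.4} to recover all of (\ref{e9}), (\ref{e10}). The only cosmetic difference is that you make explicit the observation that the right-hand sides of (\ref{e9}), (\ref{e10}) depend on $c$ only through $c+c^{-1}$, which the paper leaves implicit.
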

\begin{proof}
(i) $\Rightarrow$ (ii): Set $i=1$ in (\ref{e10}) and rearrange terms to obtain
$c+c^{-1}=\kappa.$ Therefore $c$ is a root of $x^2-\kappa x+1.$

(ii) $\Rightarrow$ (i): Note that $c$ is nonzero, and $c^{-1}$ is a root of $x^2-\kappa x+1.$
Therefore $c+c^{-1}=\kappa.$ We substitute this into the left-hand
side of (\ref{e1.14}) and then solve for $\phi_1$ to get
\begin{align}\label{e1.4}
\phi_1=ab^{-1}q^{d+1}(q-q^{-1})(q^{-d}-q^{d})(q^{-1}-a^{-1}bcq^{-d})(q^{-1}-a^{-1}bc^{-1}q^{-d}).
\end{align}
This gives (\ref{e10}) with $i=1.$ To get (\ref{e9}), evaluate the
right-hand side of {\rm (PA3)} using Lemma~\ref{lem1.2},
Lemma~\ref{lem1.4} and (\ref{e1.4}). To get (\ref{e10}) for $2\leq
i\leq d,$ evaluate the right-hand side of {\rm (PA4)} using
Lemma~\ref{lem1.2}, Lemma~\ref{lem1.4} and (\ref{e9}) with $i=1.$
The result follows.
\end{proof}

\begin{cor}\label{lem1.10}
Let
$(\{\theta_i\}^d_{i=0},\{\theta_i^*\}^d_{i=0},\{\varphi_i\}^d_{i=1},\{\phi_i\}^d_{i=1})$
denote a parameter array over $\mathbb{K}$ that has QRacah type.
Let $a,b,q$ denote nonzero scalars in $\mathbb{K}$ such that
$q^2\not=\pm 1$ and {\rm (\ref{e7})}, {\rm (\ref{e8})} hold. Then there exists $c \in \mathbb{K}$ that satisfies the equivalent conditions of Lemma~\ref{lem1.9}. Moreover if $c$ satisfies
these conditions then so does $c^{-1},$ and no other scalar in $\mathbb{K}$ satisfies these conditions.

\end{cor}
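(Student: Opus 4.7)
The plan is to invoke Lemma~\ref{lem1.9} to convert the statement into a question about roots of the quadratic $p(x)=x^2-\kappa x+1$, and then to appeal to basic facts about quadratics over an algebraically closed field.

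First, since $\mathbb{K}$ is algebraically closed, the polynomial $p(x)=x^2-\kappa x+1$ splits over $\mathbb{K}$, so there exists $c \in \mathbb{K}$ with $p(c)=0$. Observe that $p(0)=1\not=0$, so any root of $p$ is automatically nonzero. Thus such a $c$ satisfies condition (ii) of Lemma~\ref{lem1.9}, and consequently also condition (i), establishing existence.

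Next, suppose $c$ is any scalar in $\mathbb{K}$ satisfying the equivalent conditions of Lemma~\ref{lem1.9}; by (ii) and the above, $c$ is a nonzero root of $p(x)$. Since the product of the two roots of $p(x)$ equals the constant term $1$, the companion root is $c^{-1}$. Equivalently, dividing $c^2-\kappa c+1=0$ by $c^2$ yields $(c^{-1})^2-\kappa c^{-1}+1=0$, so $c^{-1}$ is also a root of $p$ and hence also satisfies the conditions of Lemma~\ref{lem1.9}. Finally, since $p$ has degree $2$ it has at most two roots in $\mathbb{K}$, so no scalar other than $c$ and $c^{-1}$ can satisfy the conditions.

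There is no real obstacle here; the content was already packaged in Lemma~\ref{lem1.9}, which reduces the statement to the elementary observation that $x^2-\kappa x+1$ has a root over an algebraically closed field and that its two roots are mutual inverses. The only small point worth noting in the write-up is the verification that $0$ is not a root, which ensures that the produced $c$ satisfies the nonvanishing hypothesis built into condition (i).
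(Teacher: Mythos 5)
Your proof is correct and follows the same route as the paper, which simply says the result is immediate from Lemma~\ref{lem1.9}; you have unpacked that by noting that $\mathbb{K}$ is algebraically closed, that $x^2-\kappa x+1$ has nonzero roots whose product is $1$, and that a quadratic has at most two roots.
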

\begin{proof}
Immediate from Lemma~\ref{lem1.9}.
\end{proof}

Recall the $D_4$ action on the set $PA,$ from below
Definition~\ref{defn1.22}.

\begin{lem}\label{lem1.16}
The set $PA$-$QRAC$ is closed under the action of $D_4$ on $PA.$
\end{lem}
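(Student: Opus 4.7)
The plan is to give an essentially one-line argument using results already in hand. Recall from the discussion just below Definition~\ref{defn1.22} that the map sending a Leonard system to its parameter array is a bijection $LS \to PA$, and that the $D_4$ action on $PA$ was \emph{defined} as the one induced from the $D_4$ action on $LS$ via this bijection. Hence the bijection is $D_4$-equivariant by construction. By Definition~\ref{defn1.9} and Definition~\ref{defn1.24}, the subset $PA\text{-}QRAC \subseteq PA$ is precisely the image of $QRAC \subseteq LS$ under this bijection. Since Lemma~\ref{lem2.7} asserts that $QRAC$ is closed under the $D_4$ action on $LS$, applying the equivariant bijection transports this closure to $PA\text{-}QRAC$.

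Alternatively, one can give a hands-on verification that parallels the proof of Lemma~\ref{lem2.7}. It suffices to check closure under the three generators $*, \downarrow, \Downarrow$ of $D_4$. Let $(\{\theta_i\},\{\theta_i^*\},\{\varphi_i\},\{\phi_i\})$ lie in $PA\text{-}QRAC$, with $\theta_i, \theta_i^*$ given by (\ref{e7}), (\ref{e8}) for some nonzero $a,b,q$ with $q^2 \neq \pm 1$. By Lemma~\ref{thm1.2}(i) the $*$-image has eigenvalue sequence $\{\theta_i^*\}$ and dual eigenvalue sequence $\{\theta_i\}$; these clearly still have the form (\ref{e7}), (\ref{e8}) with the roles of $a$ and $b$ interchanged. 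By Lemma~\ref{thm1.2}(ii) the $\downarrow$-image has eigenvalue sequence $\{\theta_i\}$ and dual eigenvalue sequence $\{\theta_{d-i}^*\}$. Applying Lemma~\ref{lem1.14} to the starred sequence gives $\theta_{d-i}^* = b q^{d-2i} + b^{-1} q^{2i-d}$, which is of the form (\ref{e8}) with $b$ replaced by $b^{-1}$. The case of $\Downarrow$ is symmetric, again using Lemma~\ref{lem1.14} applied to $\{\theta_i\}$ and replacing $a$ by $a^{-1}$.

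There is no real obstacle here; the only thing one must remember is that the underlying field parameter $q$ stays the same under every element of $D_4$ (only $a$ and $b$ get inverted or swapped), so the condition $q^2 \neq \pm 1$ continues to hold and the ``QRacah shape'' is preserved. Given how short the argument is, I would favor the first (equivariance) version in the write-up, possibly with a parenthetical remark pointing out that a direct verification follows from Lemma~\ref{thm1.2} and Lemma~\ref{lem1.14}.
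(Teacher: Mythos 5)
Your first argument is exactly the paper's proof: the paper cites Lemma~\ref{lem2.7} together with Definition~\ref{defn1.9}, which is the equivariance-of-the-bijection argument you spell out. The alternative direct verification you sketch (via Lemma~\ref{thm1.2} and Lemma~\ref{lem1.14}) is also correct, and is in fact how the paper proves Lemma~\ref{lem2.7} itself, so it is not a genuinely different route.
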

\begin{proof}
Immediate from Lemma~\ref{lem2.7} and Definition~\ref{defn1.9}.
\end{proof}

\section{A set $QRAC_{red}$}

In (\ref{e7})--(\ref{e10}) we obtained formulae for a parameter
array of QRacah type. Those formulae involve a sequence
of scalars $(a,b,c;q).$ In this section we examine
the properties of this sequence.

\begin{defn}\label{defn1.8}
Let $QRAC_{red}=QRAC_{red}(d,\mathbb{K})$ denote the set of all $4$-tuples $(a,b,c;q)$ of scalars in $\mathbb{K}$ that satisfy the following conditions (RQRAC1)--(RQRAC4).
\begin{description}
\item[{\rm (RQRAC1)}] $a\not=0,$ $b\not=0,$ $c\not=0,$ $q\not=0.$
\item[{\rm (RQRAC2)}] $q^{2i}\not=1$ for $1\leq i\leq d.$
\item[{\rm (RQRAC3)}] Neither of $a^2,$ $b^2$ is among $q^{2d-2},q^{2d-4},\ldots,q^{2-2d}.$
\item[{\rm (RQRAC4)}] None of $abc,$ $a^{-1}bc,$ $ab^{-1}c,$ $abc^{-1}$ is among $q^{d-1},q^{d-3},\ldots,q^{1-d}.$
\end{description}
\end{defn}

\begin{lem}\label{lem1.19}
Let $(\{\theta_i\}^d_{i=0},\{\theta_i^*\}^d_{i=0},\{\varphi_i\}^d_{i=1},\{\phi_i\}^d_{i=1})$
denote a parameter array over $\mathbb{K}$ that has QRacah type.
Let $a,b,c,q$ denote nonzero scalars in $\mathbb{K}$ that satisfy
{\rm (\ref{e7})}--{\rm (\ref{e10})}. Then
$(a,b,c;q)\in QRAC_{red}.$
\end{lem}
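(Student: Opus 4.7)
The plan is to verify conditions (RQRAC1)--(RQRAC4) one at a time, deriving each from the standing hypotheses together with (PA1), (PA2) from Lemma~\ref{thm1.1} and the formulae (\ref{e9}), (\ref{e10}).

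First, (RQRAC1) is immediate: the scalars $a,b,q$ are nonzero by the QRacah-type hypothesis, and $c$ is nonzero by condition (i) of Lemma~\ref{lem1.9}. For (RQRAC2) and (RQRAC3) I would invoke (PA1), which says that both $\{\theta_i\}_{i=0}^d$ and $\{\theta_i^*\}_{i=0}^d$ are mutually distinct. Applying Lemma~\ref{lem1.6} to the sequence $\{\theta_i\}_{i=0}^d$ (with parameter $a$) yields condition~(i) of that lemma, namely $q^{2i}\neq 1$ for $1\leq i\leq d$, which is precisely (RQRAC2); it also yields condition~(ii), namely $a^2\neq q^{2d-2i}$ for $1\leq i\leq 2d-1$, which is the $a^2$ half of (RQRAC3) after noting that the exponents $2d-2i$ for $i=1,\ldots,2d-1$ enumerate exactly $2d-2,2d-4,\ldots,2-2d$. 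Applying the same lemma to $\{\theta_i^*\}_{i=0}^d$ (with parameter $b$) gives the $b^2$ half.

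For (RQRAC4) I would use (PA2), which requires $\varphi_i\neq 0$ and $\phi_i\neq 0$ for $1\leq i\leq d$. In the product (\ref{e9}) the factors $q^i-q^{-i}$ and $q^{i-d-1}-q^{d-i+1}$ are nonzero by (RQRAC2) (the latter because $q^{2(d+1-i)}\neq 1$ for $1\leq i\leq d$). Hence $\varphi_i\neq 0$ forces the two remaining factors to be nonzero, i.e.\ $abc\neq q^{d+1-2i}$ and $abc^{-1}\neq q^{d+1-2i}$ for $1\leq i\leq d$. As $i$ runs over $1,\ldots,d$ the exponent $d+1-2i$ takes exactly the values $d-1,d-3,\ldots,1-d$, so $abc$ and $abc^{-1}$ avoid the required set. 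The analogous analysis of (\ref{e10}) using $\phi_i\neq 0$ yields $a^{-1}bc$ and $a^{-1}bc^{-1}$ avoid $\{q^{d-1},q^{d-3},\ldots,q^{1-d}\}$.

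The one subtlety—this is the closest thing to an obstacle—is that (RQRAC4) as stated involves $ab^{-1}c$ rather than the $a^{-1}bc^{-1}$ that falls out of the computation. I would resolve this by observing that the set $S=\{q^{d-1},q^{d-3},\ldots,q^{1-d}\}$ is invariant under $x\mapsto x^{-1}$ (the exponents form a symmetric interval around $0$), so $a^{-1}bc^{-1}\notin S$ is equivalent to its inverse $ab^{-1}c\notin S$. Combining the four resulting non-membership statements gives (RQRAC4), completing the verification.
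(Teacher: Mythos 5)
Your proposal is correct and takes essentially the same route as the paper: (RQRAC1) is immediate, (RQRAC2)--(RQRAC3) follow from (PA1) via Lemma~\ref{lem1.6}, and (RQRAC4) follows from (PA2) by inspecting the factors in (\ref{e9}), (\ref{e10}). You simply supply the details that the paper leaves to the reader, including the observation that the set $\{q^{d-1},\ldots,q^{1-d}\}$ is inversion-closed, which converts the $a^{-1}bc^{-1}$ coming from (\ref{e10}) into the $ab^{-1}c$ appearing in (RQRAC4).
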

\begin{proof}
It is clear that $a,b,c,q$ satisfy (RQRAC1). Conditions (RQRAC2), (RQRAC3) follow from (PA1) and Lemma~\ref{lem1.6}. Condition (RQRAC4) follows from (PA2). Therefore $(a,b,c;q)\in QRAC_{red}.$
\end{proof}

\begin{lem}\label{lem1.15}
Let $(a,b,c;q)\in QRAC_{red}.$ Define
$\{\theta_i\}^d_{i=0},\{\theta_i^*\}^d_{i=0},\{\varphi_i\}^d_{i=1},\{\phi_i\}^d_{i=1}$
by {\rm (\ref{e7})}--{\rm (\ref{e10})}. Then
$(\{\theta_i\}^d_{i=0},\{\theta_i^*\}^d_{i=0},$
$\{\varphi_i\}^d_{i=1},\{\phi_i\}^d_{i=1})$ is a parameter array
over $\mathbb{K}$ that has QRacah type.
\end{lem}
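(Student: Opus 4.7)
The plan is to invoke Lemma~\ref{thm1.1} and verify the five conditions (PA1)--(PA5) for the sequence built from $(a,b,c;q)$ via (\ref{e7})--(\ref{e10}). Once (PA1)--(PA5) are checked, the sequence is a parameter array, and it has QRacah type by Definition~\ref{defn1.9}: the Leonard system supplied by Lemma~\ref{thm1.1} has eigenvalue sequences (\ref{e7}), (\ref{e8}), which are of the form required by Definition~\ref{defn1.15} with the given $a,b,q$ (and $d\geq 3$ is a standing assumption of this section).

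For the easy conditions: (PA1) is a direct consequence of Lemma~\ref{lem1.6} applied to $\{\theta_i\}_{i=0}^d$ with scalars $a,q$ and to $\{\theta_i^*\}_{i=0}^d$ with scalars $b,q$; hypotheses (RQRAC2) and (RQRAC3) are precisely what that lemma requires. (PA5) follows from Lemma~\ref{lem1.5} applied to each of the two sequences, both ratios equaling $q^2+1+q^{-2}$ and hence being equal and independent of $i$. For (PA2), I inspect the factorizations (\ref{e9}), (\ref{e10}): the factors $q^i-q^{-i}$ and $q^{i-d-1}-q^{d-i+1}$ are nonzero for $1\leq i\leq d$ by (RQRAC2), while the four remaining factors are nonzero by (RQRAC4), noting that the set $\{q^{d-1},q^{d-3},\ldots,q^{1-d}\}$ is invariant under inversion, so (RQRAC4) in fact rules out all eight combinations $a^{\pm 1}b^{\pm 1}c^{\pm 1}$ from that set.

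The main work is verifying (PA3) and (PA4). Specializing (\ref{e9})--(\ref{e10}) at $i=1$ gives explicit expressions for $\varphi_1$ and $\phi_1$, which I substitute into the right-hand sides of (PA3), (PA4). Expanding each $\theta$-difference by Lemma~\ref{lem1.2} and telescoping the sum via Lemma~\ref{lem1.4}, the common factor $(q^i-q^{-i})(q^{d-i+1}-q^{i-d-1})$ appears in every term and can be divided out. What remains is a polynomial identity in $a,b,c,q$ that is symmetric under $c\leftrightarrow c^{-1}$; both sides expand to the same affine expression in $c+c^{-1}$, and the identity reduces to matching the coefficient of $c+c^{-1}$ (which is $1$ on each side after cancellation) together with two constant monomials in $a,b,q$. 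The main obstacle is purely bookkeeping — tracking the many $q$-exponents through the $(c+c^{-1})$ reduction is tedious but mechanical, and the identity emerges essentially as the reverse of the calculation used in Lemma~\ref{lem1.9} (ii)$\Rightarrow$(i).
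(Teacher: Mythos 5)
Your proposal is correct and follows essentially the same approach as the paper's proof: verifying (PA1)--(PA5) using Lemmas~\ref{lem1.2}, \ref{lem1.4}, \ref{lem1.5}, \ref{lem1.6} together with (RQRAC1)--(RQRAC4), and then observing that the resulting parameter array has QRacah type by construction. The only refinement you add is the useful observation that the set $\{q^{d-1},q^{d-3},\ldots,q^{1-d}\}$ is inversion-invariant, so (RQRAC4) also rules out $a^{-1}bc^{-1}$ (the inverse of $ab^{-1}c$) from that set, which is the one factor in~(\ref{e10}) not directly listed in (RQRAC4); the paper's proof leaves this implicit.
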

\begin{proof}
We show that $(\{\theta_i\}^d_{i=0},\{\theta_i^*\}^d_{i=0},\{\varphi_i\}^d_{i=1},\{\phi_i\}^d_{i=1})$ is a parameter array over $\mathbb{K}.$ Condition (PA1) follows from Lemma~\ref{lem1.6}, (RQRAC2),  (RQRAC3). Condition (PA2) follows from (RQRAC1), (RQR\\AC2), (RQRAC4). Using Lemma~\ref{lem1.2} and Lemma~\ref{lem1.4} it is routine to verify (PA3), (PA4). Condition (PA5) follows from  Lemma~\ref{lem1.5}. We have shown that $(\{\theta_i\}^d_{i=0},\{\theta_i^*\}^d_{i=0},\{\varphi_i\}^d_{i=1},\{\phi_i\}^d_{i=1})$ is a parameter array over $\mathbb{K}.$ By construction this parameter array has QRacah type.
\end{proof}

\begin{defn}\label{defn1.10}
Let $(a,b,c;q)\in QRAC_{red}.$ Let
$(\{\theta_i\}^d_{i=0},\{\theta_i^*\}^d_{i=0},\{\varphi_i\}^d_{i=1},\{\phi_i\}^d_{i=1})$
denote a parameter array over $\mathbb{K}$ that has QRacah type.
We say these {\it correspond} whenever they satisfy (\ref{e7})--(\ref{e10}).
\end{defn}

Note that each $(a,b,c;q)\in QRAC_{red}$ corresponds to a unique element of $PA$-$QRAC.$

\begin{lem}\label{lem1.8}
Let $(a,b,c;q)\in QRAC_{red}.$ Then all of the following are in $QRAC_{red}.$
\begin{align*}
\begin{split}
\begin{array}{ll}
(a,b,c;q), \qquad  \qquad &((-1)^da,(-1)^db,(-1)^{d+1}c; -q),\\
(a,b,c^{-1};q), \qquad \qquad &((-1)^da, (-1)^db, (-1)^{d+1}c^{-1}; -q),\\
(a^{-1},b^{-1},c^{-1};q^{-1}), \qquad  \qquad&((-1)^da^{-1},(-1)^db^{-1},(-1)^{d+1}c^{-1};-q^{-1}),\\
(a^{-1},b^{-1},c;q^{-1}), \qquad  \qquad &((-1)^da^{-1},(-1)^db^{-1},(-1)^{d+1}c;-q^{-1}).
\end{array}
\end{split}
\end{align*}
Moreover all the above elements correspond to the same element of $PA$-$QRAC.$
\end{lem}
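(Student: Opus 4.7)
The plan is to recognize the eight displayed tuples as a single orbit under an explicit group of symmetries, and then check that this group preserves both $QRAC_{red}$ and the parameter array defined by (\ref{e7})--(\ref{e10}). Define three maps on $4$-tuples of nonzero scalars:
\begin{align*}
\iota_1 &\colon (a,b,c,q)\mapsto (a,b,c^{-1},q),\\
\iota_2 &\colon (a,b,c,q)\mapsto (a^{-1},b^{-1},c^{-1},q^{-1}),\\
\iota_3 &\colon (a,b,c,q)\mapsto ((-1)^d a,(-1)^d b,(-1)^{d+1}c,-q).
\end{align*}
A direct check shows each $\iota_k$ is an involution and the three pairwise commute, so they generate a group $G\cong (\mathbb{Z}/2\mathbb{Z})^3$. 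One verifies by inspection that the eight tuples listed in the lemma are precisely the $G$-orbit of $(a,b,c;q)$, so everything reduces to the two claims that $QRAC_{red}$ is $G$-stable and that the map into $PA\text{-}QRAC$ given by (\ref{e7})--(\ref{e10}) is $G$-invariant.

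For $G$-stability of $QRAC_{red}$: condition (RQRAC1) is obvious. Condition (RQRAC2) is preserved because the set $\{q^{2i}:1\le i\le d\}$ is stable under $q\mapsto \pm q^{\pm 1}$. For (RQRAC3), note that $a^{2}$ is preserved by $\iota_{1}$ and $\iota_{3}$ and replaced by $a^{-2}$ under $\iota_{2}$, while the forbidden set $\{q^{2d-2i}:1\le i\le 2d-1\}$ is stable under $x\mapsto x^{-1}$ and under $q\mapsto \pm q^{\pm 1}$; the same applies to $b^{2}$. Condition (RQRAC4) is analogous: the four products $abc,\,a^{-1}bc,\,ab^{-1}c,\,abc^{-1}$ are permuted among themselves and their inverses by $G$, and $\{q^{d-2i+1}:1\le i\le d\}$ has the same symmetries.

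For $G$-invariance of the parameter array, the eigenvalue sequences are handled first: $\theta_{i}$ and $\theta_{i}^{*}$ are independent of $c$, are symmetric in $a\leftrightarrow a^{-1}$ (resp.\ $b\leftrightarrow b^{-1}$) combined with $q\mapsto q^{-1}$, and under $\iota_{3}$ the factor $((-1)^{d})^{\pm1}$ cancels against $(-q)^{2i-d}=(-1)^{d}q^{2i-d}$. Invariance of $\varphi_{i},\phi_{i}$ under $\iota_{1}$ is immediate from the symmetry of the last two factors in (\ref{e9}), (\ref{e10}). For $\iota_{2}$, the key identity
\[
q^{i}-a^{-1}b^{-1}c^{-1}q^{d-i+1}=-a^{-1}b^{-1}c^{-1}q^{d+1}\bigl(q^{-i}-abcq^{i-d-1}\bigr),
\]
together with its $c\leftrightarrow c^{-1}$ analogue (and the corresponding pair for $\phi_{i}$), rewrites each transformed factor as a scalar multiple of an original factor; the two extracted scalars then combine with the transformed leading constant to restore the original constant $a^{-1}b^{-1}q^{d+1}$ (respectively $ab^{-1}q^{d+1}$). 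For $\iota_{3}$ every factor in (\ref{e9}), (\ref{e10}) picks up a sign, and a short parity count (essentially $(-1)^{4i}=1$) shows these signs together with the $(-1)^{d+1}$ coming from the transformed leading constant multiply to $+1$.

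The main obstacle is the $\iota_{2}$ verification for $\varphi_{i}$ and $\phi_{i}$: one must confirm that the two prefactors extracted from the transformed factors exactly cancel the discrepancy between the transformed and original leading constants. Once this bookkeeping is carried out, everything else is routine tracking of signs and exponents.
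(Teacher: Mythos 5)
Your proof is correct and fills in exactly the ``routine check'' that the paper elides (the paper's entire proof of this lemma is ``This is routinely checked''). Organizing the eight tuples as the orbit of the three commuting involutions $\iota_1,\iota_2,\iota_3$ and checking stability of (RQRAC1)--(RQRAC4) and invariance of $(\theta_i,\theta_i^*,\varphi_i,\phi_i)$ generator by generator is the natural systematic way to carry out that verification; in particular your key identity for the $\iota_2$ case and the $(-1)^{4i}=1$ parity count for $\iota_3$ are both accurate.
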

\begin{proof}
This is routinely checked.
\end{proof}

\begin{lem}\label{lem1.11}
Let $(a,b,c;q)\in QRAC_{red}.$ Assume $p\in PA$-$QRAC$ corresponds to $(a,b,c;q).$ Then each element of $QRAC_{red}$ that corresponds to $p$ is listed in Lemma~\ref{lem1.8}.
\end{lem}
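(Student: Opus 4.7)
The plan is to show that any $(a',b',c';q') \in QRAC_{red}$ that corresponds to the given $p \in PA$-$QRAC$ must already appear in Lemma~\ref{lem1.8}. I would extract $q'$ from $p$ first, then $(a',b')$, then $c'$.

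Since $p$ has two QRacah presentations $\theta_i = aq^{2i-d}+a^{-1}q^{d-2i} = a'q'^{2i-d}+a'^{-1}q'^{d-2i}$, I would apply Lemma~\ref{lem1.5} (the range $2\leq i\leq d-1$ is nonempty because $d\geq 3$) to conclude that $q^2+q^{-2} = q'^2+q'^{-2}$, so $q'^2$ is a root of $x^2-(q^2+q^{-2})x+1=0$; this forces $q' \in \{q,-q,q^{-1},-q^{-1}\}$. For each of these four choices I would apply Lemma~\ref{lem1.7} to $(\theta_0,\theta_1)$: a short calculation tracking $(\pm q)^d=(\pm 1)^d q^d$ and the denominator $q'^2-q'^{-2}=\pm(q^2-q^{-2})$ yields $a' \in \{a,(-1)^d a,a^{-1},(-1)^d a^{-1}\}$ respectively. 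The identical argument applied to $\{\theta_i^*\}_{i=0}^d$ determines $b'$ in each case, and the four resulting pairs $(a',b')$ match the first two coordinates of the eight tuples in Lemma~\ref{lem1.8}.

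With $(a',b',q')$ now fixed as one of four explicit tuples, the remaining freedom in $c'$ is controlled by Corollary~\ref{lem1.10}: $c'$ and $c'^{-1}$ are the two roots of $x^2-\kappa' x+1=0$, where $\kappa'$ is computed from $a',b',q'$ and $\phi_1$ via (\ref{e1.14}). For each of the four choices of $q'$, Lemma~\ref{lem1.8} supplies two candidate tuples whose $c$-coordinates are mutual inverses. Since Corollary~\ref{lem1.10} allows only one such reciprocal pair, those two candidates exhaust the possibilities for $c'$, completing the enumeration.

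The main obstacle is the sign-bookkeeping in the middle stage: one has to verify that the $(-1)^d$ factors generated by $q'=-q$ and $q'=-q^{-1}$ propagate consistently through the formula (\ref{e1.11}) to give precisely the signs of $a'$ and $b'$ appearing in Lemma~\ref{lem1.8}, and likewise that the reciprocal pair of $c'$-values coming from $x^2-\kappa' x+1=0$ matches the $(-1)^{d+1}c^{\pm 1}$ entries there. Apart from this careful case analysis, the argument is a direct application of Lemma~\ref{lem1.5}, Lemma~\ref{lem1.7}, and Corollary~\ref{lem1.10}.
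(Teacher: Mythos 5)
Your proposal is correct and follows essentially the same route as the paper's proof: Lemma~\ref{lem1.5} pins $q'$ down to $\{q,-q,q^{-1},-q^{-1}\}$, Lemma~\ref{lem1.7} then fixes $a'$ and $b'$, and Corollary~\ref{lem1.10} leaves only $c'$ and $c'^{-1}$. The only difference is stylistic: the paper avoids the explicit four-way sign-bookkeeping by invoking Lemma~\ref{lem1.8} to replace $(a,b,c;q)$ with a listed representative whose last slot matches $q'$, so that without loss of generality $q'=q$, while you carry out the case analysis directly.
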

\begin{proof}
Suppose we are given $(x,y,z;t)\in QRAC_{red}$ that corresponds to $p.$ By
Lemma~\ref{lem1.5} we find $t^2+t^{-2}=q^2+q^{-2},$ so $t\in\{q,q^{-1},-q,-q^{-1}\}.$ Replacing $q$ by one of
$q,q^{-1},-q,-q^{-1}$ if necessary, we may assume without loss of
generality that $t=q.$ Now $x=a$ by Lemma~\ref{lem1.7} and
similarly $y=b.$ By Corollary~\ref{lem1.10},
$z=c$ or $z=c^{-1}.$ The result follows.
\end{proof}

\begin{defn}\label{defn1.11}
Let $(a,b,c;q)\in QRAC_{red}.$ Let $\Phi$ denote a Leonard system over $\mathbb{K}$ that has QRacah type. We say $(a,b,c;q)$ and $\Phi$ {\it correspond} whenever $(a,b,c;q)$ corresponds to the parameter
array of $\Phi.$
\end{defn}

\begin{cor}\label{lem2.8}
Let $\Phi$ denote a Leonard system over $\mathbb{K}$ that has QRacah type.
Assume $(a,b,c;q)\in QRAC_{red}$ corresponds to $\Phi.$ Then a given element of $QRAC_{red}$ corresponds to $\Phi$ if and only if it is listed in Lemma~\ref{lem1.8}.
\end{cor}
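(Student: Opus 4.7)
The plan is to reduce the statement about Leonard systems to the already-established statement about parameter arrays, and then invoke the two lemmas that do the real work.

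First, by Definition~\ref{defn1.11}, an element $(a',b',c';q') \in QRAC_{red}$ corresponds to $\Phi$ if and only if it corresponds to the parameter array $p$ of $\Phi$. Let $p \in PA$-$QRAC$ denote this parameter array. The hypothesis that $(a,b,c;q) \in QRAC_{red}$ corresponds to $\Phi$ therefore means that $(a,b,c;q)$ corresponds to $p$.

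Next, I would establish both directions. For the forward direction, suppose $(a',b',c';q') \in QRAC_{red}$ corresponds to $\Phi$. Then $(a',b',c';q')$ corresponds to $p$, and also $(a,b,c;q)$ corresponds to $p$. By Lemma~\ref{lem1.11}, $(a',b',c';q')$ must appear in the list of Lemma~\ref{lem1.8}. Conversely, suppose $(a',b',c';q')$ is one of the eight tuples listed in Lemma~\ref{lem1.8} (with respect to $(a,b,c;q)$). Then Lemma~\ref{lem1.8} guarantees $(a',b',c';q') \in QRAC_{red}$ and that it corresponds to the same element of $PA$-$QRAC$ as $(a,b,c;q)$, namely $p$. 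Hence $(a',b',c';q')$ corresponds to $\Phi$.

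Combining the two directions gives the desired equivalence. There is no real obstacle here — the work has already been done in Lemma~\ref{lem1.8} (closure and common correspondence) and Lemma~\ref{lem1.11} (uniqueness of the list), and Definition~\ref{defn1.11} provides the translation between corresponding to $\Phi$ and corresponding to $p$. So the proof is just a short chain of implications.
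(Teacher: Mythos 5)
Your proof is correct and follows essentially the same route as the paper's, which simply cites Lemma~\ref{lem1.8} and Lemma~\ref{lem1.11} as immediate; you have merely spelled out the translation via Definition~\ref{defn1.11} and the two directions explicitly.
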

\begin{proof}
Immediate from Lemma~\ref{lem1.8} and Lemma~\ref{lem1.11}.
\end{proof}

\section{A $D_4$ action on $QRAC_{red}$}

Recall the set $QRAC_{red}$ from Definition~\ref{defn1.8}. In this
section we display an action of $D_4$ on $QRAC_{red}.$ We show how this action is related to the
$D_4$ action on $PA$-$QRAC$ given in Lemma~\ref{lem1.16}.

\begin{lem}\label{lem2.3}
There exists a unique $D_4$ action on $QRAC_{red}$ such that
\begin{align}
(a,b,c;q)^*&=(b^{-1},a^{-1},c^{-1};q^{-1}), \label{e4.6}\\
(a,b,c;q)^\downarrow&=(a,b^{-1},c;q), \label{e4.7}\\
(a,b,c;q)^\Downarrow&=(a^{-1},b,c;q) \label{e4.8}
\end{align}
for all $(a,b,c;q)\in QRAC_{red}.$
\end{lem}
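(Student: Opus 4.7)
The plan is to define three maps on $QRAC_{red}$ by the formulas (\ref{e4.6})--(\ref{e4.8}), verify each lands in $QRAC_{red}$ and is an involution, and then check that they satisfy the three nontrivial defining relations (\ref{r1.1}), (\ref{r1.2}) of $D_4$. Uniqueness is automatic because $*,\downarrow,\Downarrow$ generate $D_4$, so any action is determined by its values on these three generators.

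First I would show that each of the three formulas preserves the four conditions of Definition~\ref{defn1.8}. Condition (RQRAC1) is trivial since inversion preserves nonvanishing. For (RQRAC2), only $*$ changes $q$, and $q^{-2i}\neq 1\iff q^{2i}\neq 1$. For (RQRAC3), inverting $a$ or $b$ turns $a^2$ or $b^2$ into its reciprocal, while the forbidden set $\{q^{2d-2},q^{2d-4},\ldots,q^{2-2d}\}$ is closed under reciprocation; applying $*$ also inverts $q$, which permutes the forbidden set among itself. The key observation is the same for (RQRAC4): the forbidden set $\{q^{d-1},q^{d-3},\ldots,q^{1-d}\}$ is closed under $x\mapsto x^{-1}$ and (under $*$) also under $q\mapsto q^{-1}$, so inverting any of $a,b,c$ simply permutes the four quantities $abc,a^{-1}bc,ab^{-1}c,abc^{-1}$ together with their reciprocals, all of which remain outside the forbidden set. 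A short case check confirms (RQRAC4) for each of the three maps.

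Next I would verify each map is an involution by direct substitution: applying $*$ twice inverts each entry twice, and $\downarrow,\Downarrow$ each invert exactly one entry. The main work is then checking the three relations in (\ref{r1.2}). Computing the two sides on a generic $(a,b,c;q)$:
\begin{align*}
(a,b,c;q)^{\Downarrow *}&=(a^{-1},b,c;q)^*=(b^{-1},a,c^{-1};q^{-1}),\\
(a,b,c;q)^{*\downarrow}&=(b^{-1},a^{-1},c^{-1};q^{-1})^{\downarrow}=(b^{-1},a,c^{-1};q^{-1}),
\end{align*}
which gives $\Downarrow*=*\downarrow$. The other two relations $\downarrow *=*\Downarrow$ and $\downarrow\Downarrow=\Downarrow\downarrow$ are checked in exactly the same way.

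I do not expect a serious obstacle here; the arithmetic is elementary and the only subtlety is the reciprocation-symmetry of the forbidden sets in (RQRAC3), (RQRAC4), which is what makes the three maps preserve $QRAC_{red}$ in the first place. Once the well-definedness and relations are established, the existence follows from the universal property of $D_4$ as the group presented by $*,\downarrow,\Downarrow$ modulo (\ref{r1.1}), (\ref{r1.2}), and uniqueness follows because the action on the generators determines the action on the whole group.
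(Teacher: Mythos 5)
Your proposal is correct and follows essentially the same approach as the paper: verify that the three formulas map $QRAC_{red}$ into itself, check the involution and commutation relations (\ref{r1.1}), (\ref{r1.2}), and deduce uniqueness from the fact that $*,\downarrow,\Downarrow$ generate $D_4$. The paper's proof is terser (leaving the verifications to the reader), but your filled-in details — in particular the observation that the forbidden sets in (RQRAC3), (RQRAC4) are closed under reciprocation — are exactly the content of what the paper calls "one checks."
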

\begin{proof}
For all $(a,b,c;q)\in QRAC_{red}$ the sequence on the right
in (\ref{e4.6})--(\ref{e4.8}) is contained in $QRAC_{red}.$ Define
maps $*, \downarrow, \Downarrow$ from $QRAC_{red}$ to $QRAC_{red}$
such that (\ref{e4.6})--(\ref{e4.8}) hold for all $(a,b,c;q)\in QRAC_{red}.$ One checks that these maps satisfy the relations (\ref{r1.1}), (\ref{r1.2}). Therefore the desired $D_4$ action exists. This $D_4$ action is unique since $*, \downarrow, \Downarrow$ generate $D_4.$
\end{proof}

\begin{lem}\label{lem2.4}
For all $g\in D_4$ the following diagram commutes.

\unitlength=1mm
\begin{picture}(120,35)
\multiput(69,6)(0,20){2}{\vector(1, 0){20}}

\put(52,25){{\small $QRAC_{red}$}}
\put(90,25){{\small $PA$-$QRAC$}}
\put(52,5){{\small $QRAC_{red}$}}
\put(90,5){{\small $PA$-$QRAC$}}

\multiput(58,23.5)(40.5,0){2}{\vector(0, -1){15}}
\put(76,28){{\small $cor$}}
\put(76,2){{\small$cor$}}
\put(54.5,16){{\small$g$}}
\put(100.5,16){{\small $g$}}
\end{picture}

\noindent Here ``$cor$'' denotes the correspondence relation from Definition~\ref{defn1.10}.
\end{lem}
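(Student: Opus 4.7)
The plan is to reduce the claim to its three generators. Since $D_4$ is generated by $*, \downarrow, \Downarrow$ (with relations already built into both actions --- see Lemma~\ref{lem2.3}), it suffices to verify the diagram commutes when $g$ is one of $*,\downarrow,\Downarrow$; the general case then follows by composition. So fix $(a,b,c;q)\in QRAC_{red}$ and let $p=(\{\theta_i\}_{i=0}^d,\{\theta_i^*\}_{i=0}^d,\{\varphi_i\}_{i=1}^d,\{\phi_i\}_{i=1}^d)$ be its corresponding parameter array, with the $\theta_i,\theta_i^*,\varphi_i,\phi_i$ given by (\ref{e7})--(\ref{e10}). For each generator $g$, I would compute $(a,b,c;q)^g$ using (\ref{e4.6})--(\ref{e4.8}), substitute the resulting $4$-tuple into the defining formulas (\ref{e7})--(\ref{e10}), and match the outcome with $p^g$ as described in Lemma~\ref{thm1.2}.

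The case $g=\Downarrow$ is the cleanest and I would do it first. Sending $a\mapsto a^{-1}$ and using Lemma~\ref{lem1.14} shows the new eigenvalue sequence is $\{\theta_{d-i}\}_{i=0}^d$ while the dual eigenvalue sequence is unchanged. The key observation is that the substitution $a\mapsto a^{-1}$ interchanges the right-hand sides of (\ref{e9}) and (\ref{e10}) term-by-term, so the new first split sequence equals the old $\{\phi_i\}_{i=1}^d$ and the new second split sequence equals the old $\{\varphi_i\}_{i=1}^d$. This is exactly Lemma~\ref{thm1.2}(iii).

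For $g=\downarrow$, sending $b\mapsto b^{-1}$ leaves $\theta_i$ alone and (by Lemma~\ref{lem1.14}) turns $\theta_i^*$ into $\theta_{d-i}^*$. The algebraic content is to show the substitution turns (\ref{e9}) into $\phi_{d-i+1}$ and (\ref{e10}) into $\varphi_{d-i+1}$. The identity that drives this is
\[
q^{i-d-1}-\mu q^{-i}=-\mu\bigl(q^{-i}-\mu^{-1}q^{i-d-1}\bigr),
\]
applied to each of the two rightmost factors of $\phi_{d-i+1}$ (respectively $\varphi_{d-i+1}$). The two $(-\mu)$ factors that come out combine with the overall prefactor $ab^{-1}q^{d+1}$ (respectively $a^{-1}b^{-1}q^{d+1}$) to yield the prefactor $a^{-1}bq^{d+1}$ (respectively $abq^{d+1}$) required by (\ref{e9}) and (\ref{e10}) with $b$ replaced by $b^{-1}$; simultaneously the inner parameters get inverted in the right way. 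After folding in two $-1$'s from swapping the first two factors, agreement is obtained. The case $g=*$ proceeds along the same lines: replacing $(a,b,c;q)$ by $(b^{-1},a^{-1},c^{-1};q^{-1})$ swaps the eigenvalue and dual eigenvalue sequences at sight, and the same factor-reversal trick (now also absorbing $q^{\mp(d+1)}$ powers that appear because $q$ has been inverted) shows the new first split sequence equals $\{\varphi_i\}_{i=1}^d$ while the new second split sequence equals $\{\phi_{d-i+1}\}_{i=1}^d$, matching Lemma~\ref{thm1.2}(i).

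The main obstacle is not conceptual but bookkeeping: one must carefully track the scalar prefactors, signs, and powers of $q$ released by rewriting each of the four linear factors in (\ref{e9}), (\ref{e10}) into the canonical form $(q^{-i}-\nu q^{i-d-1})$ demanded on the other side. Once this is checked for each of the three generators, the commutativity of the square for an arbitrary $g\in D_4$ follows by expressing $g$ as a word in $*,\downarrow,\Downarrow$ and composing commutative squares.
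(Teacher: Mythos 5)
Your proposal is correct and takes essentially the same approach as the paper: reduce to the generators $*,\downarrow,\Downarrow$ and verify, by direct substitution into (\ref{e7})--(\ref{e10}) and comparison with Lemma~\ref{thm1.2}, that the transformed 4-tuple corresponds to the transformed parameter array. The paper compresses this verification into "it is routine to check," and you have filled in that routine accurately, including the key factor-reversal identity and the bookkeeping of signs and prefactors.
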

\begin{proof}
Without loss we may assume that $g$ is one of $*,\downarrow,
\Downarrow.$ Fix $(a,b,c;q)\in QRAC_{red},$ and let $p$ denote the
corresponding element in $PA$-$QRAC.$ It is routine to
check that $p^g$ and $(a,b,c;q)^g$ correspond according to Definition~\ref{defn1.10}. The result
follows.
\end{proof}


\section{The Askey-Wilson relations for Leonard pairs of QRacah type}

Let $(A,A^*)$ denote a Leonard pair. In Section $3$ we saw that
$A,A^*$ satisfy the Askey-Wilson
relations. In this section we consider what those relations look
like for the case in which $(A,A^*)$ has QRacah type.

\begin{defn}\label{defn1.25}
Let $(a,b,c;q)\in QRAC_{red}.$ Let $(A,A^*)$ denote a Leonard pair
over $\mathbb{K}$ that has diameter $d$ and QRacah type. We say $(a,b,c;q)$ and
$(A,A^*)$ {\it correspond} whenever $(a,b,c;q)$ corresponds to
some Leonard system associated with $(A,A^*).$
\end{defn}

\begin{lem}\label{lem2.9}
Let $(A,A^*)$ denote a Leonard pair on $V$ that has
QRacah type. Assume $(a,b,c;q)\in QRAC_{red}$ corresponds to
$(A,A^*).$ Then the scalars
$\beta,\gamma,\gamma^*,\varrho,\varrho^*,\omega,\eta,\eta^*$ from
Lemma~\ref{thm1.6} are as follows:
\begin{align*}
\beta&=q^2+q^{-2},\qquad \gamma=\gamma^*=0, \qquad \varrho=\varrho^*=-(q^2-q^{-2})^2,\\
\omega&=-(q-q^{-1})^2\big((a+a^{-1})(b+b^{-1})+(c+c^{-1})(q^{d+1}+q^{-d-1})\big), \\  
\eta&=(q-q^{-1})(q^2-q^{-2})\big((c+c^{-1})(a+a^{-1})+(b+b^{-1})(q^{d+1}+q^{-d-1})\big),\\ 
\eta^*&=(q-q^{-1})(q^2-q^{-2})\big((b+b^{-1})(c+c^{-1})+(a+a^{-1})(q^{d+1}+q^{-d-1})\big).
\end{align*}
\end{lem}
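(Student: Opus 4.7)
The plan is to apply Lemma~\ref{lem1.20} to a Leonard system $\Phi$ in the associate class of $(A,A^*)$ whose parameter array corresponds to $(a,b,c;q)$ in the sense of Definitions~\ref{defn1.10} and \ref{defn1.25}. This converts the claim into verifying the nine equations (i)--(ix) of Lemma~\ref{lem1.20} for the concrete eigenvalues \eqref{e7}, \eqref{e8} and the first split sequence \eqref{e9}; note that, since $d\ge 3$, Lemma~\ref{thm1.6} guarantees that the scalars $\beta,\gamma,\gamma^*,\varrho,\varrho^*,\omega,\eta,\eta^*$ are uniquely determined, so it suffices to check (vi)--(ix) at one convenient value of $i$.

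Items (i)--(v) are essentially already done in Section~4. By Lemma~\ref{lem1.5} (and its dual, obtained by replacing $a$ with $b$) the ratios in (i) both equal $q^2+1+q^{-2}$, yielding $\beta=q^2+q^{-2}$. By Lemma~\ref{lem2.1} and its dual, condition (ii) holds with $\gamma=0$ and condition (iii) holds with $\gamma^*=0$. Feeding $\gamma=\gamma^*=0$ into (iv) and (v) and invoking Lemma~\ref{lem2.2} (and its dual) gives $\varrho=\varrho^*=-(q^2-q^{-2})^2$.

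For (vi)--(ix) the work is to evaluate $a_i^*$ (resp.\ $a_i$) from Lemma~\ref{lem2.5}, substitute the QRacah expressions \eqref{e7}--\eqref{e9} for $\theta_i,\theta_i^*,\varphi_i$, and simplify. I plan to compute $\omega$ from (vi) at $i=1$: since $\gamma^*=0$ the equation reads $\omega=a_1^*(\theta_1-\theta_2)+a_0^*(\theta_0-\theta_{-1})$ with $\theta_{-1}=\beta\theta_0-\theta_1$ from (ii) at $i=0$. Then I will compute $\eta$ from (viii) at $i=0$: again $\gamma^*=0$ gives $\eta=a_0^*(\theta_0-\theta_{-1})(\theta_0-\theta_1)-\omega\theta_0$. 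The scalar $\eta^*$ is obtained symmetrically from (ix) at $i=0$ after computing $a_0$. The formulas for the relevant differences $\theta_i-\theta_j$ and $\theta_i^*-\theta_j^*$ come from Lemma~\ref{lem1.2}, and the formula for $\varphi_1$ (needed in $a_0,a_0^*,a_1^*$) is read from \eqref{e9} with $i=1$.

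The main obstacle is precisely the bookkeeping in this last step: $\omega,\eta,\eta^*$ are symmetric in the three variables $a+a^{-1}$, $b+b^{-1}$, $c+c^{-1}$ (and in $q^{d+1}+q^{-d-1}$), but this symmetry is not visible in any single intermediate expression. I expect the calculation to produce a rational function in $a,b,c,q$ which, after clearing denominators of the form $\theta_i^*-\theta_{i\pm 1}^*$ using Lemma~\ref{lem1.2} and collecting powers of $q$, telescopes to the stated closed form. Once one of $\omega,\eta,\eta^*$ is verified this way, the other two follow by the same computation with the roles of $(a,b,c)$ permuted according to the $D_4$ action of Lemma~\ref{lem2.3}, which (by Lemma~\ref{lem2.4}) acts on the parameter array in a way that permutes the Askey-Wilson data appropriately.
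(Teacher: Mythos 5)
Your proposal follows the paper's proof exactly: items (i)--(v) of Lemma~\ref{lem1.20} are handled by Lemmas~\ref{lem1.5}, \ref{lem2.1}, \ref{lem2.2}, and items (vi)--(ix) by substituting (\ref{e7})--(\ref{e10}) and the formulas from Lemma~\ref{lem2.5} and then simplifying.

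One correction to your final remark, though: the $D_4$ action of Lemma~\ref{lem2.3} does not cyclically permute $a,b,c$. The only nontrivial permutation it furnishes comes from $*$, namely $(a,b,c;q)\mapsto(b^{-1},a^{-1},c^{-1};q^{-1})$; this swaps $\eta\leftrightarrow\eta^*$ and fixes $\omega$, so it lets you obtain $\eta^*$ from $\eta$ but gives you nothing new about $\omega$. The full cyclic $\mathbb{Z}_3$ permutation of $(a,b,c)$ that links all three of $\omega,\eta,\eta^*$ is precisely what becomes available only after $A^\varepsilon$ is introduced in Theorem~\ref{thm2.3} and the $(\mathbb{Z}_2)^3\rtimes S_3$ action of Lemma~\ref{lem1.13} comes into play; it is not part of the $D_4$ symmetry of a Leonard pair. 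Since you do carry out the substitutions for (vi), (viii), and (ix) independently, this imprecision does not break your argument---but the symmetry shortcut you describe in the last sentence would not by itself recover $\omega$ from $\eta$ or $\eta^*$.
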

\begin{proof}
The scalar $\beta$ is obtained from Lemma~\ref{lem1.20}(i) and Lemma~\ref{lem1.5}. The scalars $\gamma,$ $\gamma^*$ are obtained from Lemma~\ref{lem1.20}(ii),(iii) and Lemma~\ref{lem2.1}. The scalars $\varrho,$ $\varrho^*$ are obtained using Lemma~\ref{lem1.20}(iv),(v) and Lemma~\ref{lem2.2}. To get the scalars $\omega,$ $\eta,$ $\eta^*$ evaluate Lemma~\ref{lem1.20}(vi)--(ix) using Lemma~\ref{lem2.5} and (\ref{e7})--(\ref{e10}).
\end{proof}



\section{The $\mathbb{Z}_3$-symmetric Askey-Wilson relations}

Let $(A,A^*)$ denote a Leonard pair of QRacah type. In the previous section we saw what the corresponding Askey-Wilson relations look like. In this section we show that those Askey-Wilson relations can be put in a form said to be $\mathbb{Z}_3$-symmetric.

\begin{thm}\label{thm2.3}
Let $(A,A^*)$ denote a Leonard pair on $V$ that has QRacah type.
Assume $(a,b,c;q)\in QRAC_{red}$ corresponds to $(A,A^*).$ Then
there exists a unique $A^\varepsilon\in{\rm End}(V)$ such that
\begin{align}
\frac{qA^*A^\varepsilon-q^{-1}A^\varepsilon A^*}{q^2-q^{-2}}+A&=\frac{(b+b^{-1})(c+c^{-1})+(a+a^{-1})(q^{d+1}+q^{-d-1})}{q+q^{-1}}~I,\label{e15}\\
\frac{qA^\varepsilon A-q^{-1}AA^\varepsilon }{q^2-q^{-2}}+A^*&=\frac{(c+c^{-1})(a+a^{-1})+(b+b^{-1})(q^{d+1}+q^{-d-1})}{q+q^{-1}}~I,\label{e16}\\
\frac{qAA^*-q^{-1}A^*A}{q^2-q^{-2}}+A^\varepsilon&=\frac{(a+a^{-1})(b+b^{-1})+(c+c^{-1})(q^{d+1}+q^{-d-1})}{q+q^{-1}}~I.\label{e14}
\end{align}
\end{thm}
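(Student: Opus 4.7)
My plan is to use the third relation (\ref{e14}) to define $A^\varepsilon$ explicitly, and then verify that the other two relations (\ref{e15}), (\ref{e16}) follow from the Askey-Wilson relations of Lemma~\ref{thm1.6} together with the explicit values of the Askey-Wilson parameters computed in Lemma~\ref{lem2.9}. Since $q^2\ne\pm 1$, the scalar $q^2-q^{-2}$ is nonzero, so rearranging (\ref{e14}) forces
$$
A^\varepsilon=\frac{(a+a^{-1})(b+b^{-1})+(c+c^{-1})(q^{d+1}+q^{-d-1})}{q+q^{-1}}\,I-\frac{qAA^*-q^{-1}A^*A}{q^2-q^{-2}}.
$$
This formula is dictated by (\ref{e14}), which immediately delivers the uniqueness assertion. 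It remains to check that this particular $A^\varepsilon$ satisfies (\ref{e15}) and (\ref{e16}).

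For (\ref{e15}), I substitute the above formula into the left-hand side and expand $qA^*A^\varepsilon-q^{-1}A^\varepsilon A^*$. The terms $q\cdot q A^*AA^*$, $q\cdot (-q^{-1})A^{*2}A$, $-q^{-1}\cdot qAA^{*2}$, $-q^{-1}\cdot(-q^{-1})A^*AA^*$ combine to give a coefficient of $q^2+q^{-2}$ on $A^*AA^*$ and $-1$ on each of $A^{*2}A$ and $AA^{*2}$, while the constant term contributes a scalar multiple of $A^*$. After multiplying through by $(q^2-q^{-2})^2$ and using $q^2-q^{-2}=(q-q^{-1})(q+q^{-1})$, the relation (\ref{e15}) becomes equivalent to
$$
A^{*2}A-(q^2+q^{-2})A^*AA^*+AA^{*2}+(q^2-q^{-2})^2A=\omega A^*+\eta^* I,
$$
which is precisely the Askey-Wilson relation (\ref{e20}) with $\beta=q^2+q^{-2}$, $\gamma=\gamma^*=0$, $\varrho^*=-(q^2-q^{-2})^2$ and with $\omega,\eta^*$ as in Lemma~\ref{lem2.9}. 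Hence (\ref{e15}) holds by Lemma~\ref{thm1.6}.

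An entirely analogous calculation, beginning from $qA^\varepsilon A-q^{-1}AA^\varepsilon$, reduces (\ref{e16}) to the Askey-Wilson relation (\ref{e19}) with the same $\beta,\gamma,\gamma^*$, with $\varrho=-(q^2-q^{-2})^2$, and with $\omega,\eta$ as in Lemma~\ref{lem2.9}. The main obstacle is purely bookkeeping: one must confirm that the three numerical identifications
$$
(q^2-q^{-2})^2\alpha_1=\eta^*,\qquad (q^2-q^{-2})^2\alpha_2=\eta,\qquad -(q-q^{-1})^2(q+q^{-1})\alpha_3=\omega,
$$
where $\alpha_1,\alpha_2,\alpha_3$ denote the right-hand-side scalars of (\ref{e15}), (\ref{e16}), (\ref{e14}) respectively, agree with the values listed in Lemma~\ref{lem2.9}. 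Each of these follows at once after rewriting $(q^2-q^{-2})^2/(q+q^{-1})=(q-q^{-1})(q^2-q^{-2})$, which shows the cyclic pattern of the three relations is built into the symmetric form of $\omega,\eta,\eta^*$. Once these identities are in place, existence of $A^\varepsilon$ is immediate from the Askey-Wilson relations, completing the proof.
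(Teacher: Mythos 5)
Your proposal is correct and follows essentially the same route as the paper: define $A^\varepsilon$ from (\ref{e14}), observe this forces uniqueness, then eliminate $A^\varepsilon$ from (\ref{e15}) and (\ref{e16}) to recover the Askey--Wilson relations (\ref{e19}), (\ref{e20}) with the parameters of Lemma~\ref{lem2.9}, which hold by Lemmas~\ref{thm1.6} and~\ref{lem2.9}. The only difference is that you carry out the bookkeeping explicitly, which the paper leaves to the reader.
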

\begin{proof}
Define $A^\varepsilon$ such that (\ref{e14}) holds. We show that $A^\varepsilon$ satisfies (\ref{e15}), (\ref{e16}). In these lines eliminate $A^\varepsilon$ using (\ref{e14}), and consider the resulting equations in $A,$ $A^*.$ These equations are the Askey-Wilson relations (\ref{e19}), (\ref{e20}) using the parameters from Lemma~\ref{lem2.9}. These equations hold by Lemma~\ref{lem2.9}. We have shown that there exists $A^\varepsilon$ that satisfies (\ref{e15})--(\ref{e14}). It is clear from (\ref{e14}) that $A^\varepsilon$ is unique.
\end{proof}

We refer to (\ref{e15})--(\ref{e14}) as the {\it $\mathbb{Z}_3$-symmetric Askey-Wilson relations.}


\section{Leonard triples and Leonard triple systems}
Motivated by Theorem~\ref{thm2.3} we now consider the notion of a Leonard triple introduced by Curtin \cite{cur2007}. Until further notice assume $d\geq 0.$

\begin{defn}(\cite[Definition 1.2]{cur2007}).\label{defn1.5}
By a {\it Leonard triple} on $V$ we mean an ordered triple of linear transformations $(A,A^*,A^\varepsilon)$ in ${\rm End}(V)$ such that for each $B\in\{A,A^*,A^\varepsilon\}$ there exists a basis for $V$ with respect to which the matrix representing $B$ is diagonal and the matrices representing the other two linear transformations are irreducible tridiagonal.
\end{defn}

We now define a Leonard triple system.

\begin{defn}\label{defn1.6}
By a {\it Leonard triple system} on $V$ we mean a sequence $\Psi=(A;\{E_i\}_{i=0}^d;A^*;\{E_i^*\}_{i=0}^d;$ $A^\varepsilon;\{E_i^\varepsilon\}_{i=0}^d)$ that satisfies (i)--(vii) below.
\begin{enumerate}
\item[(i)] Each of $A,A^*,A^\varepsilon$ is a multiplicity-free element in ${\rm End}(V).$

\item[(ii)] $\{E_i\}_{i=0}^d$ is an ordering of the primitive idempotents of $A.$

\item[(iii)] $\{E_i^*\}_{i=0}^d$ is an ordering of the primitive idempotents of $A^*.$

\item[(iv)] $\{E_i^\varepsilon\}_{i=0}^d$ is an ordering of the primitive idempotents of $A^\varepsilon.$

\item[(v)] For $B\in\{A^*,A^\varepsilon\},$
$$
E_iBE_j=\left\{
\begin{array}{ll}
0 \qquad &\hbox{if $|i-j|>1,$}\\
\not=0 \qquad &\hbox{if $|i-j|=1$}
\end{array}
\right. \qquad \qquad \hbox{$(0\leq i,j\leq d)$.}
$$

\item[(vi)] For $B\in\{A,A^\varepsilon\},$
$$
E_i^*BE_j^*=\left\{
\begin{array}{ll}
0 \qquad &\hbox{if $|i-j|>1,$}\\
\not=0 \qquad &\hbox{if $|i-j|=1$}
\end{array}
\right. \qquad \qquad \hbox{$(0\leq i,j\leq d)$.}
$$

\item[(vii)] For $B\in\{A,A^*\},$
$$
E_i^\varepsilon BE_j^\varepsilon=\left\{
\begin{array}{ll}
0 \qquad &\hbox{if $|i-j|>1,$}\\
\not=0 \qquad &\hbox{if $|i-j|=1$}
\end{array}
\right. \qquad \qquad \hbox{$(0\leq i,j\leq d)$.}
$$
\end{enumerate}
We refer to $d$ as the {\it diameter} of $\Psi$ and say $\Psi$ is {\it over $\mathbb{K}.$}
\end{defn}

\begin{lem}\label{lem1.1}
Let
$\Psi=(A;\{E_i\}_{i=0}^d;A^*;\{E_i^*\}_{i=0}^d;A^\varepsilon;\{E_i^\varepsilon\}_{i=0}^d)$
denote a sequence of linear transformations in ${\rm End}(V).$ Then $\Psi$ is
a Leonard triple system on $V$ if and only if the following
{\rm (i)}--{\rm (iii)} hold.
\begin{enumerate}
\item[{\rm (i)}]  $(A;\{E_i\}_{i=0}^d;A^*;\{E_i^*\}_{i=0}^d)$ is a Leonard system on $V.$

\item[{\rm (ii)}]  $(A^*;\{E_i^*\}_{i=0}^d;A^\varepsilon;\{E_i^\varepsilon\}_{i=0}^d)$ is a Leonard system on $V.$

\item[{\rm (iii)}]  $(A^\varepsilon;\{E_i^\varepsilon\}_{i=0}^d;A;\{E_i\}_{i=0}^d)$ is a Leonard system on $V.$
\end{enumerate}
\end{lem}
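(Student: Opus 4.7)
The plan is to establish the equivalence by matching up the seven conditions of Definition~\ref{defn1.6} against the five defining conditions of a Leonard system (Definition~\ref{defn1.2}) applied separately to each of the three ordered quadruples listed in (i), (ii), (iii). There is no substantive content beyond this matching, since the definitions were engineered to be equivalent; the task is just careful bookkeeping.

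For the forward direction, assume $\Psi$ is a Leonard triple system. To verify that $(A;\{E_i\}_{i=0}^d;A^*;\{E_i^*\}_{i=0}^d)$ is a Leonard system, the multiplicity-free part of Definition~\ref{defn1.2}(i) comes from the $A$ and $A^*$ components of Definition~\ref{defn1.6}(i); conditions (ii), (iii) of Definition~\ref{defn1.2} are the corresponding clauses of Definition~\ref{defn1.6}(ii), (iii); condition (iv) of Definition~\ref{defn1.2} is the $B=A^*$ instance of Definition~\ref{defn1.6}(v); and condition (v) of Definition~\ref{defn1.2} is the $B=A$ instance of Definition~\ref{defn1.6}(vi). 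The quadruple $(A^*;\{E_i^*\}_{i=0}^d;A^\varepsilon;\{E_i^\varepsilon\}_{i=0}^d)$ is handled analogously, drawing on the $A^*$ and $A^\varepsilon$ parts of Definition~\ref{defn1.6}(i), then Definition~\ref{defn1.6}(iii), (iv) for the idempotent orderings, the $B=A^\varepsilon$ instance of Definition~\ref{defn1.6}(vi) for condition (iv) of Definition~\ref{defn1.2}, and the $B=A^*$ instance of Definition~\ref{defn1.6}(vii) for condition (v). Finally $(A^\varepsilon;\{E_i^\varepsilon\}_{i=0}^d;A;\{E_i\}_{i=0}^d)$ uses the $B=A$ instance of Definition~\ref{defn1.6}(vii) and the $B=A^\varepsilon$ instance of Definition~\ref{defn1.6}(v).

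For the converse, assume (i), (ii), (iii) hold. Each of $A$, $A^*$, $A^\varepsilon$ appears as the first (and hence multiplicity-free) linear transformation in at least one of the three Leonard systems, giving Definition~\ref{defn1.6}(i); likewise the ordering conditions (ii)--(iv) of Definition~\ref{defn1.6} follow since each idempotent sequence is named as the primitive-idempotent ordering of its linear transformation in at least one of the three Leonard systems. The six tridiagonal conditions of Definition~\ref{defn1.6}(v)--(vii) split into pairs: the $B=A^*$ case of (v) and the $B=A$ case of (vi) come from Leonard system (i); the $B=A^\varepsilon$ case of (vi) and the $B=A^*$ case of (vii) come from Leonard system (ii); the $B=A$ case of (vii) and the $B=A^\varepsilon$ case of (v) come from Leonard system (iii).

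The only potential pitfall is keeping track of which pair of $E_i B E_j$ conditions is supplied by which Leonard system; one should tabulate the six $(B,\text{surrounding idempotent})$ combinations in Definition~\ref{defn1.6}(v)--(vii) and check that each is covered exactly once by conditions (iv), (v) of Definition~\ref{defn1.2} applied to the three ordered quadruples. I do not foresee any real obstacle beyond this tabulation.
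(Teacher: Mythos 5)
Your proposal is correct and is simply a fully spelled-out version of the paper's one-line proof, which reads ``Immediate from Definitions~\ref{defn1.2} and \ref{defn1.6}.'' The bookkeeping you carry out is exactly what the paper leaves to the reader, and your matching of the six $E_iBE_j$ conditions to the three Leonard systems is accurate.
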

\begin{proof}
Immediate from Definitions~\ref{defn1.2} and \ref{defn1.6}.
\end{proof}

\begin{defn}\label{nota3.1}
Let
$\Psi=(A;\{E_i\}_{i=0}^d;A^*;\{E_i^*\}_{i=0}^d;A^\varepsilon;\{E_i^\varepsilon\}_{i=0}^d)$
denote a Leonard triple system on $V.$ For $0\leq i\leq d$ let
$\theta_i,$ $\theta_i^*,$ $\theta_i^\varepsilon$ denote the
eigenvalues of $A,$ $A^*,$ $A^\varepsilon$ associated with $E_i,$
$E_i^*,$ $E_i^\varepsilon$ respectively. We call
$\{\theta_i\}^d_{i=0},$ $\{\theta_i^*\}^d_{i=0},$
$\{\theta_i^\varepsilon\}^d_{i=0}$ the {\it first, second,
third eigenvalue sequences} of $\Psi.$
\end{defn}

Let $\Psi=(A;\{E_i\}_{i=0}^d;A^*;\{E_i^*\}_{i=0}^d;A^\varepsilon;\{E_i^\varepsilon\}_{i=0}^d)$ denote a Leonard triple system on $V.$ Observe that each of the following five sequences is a Leonard triple system on $V.$
\begin{align*}
\Psi^*&:=(A^*; \{E_i^*\}_{i=0}^d; A; \{E_i\}_{i=0}^d; A^\varepsilon; \{E_i^\varepsilon\}_{i=0}^d), \\
\Psi^\varepsilon&:=(A^\varepsilon; \{E_i^\varepsilon\}_{i=0}^d; A^*; \{E_i^*\}_{i=0}^d; A; \{E_i\}_{i=0}^d),\\
\Psi^\downharpoonright&:=(A; \{E_i\}_{i=0}^d; A^*;
\{E_i^*\}_{i=0}^d; A^\varepsilon;
\{E_{d-i}^\varepsilon\}_{i=0}^d),\\
\Psi^\downarrow&:=(A; \{E_i\}_{i=0}^d; A^*; \{E_{d-i}^*\}_{i=0}^d; A^\varepsilon; \{E_i^\varepsilon\}_{i=0}^d),\\
\Psi^\Downarrow&:=(A; \{E_{d-i}\}_{i=0}^d; A^*; \{E_i^*\}_{i=0}^d;
A^\varepsilon; \{E_i^\varepsilon\}_{i=0}^d).
\end{align*}
Viewing $*,$ $\varepsilon,$ $\downharpoonright,$ $\downarrow,$
$\Downarrow$ as permutations on the set of all Leonard triple
systems,
\begin{align}
*^2=\varepsilon^2=\hspace{0.1cm} \downharpoonright^2\hspace{0.1cm} =\hspace{0.1cm} \downarrow^2\hspace{0.1cm} =\hspace{0.1cm} \Downarrow^2\hspace{0.1cm} =1, \qquad \qquad \qquad \quad~\label{r2.1}\\
*~\varepsilon~*=\varepsilon*\varepsilon, \qquad \downarrow\downharpoonright \hspace{0.1cm} = \hspace{0.1cm} \downharpoonright\downarrow, \qquad \Downarrow \downharpoonright \hspace{0.1cm} = \hspace{0.1cm} \downharpoonright\Downarrow, \qquad \Downarrow\downarrow \hspace{0.1cm} = \hspace{0.1cm} \downarrow\Downarrow, \qquad \label{r2.2}\\
\downharpoonright *=*\downharpoonright,\qquad \qquad \Downarrow *=* \downarrow, \qquad \qquad \downarrow *=* \Downarrow, \qquad \quad \label{r2.3}\\
\qquad \qquad \downarrow\varepsilon=\varepsilon\downarrow, \qquad
\qquad \Downarrow\varepsilon=\varepsilon\downharpoonright, \qquad
\qquad \downharpoonright\varepsilon=\varepsilon\Downarrow. \qquad \quad
\label{r2.4}
\end{align}

The group generated by symbols $*,$ $\varepsilon,$
$\downharpoonright,$ $\downarrow,$ $\Downarrow$ subject to the
relations (\ref{r2.1})--(\ref{r2.4}) is a semidirect product $(\mathbb{Z}_2)^3\rtimes S_3,$ where $\mathbb{Z}_2$ is the cyclic group of order $2$ and $S_3$ is the symmetric group on three letters. The normal subgroup $(\mathbb{Z}_2)^3$ is generated by
$\downharpoonright,$ $\downarrow,$ $\Downarrow$ and the subgroup $S_3$ is generated by $*,$ $\varepsilon.$ By the above comments $*,$ $\varepsilon,$
$\downharpoonright,$ $\downarrow,$ $\Downarrow$ induce an action
of $(\mathbb{Z}_2)^3\rtimes S_3$ on the set of all Leonard triple
systems. We identify $D_4$ with the subgroup of
$(\mathbb{Z}_2)^3\rtimes S_3$ generated by $*,$ $\downarrow,$
$\Downarrow.$

\medskip

Let $\Psi$ denote a Leonard triple system on $V,$ as in Definition~\ref{nota3.1}.
We now display the three eigenvalue sequences of $\Psi^g$ for $g=*,$ $\varepsilon,$
$\downharpoonright,$ $\downarrow,$ $\Downarrow.$

{\center
\begin{tabular}[t]{c|ccc}
\multirow{2}*{$g$} & \multicolumn{3}{c}{\hbox{the eigenvalue sequences of $\Psi^g$}}\\
\cline{2-4}
&{\hbox{1st}} &{\hbox{2nd}} &{\hbox{3rd}}\\
\hline
$*$ &$\{\theta_i^*\}^d_{i=0}$  &$\{\theta_i\}^d_{i=0}$  &$\{\theta_i^\varepsilon\}^d_{i=0}$\\
$\varepsilon$  &$\{\theta_i^\varepsilon\}^d_{i=0}$  &$\{\theta_i^*\}^d_{i=0}$  &$\{\theta_i\}^d_{i=0}$\\
$\downharpoonright$  &$\{\theta_i\}^d_{i=0}$  &$\{\theta_i^*\}^d_{i=0}$  &$\{\theta_{d-i}^\varepsilon\}^d_{i=0}$\\
$\downarrow$  &$\{\theta_i\}^d_{i=0}$  &$\{\theta_{d-i}^*\}^d_{i=0}$  &$\{\theta_i^\varepsilon\}^d_{i=0}$\\
$\Downarrow$  &$\{\theta_{d-i}\}^d_{i=0}$  &$\{\theta_i^*\}^d_{i=0}$  &$\{\theta_i^\varepsilon\}^d_{i=0}$\\
\end{tabular}
\par
}

\medskip

\begin{defn}\label{defn3.1}
Let $\Psi=(A;\{E_i\}^d_{i=0};$
$A^*;\{E_i^*\}^d_{i=0};A^\varepsilon;\{E_i^\varepsilon\}^d_{i=0})$
denote a Leonard triple system on $V.$  Then the triple
$(A,A^*,A^\varepsilon)$ forms a Leonard triple on $V.$ We say this
triple is {\it associated} with $\Psi.$ Observe that each Leonard
triple system is associated with a unique Leonard triple.
\end{defn}

\begin{defn}\label{defn2.5}
Let $(A,A^*,A^\varepsilon)$ denote a Leonard triple on $V.$ By the
{\it associate class} for $(A,A^*,A^\varepsilon)$ we mean the set
of Leonard triple systems on $V$ which are associated with
$(A,A^*,A^\varepsilon).$ Observe that this associate class contains at least one Leonard triple system $\Psi.$ Moreover the associate class is exactly the $(\mathbb{Z}_2)^3$-orbit containing $\Psi.$
\end{defn}

We now define the notion of isomorphism for Leonard triples and Leonard triple systems. For the rest of this section let $V'$ denote a vector space over $\mathbb{K}$ with dimension $d+1.$

\begin{defn}\label{defn2.2}
Let $(A,A^*,A^\varepsilon)$ denote a Leonard triple on $V.$ Let $(B,B^*,B^\varepsilon)$ denote a Leonard triple on $V'.$ By an {\it isomorphism of Leonard triples from $(A,A^*,A^\varepsilon)$ to $(B,B^*,B^\varepsilon)$} we mean a $\mathbb{K}$-algebra isomorphism $\sigma:{\rm End}(V)\to {\rm End}(V')$ that sends $A, A^*,A^\varepsilon$ to $B,B^*,B^\varepsilon$ respectively. We say $(A,A^*,A^\varepsilon)$ and $(B,B^*,B^\varepsilon)$ are {\it isomorphic} whenever there exists an isomorphism of Leonard triples from $(A,A^*,A^\varepsilon)$ to $(B,B^*,B^\varepsilon).$
\end{defn}

Let $\Psi$ denote the Leonard triple system from Definition~\ref{defn1.6}
and let $\sigma: {\rm End}(V)\rightarrow {\rm End}(V')$ denote a
$\mathbb{K}$-algebra isomorphism. We write $\Psi^\sigma:=
(A^\sigma; \{E_i^\sigma \}^d_{i=0};$
$A^{*\sigma};\{E_i^{*\sigma}\}^d_{i=0}; A^{\varepsilon\sigma};
\{E_i^{\varepsilon\sigma}\}^d_{i=0})$ and observe $\Psi^\sigma$ is
a Leonard triple system on $V'.$

\begin{defn}\label{defn1.7}
Let $\Psi$ denote a Leonard triple system on $V.$ Let $\Psi'$ denote a Leonard triple system on $V'.$ By an {\it isomorphism of Leonard triple systems from $\Psi$ to $\Psi'$} we mean a $\mathbb{K}$-algebra isomorphism $\sigma:{\rm End}(V)\to {\rm End}(V')$ such that $\Psi^\sigma=\Psi'.$ We say $\Psi,$ $\Psi'$  are {\it isomorphic} whenever there exists an isomorphism of Leonard triple systems from $\Psi$ to $\Psi'.$
\end{defn}

\begin{defn}\label{defn2.7}
Let $LTS=LTS(d,\mathbb{K})$ denote the set consisting of all
isomorphism classes of Leonard triple systems over $\mathbb{K}$
that have diameter $d.$
\end{defn}

Observe that the $(\mathbb{Z}_2)^3\rtimes S_3$ action on Leonard
triple systems from above Definition~\ref{defn3.1} induces a
$(\mathbb{Z}_2)^3\rtimes S_3$ action on $LTS.$

\section{Leonard triple systems of QRacah type}

In this section we define a family of Leonard triple systems said
to have QRacah type. We discuss some related concepts.

\begin{defn}\label{defn1.12}
Let $\Psi$
denote a Leonard triple system on $V,$ as in Definition~\ref{nota3.1}. We say that $\Psi$ has {\it
QRacah type} whenever both (i) $d\geq 3;$ (ii) there exist nonzero
$a,b,c,q\in \mathbb{K}$ such that $q^2\not=\pm 1$ and
\begin{align}
\theta_i&=aq^{2i-d}+a^{-1}q^{d-2i} \qquad \qquad \hbox{$(0\leq i\leq d),$} \label{e11}\\
\theta_i^*&=bq^{2i-d}+b^{-1}q^{d-2i} \qquad \qquad \hbox{$(0\leq i\leq d),$} \label{e12}\\
\theta_i^\varepsilon&=cq^{2i-d}+c^{-1}q^{d-2i} \qquad \qquad
\hbox{$(0\leq i\leq d).$} \label{e13}
\end{align}
\end{defn}

Until further notice assume $d\geq 3.$

\begin{defn}\label{defn3.3}
Let $T$-$QRAC=T$-$QRAC(d,\mathbb{K})$ denote the subset of $LTS$
consisting of the isomorphism classes of Leonard triple systems
that have QRacah type.
\end{defn}

\begin{lem}\label{lem1.17}
The set $T$-$QRAC$ is closed under the action of $(\mathbb{Z}_2)^3\rtimes S_3$ on $LTS.$
\end{lem}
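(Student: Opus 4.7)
The plan is to mimic the proof of Lemma~\ref{lem2.7}, which was the analogous closure statement for $QRAC$ under $D_4$. Since the five symbols $*,\varepsilon,\downharpoonright,\downarrow,\Downarrow$ generate $(\mathbb{Z}_2)^3\rtimes S_3$, it suffices to verify that $T$-$QRAC$ is closed under each of these five generators individually. So I would fix a Leonard triple system $\Psi$ whose three eigenvalue sequences are given by (\ref{e11})--(\ref{e13}) for some nonzero $a,b,c,q\in\mathbb{K}$ with $q^2\neq\pm 1$, and check that $\Psi^g$ is again of QRacah type for each such generator $g$.

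The central piece of bookkeeping is the table of eigenvalue sequences displayed immediately above Definition~\ref{defn3.1}. For $g\in\{*,\varepsilon\}$, the three eigenvalue sequences of $\Psi^g$ are obtained from those of $\Psi$ purely by permutation, so direct inspection of (\ref{e11})--(\ref{e13}) shows that $\Psi^*$ and $\Psi^\varepsilon$ inherit the QRacah form with parameter triples $(b,a,c;q)$ and $(c,b,a;q)$ respectively. This disposes of the $S_3$-factor of the group.

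For $g\in\{\downharpoonright,\downarrow,\Downarrow\}$, exactly one of the three eigenvalue sequences is replaced by its reverse, while the other two are unchanged. Here Lemma~\ref{lem1.14} is the essential input: it shows that if $\{\theta_i\}^d_{i=0}$ is of QRacah form with parameter $a$, then $\{\theta_{d-i}\}^d_{i=0}$ is again of QRacah form, now with parameter $a^{-1}$. Applying this to the appropriate slot in each case shows that $\Psi^{\downharpoonright},\Psi^{\downarrow},\Psi^{\Downarrow}$ have QRacah type with parameter triples $(a,b,c^{-1};q)$, $(a,b^{-1},c;q)$ and $(a^{-1},b,c;q)$ respectively.

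There is no genuine obstacle in this proof; it is a pure bookkeeping exercise built on the table of eigenvalue sequences and Lemma~\ref{lem1.14}. The only point to be slightly careful about is to confirm that $\{*,\varepsilon,\downharpoonright,\downarrow,\Downarrow\}$ really does generate $(\mathbb{Z}_2)^3\rtimes S_3$, but this is already asserted in the paragraph above the table and is clear from the normality of the $(\mathbb{Z}_2)^3$ subgroup generated by $\downharpoonright,\downarrow,\Downarrow$.
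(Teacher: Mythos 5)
Your proof is correct and is essentially the paper's own argument spelled out: the paper's proof reads ``Immediate from Lemma~\ref{lem1.14} and the table above Definition~\ref{defn3.1},'' and you have simply carried out exactly that bookkeeping generator-by-generator. (A minor remark, not a gap: for $g=*$ you record the new parameter tuple as $(b,a,c;q)$ whereas the action displayed later in Lemma~\ref{lem1.13} gives $(b^{-1},a^{-1},c^{-1};q^{-1})$; these two tuples produce the same eigenvalue sequences, so either serves to witness QRacah type, which is all that closure requires.)
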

\begin{proof}
Immediate from Lemma~\ref{lem1.14} and the table above Definition~\ref{defn3.1}.
\end{proof}

Let $(A,A^*,A^\varepsilon)$ denote a Leonard triple on $V.$ By Definition~\ref{defn2.5} and
Lemma~\ref{lem1.17}, if some associated Leonard triple system has QRacah
type then every associated Leonard triple system has QRacah type; in this
case $(A,A^*,A^\varepsilon)$ is said to have {\it QRacah type}.

\medskip

Given Theorem~\ref{thm2.3}, it is natural to ask whether every
Leonard pair of QRacah type extends to a Leonard triple of QRacah
type. The next two sections are devoted to this issue.

\section{The condition for $A^{\varepsilon}$ to be multiplicity-free}

Let $(A,A^*)$ denote a Leonard pair on $V$ that has QRacah type.
Fix $(a,b,c;q)\in QRAC_{red}$ which corresponds to $(A,A^*),$ and
let $A^\varepsilon \in {\rm End}(V)$ be the corresponding element
from Theorem~\ref{thm2.3}. Let
$\Phi=(A;\{E_i\}^d_{i=0};A^*;\{E_i^*\}^d_{i=0})$ denote the
Leonard system that corresponds to $(a,b,c;q).$ Our next goal is
to find necessary and sufficient conditions for the triple
$(A,A^*,A^\varepsilon)$ to be a Leonard triple. To this end we
first determine when $A^\varepsilon$ is multiplicity-free. We
recall some notation. For any $x,t\in \mathbb{K}$ define
\begin{align*}
(x;t)_n:=(1-x)(1-xt)\cdots(1-xt^{n-1}) \qquad \qquad n=0,1,2,\ldots
\end{align*}
and interpret $(x;t)_0:=1.$

\begin{defn}\label{defn3.2}
Define $M$ to be the upper triangular matrix in ${\rm Mat}_{d+1}(\mathbb{K})$ with entries
\begin{align}\label{e4.4}
M_{ij}:=(-1)^ib^{-i}c^{j-i}q^{i^2+(d-2i)j}~
\frac{(q^{2i+2};q^2)_{j-i}(q^{2i-2d};q^2)_{j-i}(a^{-1}b^{-1}c^{-1}q^{d-2j+1};q^2)_{j-i}}{(q^2;q^2)_{j-i}}
\end{align}
for $0\leq i\leq j\leq d.$ Note that the diagonal entries of $M$
are
\begin{align}\label{e4.5}
M_{ii}=(-1)^ib^{-i}q^{i(d-i)} \qquad \qquad \hbox{$(0\leq i\leq
d).$}
\end{align}
These entries are nonzero so $M$ is invertible. For notational convenience define
$M_{ij}=0$ if $i$ or $j$ is among $-1,~d+1.$
\end{defn}

\begin{defn}\label{defn2.1}
Define a map $\rho:{\rm End}(V)\rightarrow {\rm Mat}_{d+1}(\mathbb{K})$ by
\begin{align*}
X^\rho:=M^{-1}X^\natural M\qquad \quad \hbox{for all $X\in {\rm End}(V),$}
\end{align*}
where $\natural$ is the natural map for $\Phi$ from Definition~\ref{defn1.17}. Observe that $\rho$ is a $\mathbb{K}$-algebra isomorphism.
\end{defn}

To evaluate $A^{\varepsilon\rho}$ we need some lemmas.

\begin{lem}\label{lem3.2}
For $0\leq i\leq j\leq d$ with $(i,j)\not=(0,d)$ we have
\begin{align}
M_{i-1,j}&=a^{-1}q^{i+j-d-1}~\frac{(q^{i}-q^{-i})(q^{d-i+1}-q^{i-d-1})(abc-q^{d-2i+1})}{q^{i-j-1}-q^{j-i+1}}~M_{ij}, \label{e3.3}\\
M_{i,j+1}&=a^{-1}b^{-1}q^{j-i}~\frac{(q^{j+1}-q^{-j-1})(q^{d-j}-q^{j-d})(abc-q^{d-2j-1})}{q^{j-i+1}-q^{i-j-1}}~M_{ij}. \label{e3.4}
\end{align}
\end{lem}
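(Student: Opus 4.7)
The plan is to verify both identities by direct computation of the ratios $M_{i-1,j}/M_{ij}$ and $M_{i,j+1}/M_{ij}$ from the explicit formula in Definition~\ref{defn3.2}, using only two elementary $q$-Pochhammer rules:
\begin{align*}
(x;q^2)_{n+1} = (1-x)(xq^2;q^2)_n, \qquad (x;q^2)_{n+1} = (x;q^2)_n(1 - xq^{2n}).
\end{align*}
No conceptual argument is needed; each identity is an assertion about a ratio of two entries of $M$ differing by a shift in one index, and the proof reduces to a careful bookkeeping exercise.

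For identity (\ref{e3.3}), I first compute the prefactor ratio induced on $(-1)^i b^{-i} c^{j-i} q^{i^2+(d-2i)j}$ by the shift $i \mapsto i-1$, which yields $-bc \cdot q^{2j-2i+1}$. For the three numerator Pochhammer symbols I apply the appropriate rule above to extract a single extra factor in each: the first rule applied to $(q^{2i+2}; q^2)_{j-i}$ and $(q^{2i-2d}; q^2)_{j-i}$ yields factors $1-q^{2i}$ and $1-q^{2i-2d-2}$ respectively, while the second rule applied to the third Pochhammer (whose starting parameter does not depend on $i$) yields $1 - a^{-1}b^{-1}c^{-1}q^{d-2i+1}$. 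The denominator $(q^2;q^2)_{j-i}$ contributes the factor $1 - q^{2(j-i+1)}$. I then convert every factor of the form $1 - q^{2n}$ into $-q^n(q^n - q^{-n})$, use the elementary rearrangement
$$-bc\bigl(1 - a^{-1}b^{-1}c^{-1}q^{d-2i+1}\bigr) = -a^{-1}(abc - q^{d-2i+1}),$$
and collect powers of $q$ to recover the right-hand side of (\ref{e3.3}).

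For identity (\ref{e3.4}) the strategy is identical, but with one new wrinkle: in the third Pochhammer $(a^{-1}b^{-1}c^{-1}q^{d-2j+1}; q^2)_{j-i}$, the starting parameter itself depends on $j$. Under $j \mapsto j+1$ it becomes $a^{-1}b^{-1}c^{-1}q^{d-2j-1} = q^{-2} \cdot a^{-1}b^{-1}c^{-1}q^{d-2j+1}$, so after re-expressing one applies the first rule from the reverse end to extract the factor $1 - a^{-1}b^{-1}c^{-1}q^{d-2j-1}$. The other two numerator Pochhammer symbols contribute $1 - q^{2j+2}$ and $1 - q^{2j-2d}$ via the second rule, and the denominator contributes $1 - q^{2(j-i+1)}$. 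The analogous identity
$$c\bigl(1 - a^{-1}b^{-1}c^{-1}q^{d-2j-1}\bigr) = a^{-1}b^{-1}(abc - q^{d-2j-1}),$$
together with the $1 - q^{2n} = -q^n(q^n - q^{-n})$ conversion, produces the right-hand side of (\ref{e3.4}).

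The main (and essentially only) obstacle is bookkeeping: tracking the accumulated powers of $q$ and sign changes correctly, and in particular handling the parameter shift in the third $q$-Pochhammer under $j \mapsto j+1$. No deeper idea is required.
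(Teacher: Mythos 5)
Your proof is correct and takes exactly the same approach as the paper, which simply says ``Use Definition~\ref{defn3.2}''; you have filled in the bookkeeping that the paper leaves implicit, and I verified that the prefactor ratios, the four Pochhammer ratios in each case, the rearrangements $-bc(1-a^{-1}b^{-1}c^{-1}q^{d-2i+1})=-a^{-1}(abc-q^{d-2i+1})$ and $c(1-a^{-1}b^{-1}c^{-1}q^{d-2j-1})=a^{-1}b^{-1}(abc-q^{d-2j-1})$, and the final collection of powers of $q$ all come out right.

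One small point worth making explicit: when $i=0$ (respectively $j=d$), the left-hand side $M_{i-1,j}$ (respectively $M_{i,j+1}$) is an out-of-range entry set to zero by convention rather than given by the defining formula, so the ratio computation does not literally apply; instead one observes that the right-hand side also vanishes because of the factor $q^{i}-q^{-i}$ (respectively $q^{d-j}-q^{j-d}$). The exclusion $(i,j)\neq(0,d)$ is needed so that the denominator $q^{i-j-1}-q^{j-i+1}$ stays nonzero, since (RQRAC2) only guarantees $q^{2k}\neq 1$ for $1\leq k\leq d$ and $j-i+1=d+1$ would fall outside that range.
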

\begin{proof}
Use Definition~\ref{defn3.2}.
\end{proof}

\begin{lem}\label{lem3.3}
For $0\leq i< j\leq d$ we have
\begin{align}
M_{i+1,j}&=a\hspace{0.45mm}q^{d-i-j}~\frac{q^{i-j}-q^{j-i}}{(q^{i+1}-q^{-i-1})(q^{d-i}-q^{i-d})(abc-q^{d-2i-1})}~M_{ij},\label{e4.2}\\
M_{i,j-1}&=ab\hspace{0.45mm}q^{i-j+1}~\frac{q^{j-i}-q^{i-j}}{(q^{j}-q^{-j})(q^{d-j+1}-q^{j-d-1})(abc-q^{d-2j+1})}~M_{ij}.\label{e4.3}
\end{align}
\end{lem}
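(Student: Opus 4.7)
The plan is to prove both identities by direct substitution into the explicit formula (\ref{e4.4}) from Definition~\ref{defn3.2}, in the same spirit as the proof of Lemma~\ref{lem3.2}. The only real content is telescoping the $q$-Pochhammer symbols in a single index; everything else is bookkeeping of prefactors.

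For (\ref{e4.2}), I would compute the ratio $M_{i+1,j}/M_{ij}$ by comparing the two defining expressions term by term. The non-Pochhammer prefactors combine to
\[
(-1)\, b^{-1} c^{-1} q^{(i+1)^2 - i^2 + (d-2i-2)j - (d-2i)j} \;=\; -b^{-1}c^{-1} q^{2i+1-2j}.
\]
For the Pochhammer parts, the key observation is that shifting the lower index of $M_{ij}$ by $+1$ strips off the first factor from each of
$(q^{2i+2};q^2)_{j-i}$, $(q^{2i-2d};q^2)_{j-i}$, and $(a^{-1}b^{-1}c^{-1}q^{d-2j+1};q^2)_{j-i}$, while also shortening $(q^2;q^2)_{j-i}$ by removing its last factor $1-q^{2(j-i)}$. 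Writing out each stripped factor and rewriting $1-q^{2k} = q^{k}(q^{-k}-q^{k})$ and
$1-a^{-1}b^{-1}c^{-1}q^{d-2i-1} = (abc)^{-1}(abc - q^{d-2i-1})$ converts everything to the symmetric form $(q^m - q^{-m})$ and $(abc - q^{d-2i-1})$ used on the right of (\ref{e4.2}). Collecting powers of $q$ (a short arithmetic check: $2i+1-2j + (j-i) - (i+1) - (i-d) = d-i-j$) produces the stated factor $a\,q^{d-i-j}$.

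For (\ref{e4.3}), the argument is the mirror image. This time I would shift the upper index down by one and compare $M_{i,j-1}/M_{ij}$. The prefactor contribution is
\[
c^{-1} q^{(d-2i)(j-1)-(d-2i)j} \;=\; c^{-1} q^{2i-d}.
\]
In the Pochhammer quotient, each of $(q^{2i+2};q^2)_{j-i}$ and $(q^{2i-2d};q^2)_{j-i}$ loses its \emph{last} factor, namely $1-q^{2j}$ and $1-q^{2j-2d-2}$, while $(a^{-1}b^{-1}c^{-1}q^{d-2j+1};q^2)_{j-i}$ loses its \emph{first} factor $1-a^{-1}b^{-1}c^{-1}q^{d-2j+1}$, and $(q^2;q^2)_{j-i}$ loses its last factor $1-q^{2(j-i)}$. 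Applying the same $1-q^{2k} = q^{k}(q^{-k}-q^{k})$ trick and factoring $(abc)^{-1}$ out of $1 - a^{-1}b^{-1}c^{-1}q^{d-2j+1}$, the various $q$-powers recombine to $ab\,q^{i-j+1}$ and the remaining fractions take exactly the shape on the right of (\ref{e4.3}).

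There is really no single hard step; the main thing to watch is sign conventions and the direction in which each Pochhammer symbol telescopes (first factor vs.\ last factor), which is what distinguishes (\ref{e4.2}) from (\ref{e4.3}) and, earlier, (\ref{e3.3}) from (\ref{e3.4}). As in Lemma~\ref{lem3.2}, once one sets up the accounting carefully the identities fall out after a single line of algebra, so the proof will read simply ``Use Definition~\ref{defn3.2}.''
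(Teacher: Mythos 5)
Your proof is correct, but it takes a different route than the paper. The paper's proof is a one-line shortcut: it observes that replacing $i$ by $i+1$ in (\ref{e3.3}) and solving for $M_{i+1,j}$ gives (\ref{e4.2}) immediately, and likewise replacing $j$ by $j-1$ in (\ref{e3.4}) and solving for $M_{i,j-1}$ gives (\ref{e4.3}). That is, Lemma~\ref{lem3.3} is just Lemma~\ref{lem3.2} read backwards after a reindexing, with no new telescoping needed. You instead recompute the ratios $M_{i+1,j}/M_{ij}$ and $M_{i,j-1}/M_{ij}$ from scratch via Definition~\ref{defn3.2}, which is a valid and essentially equivalent calculation, but it duplicates the Pochhammer bookkeeping already done in Lemma~\ref{lem3.2}; the paper's route is shorter precisely because that bookkeeping is reused rather than redone.

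One small wording slip in your argument for (\ref{e4.2}): you say that shifting $i\mapsto i+1$ ``strips off the first factor from each of'' the three Pochhammer products in the numerator. That is true for $(q^{2i+2};q^2)_{j-i}$ and $(q^{2i-2d};q^2)_{j-i}$, whose bases shift up by $q^2$, but for $(a^{-1}b^{-1}c^{-1}q^{d-2j+1};q^2)_{j-i}$ the base is independent of $i$ and only the length $j-i$ decreases, so it loses its \emph{last} factor, namely $1-a^{-1}b^{-1}c^{-1}q^{d-2i-1}$. Your subsequent computation does in fact use this last factor (and the final exponent check $2i+1-2j+(j-i)-(i+1)-(i-d)=d-i-j$ and constant $a$ come out right), so the slip is only in the prose, not in the arithmetic.
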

\begin{proof}
To verify (\ref{e4.2}), replace $i$ by $i+1$ in (\ref{e3.3}) and solve the resulting equation for $M_{i+1,j}.$ To verify (\ref{e4.3}), replace $j$ by $j-1$ in (\ref{e3.4}) and solve the resulting equation for $M_{i,j-1}.$
\end{proof}

\begin{lem}\label{lem3.0}
The matrix $A^{\varepsilon\natural}\in {\rm Mat}_{d+1}(\mathbb{K})$ is irreducible tridiagonal with entries
\begin{align}
(A^{\varepsilon\natural})_{i,i-1}&=-b^{-1}q^{d-2i+1}, \label{e3.7}\\
(A^{\varepsilon\natural})_{i-1,i}&=a^{-2}b^{-1}(q^{i}-q^{-i})(q^{d-i+1}-q^{i-d-1})(abc-q^{d-2i+1})(abc^{-1}-q^{d-2i+1}) \label{e3.8}
\end{align}
for $1\leq i\leq d$ and
\begin{align}
(A^{\varepsilon\natural})_{ii}&=a^{-1}b^{-1}q^{d-2i}\big(q^{d+1}+q^{-d-1}-q^{d-2i}(q+q^{-1})\big)+(c+c^{-1})q^{d-2i} \qquad \qquad \label{e3.9}
\end{align}
for $0\leq i\leq d.$
\end{lem}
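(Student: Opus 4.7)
The plan is to derive $A^{\varepsilon\natural}$ directly from the defining relation (\ref{e14}) via the natural map. Solving (\ref{e14}) for $A^\varepsilon$ gives
\[
A^\varepsilon \;=\; \lambda\hspace{0.45mm} I \;-\; \frac{qAA^*-q^{-1}A^*A}{q^2-q^{-2}},
\]
where $\lambda$ denotes the scalar appearing on the right of (\ref{e14}). Since $\natural$ is a $\mathbb{K}$-algebra isomorphism (Definition~\ref{defn1.17}),
\[
A^{\varepsilon\natural} \;=\; \lambda\hspace{0.45mm} I \;-\; \frac{qA^\natural A^{*\natural}-q^{-1}A^{*\natural}A^\natural}{q^2-q^{-2}}.
\]
I would then read off $A^\natural$ and $A^{*\natural}$ from the bidiagonal forms displayed in (\ref{e3.12}), namely $A^\natural_{ii}=\theta_i$, $A^\natural_{i,i-1}=1$, and $A^{*\natural}_{ii}=\theta_i^*$, $A^{*\natural}_{i-1,i}=\varphi_i$, with all other entries zero.

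A direct computation of the two products shows each is tridiagonal, with
\[
(A^\natural A^{*\natural})_{i,i+1}=\theta_i\varphi_{i+1},\quad (A^\natural A^{*\natural})_{ii}=\theta_i\theta_i^*+\varphi_i,\quad (A^\natural A^{*\natural})_{i,i-1}=\theta_{i-1}^*,
\]
and symmetric expressions for $A^{*\natural}A^\natural$. Taking the $q$-commutator and dividing by $q^2-q^{-2}$, the subdiagonal entry of $A^{\varepsilon\natural}$ at $(i,i-1)$ becomes $-(q\theta_{i-1}^*-q^{-1}\theta_i^*)/(q^2-q^{-2})$, the superdiagonal entry at $(i-1,i)$ becomes $-\varphi_i(q\theta_{i-1}-q^{-1}\theta_i)/(q^2-q^{-2})$, and the diagonal entry at $(i,i)$ becomes $\lambda-\big((q-q^{-1})\theta_i\theta_i^*+q\varphi_i-q^{-1}\varphi_{i+1}\big)/(q^2-q^{-2})$. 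For (\ref{e3.7}) one checks via (\ref{e8}) that $q\theta_{i-1}^*-q^{-1}\theta_i^*=b^{-1}q^{d-2i+1}(q^2-q^{-2})$, which collapses the subdiagonal to $-b^{-1}q^{d-2i+1}$. For (\ref{e3.8}) one similarly obtains $q\theta_{i-1}-q^{-1}\theta_i=a^{-1}q^{d-2i+1}(q^2-q^{-2})$ from (\ref{e7}), substitutes $\varphi_i$ via (\ref{e9}), and uses the factorization $q^{-i}-abc^{\pm 1}q^{i-d-1}=-q^{i-d-1}(abc^{\pm 1}-q^{d-2i+1})$ together with $q^{i-d-1}-q^{d-i+1}=-(q^{d-i+1}-q^{i-d-1})$ to match the stated expression. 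Irreducibility then follows: the subdiagonal is nonzero by (RQRAC1), and the superdiagonal is nonzero because its four factors $q^i-q^{-i}$, $q^{d-i+1}-q^{i-d-1}$, $abc-q^{d-2i+1}$, $abc^{-1}-q^{d-2i+1}$ are nonzero for $1\le i\le d$ by (RQRAC2) and (RQRAC4).

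The main obstacle is the diagonal formula (\ref{e3.9}). Here one must substitute the explicit expressions for $\theta_i\theta_i^*$, $\varphi_i$, $\varphi_{i+1}$ into $\lambda-\big((q-q^{-1})\theta_i\theta_i^*+q\varphi_i-q^{-1}\varphi_{i+1}\big)/(q^2-q^{-2})$ and simplify a four-factor Laurent polynomial in $a,b,c,q$ down to the compact right-hand side. The cleanest strategy is to expand everything as a Laurent polynomial in the single variable $u:=q^{d-2i}$ (with coefficients in $a,b,c,q$), observing that $\theta_i=au^{-1}q^{-0}+\ldots$, etc., collect the monomials in $u$ and $u^{-1}$, and check that the cross terms involving $ab$, $a^{-1}b^{-1}$, $ab^{-1}$, $a^{-1}b$ cancel against the contributions of $\lambda$, while the terms involving $c,c^{-1}$ and $q^{\pm(d+1)}$ assemble into the stated expression. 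This is a long but purely formal calculation; no further structural input beyond the QRacah parametrization (\ref{e7})--(\ref{e10}) is needed.
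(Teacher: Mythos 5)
Your proposal matches the paper's proof essentially verbatim: both solve (\ref{e14}) for $A^\varepsilon$, push through the natural map to work with the bidiagonal matrices in (\ref{e3.12}), arrive at exactly the intermediate tridiagonal entries (your $\lambda-\big((q-q^{-1})\theta_i\theta_i^*+q\varphi_i-q^{-1}\varphi_{i+1}\big)/(q^2-q^{-2})$ is the paper's $\lambda-\theta_i\theta_i^*/(q+q^{-1})-(q\varphi_i-q^{-1}\varphi_{i+1})/(q^2-q^{-2})$), and then simplify via (\ref{e7})--(\ref{e9}) and deduce irreducibility from (RQRAC1), (RQRAC2), (RQRAC4). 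Correct and the same route.
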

\begin{proof}
The matrix $A^\natural$ (resp. $A^{*\natural}$) is given on the
left (resp. right) in (\ref{e3.12}). 
After a short computation using (\ref{e14}) we find that the matrix $A^{\varepsilon\natural}$ is tridiagonal with entries
\begin{align}
(A^{\varepsilon\natural})_{i,i-1}&=\frac{q^{-1}\theta_{i}^*-q\theta_{i-1}^*}{q^2-q^{-2}}, \label{e3.15}\\
(A^{\varepsilon\natural})_{i-1,i}&=\varphi_i~\frac{q^{-1}\theta_i-q\theta_{i-1}}{q^2-q^{-2}} \label{e3.16}
\end{align}
for $1\leq i\leq d,$ and
\begin{align}
(A^{\varepsilon\natural})_{ii}&=\frac{(a+a^{-1})(b+b^{-1})+(c+c^{-1})(q^{d+1}+q^{-d-1})}{q+q^{-1}}-\frac{\theta_i\theta_i^*}{q+q^{-1}}-\frac{q\varphi_i-q^{-1}\varphi_{i+1}}{q^2-q^{-2}} \label{e3.17}
\end{align}
for $0\leq i\leq d.$ To obtain (\ref{e3.7})--(\ref{e3.9}),
evaluate (\ref{e3.15})--(\ref{e3.17}) using
(\ref{e7})--(\ref{e9}). By (RQRAC1), the right-hand side of (\ref{e3.7}) is nonzero for $1\leq i\leq d.$ By (RQRAC2) and (RQRAC4) the right-hand side of (\ref{e3.8}) is nonzero for $1\leq i\leq d.$ Therefore the tridiagonal matrix $A^{\varepsilon\natural}$ is
irreducible.
\end{proof}

In Lemma~\ref{lem3.0} we gave the entries of $A^{\varepsilon\natural}$. For notational convenience define $(A^{\varepsilon\natural})_{ij}=0$ if $i$ or $j$ is among $-1,$ $d+1.$

\begin{lem}\label{lem3.4}
For $0\leq i\leq j\leq d$ with $(i,j)\not=(0,d)$ we have
\begin{align} (A^{\varepsilon\natural})_{i,i-1}M_{i-1,j}&=a^{-1}b^{-1}q^{j-i}~\frac{(q^{i}-q^{-i})(q^{d-i+1}-q^{i-d-1})(abc-q^{d-2i+1})}{q^{j-i+1}-q^{i-j-1}}~M_{ij}, \label{e3.5}\\
(A^{\varepsilon\natural})_{i,i+1}M_{i+1,j}&=a^{-1}b^{-1}q^{d-i-j}(q^{i-j}-q^{j-i})(abc^{-1}-q^{d-2i-1})M_{ij}. \label{e3.6}
\end{align}
\end{lem}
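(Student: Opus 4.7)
The plan is to derive both identities by direct substitution, combining the entry formulas for $A^{\varepsilon\natural}$ from Lemma~\ref{lem3.0} with the ratio formulas for the matrix $M$ from Lemmas~\ref{lem3.2} and \ref{lem3.3}. Nothing deeper is needed: the right-hand sides of (\ref{e3.5}), (\ref{e3.6}) have been set up so that the factors coming from $M$ and from $A^{\varepsilon\natural}$ cancel against each other in a clean way.

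For (\ref{e3.5}), I would start from $(A^{\varepsilon\natural})_{i,i-1}=-b^{-1}q^{d-2i+1}$ (equation (\ref{e3.7})) and multiply by the expression for $M_{i-1,j}$ given by (\ref{e3.3}):
\[
(A^{\varepsilon\natural})_{i,i-1}M_{i-1,j}=-b^{-1}q^{d-2i+1}\cdot a^{-1}q^{i+j-d-1}\cdot\frac{(q^{i}-q^{-i})(q^{d-i+1}-q^{i-d-1})(abc-q^{d-2i+1})}{q^{i-j-1}-q^{j-i+1}}\,M_{ij}.
\]
Collecting powers of $q$ gives $q^{d-2i+1+i+j-d-1}=q^{j-i}$, and absorbing the overall minus sign into the denominator by flipping $q^{i-j-1}-q^{j-i+1}$ to $q^{j-i+1}-q^{i-j-1}$ yields exactly (\ref{e3.5}).

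For (\ref{e3.6}), I would apply Lemma~\ref{lem3.0} with index $i$ replaced by $i+1$ to obtain
\[
(A^{\varepsilon\natural})_{i,i+1}=a^{-2}b^{-1}(q^{i+1}-q^{-i-1})(q^{d-i}-q^{i-d})(abc-q^{d-2i-1})(abc^{-1}-q^{d-2i-1}),
\]
and then multiply by the expression for $M_{i+1,j}$ from (\ref{e4.2}). The factors $(q^{i+1}-q^{-i-1})$, $(q^{d-i}-q^{i-d})$, and $(abc-q^{d-2i-1})$ appear in both numerator and denominator and cancel outright, leaving
\[
(A^{\varepsilon\natural})_{i,i+1}M_{i+1,j}=a^{-1}b^{-1}q^{d-i-j}(q^{i-j}-q^{j-i})(abc^{-1}-q^{d-2i-1})\,M_{ij},
\]
which is precisely (\ref{e3.6}).

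The only subtlety is keeping track of the index shifts (using Lemma~\ref{lem3.2} for $M_{i-1,j}$ but Lemma~\ref{lem3.3} for $M_{i+1,j}$, and using Lemma~\ref{lem3.0} with index $i$ in the first identity but with $i+1$ in the second) and the boundary cases where some entries are zero. Those boundary situations are handled automatically by the convention $M_{ij}=0$ and $(A^{\varepsilon\natural})_{ij}=0$ when indices leave $\{0,\dots,d\}$, and by the restriction $(i,j)\neq(0,d)$ in (\ref{e3.5}). I do not anticipate any real obstacle; the lemma is essentially a bookkeeping identity packaging two one-line computations, and it is being recorded here so that the subsequent evaluation of $A^{\varepsilon\rho}=M^{-1}A^{\varepsilon\natural}M$ can be carried out by telescoping.
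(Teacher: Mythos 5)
Your proposal is correct and matches the paper's proof exactly: for (\ref{e3.5}) you multiply (\ref{e3.7}) by (\ref{e3.3}), and for (\ref{e3.6}) you multiply the shifted (\ref{e3.8}) by (\ref{e4.2}), with the same cancellations. The one small inaccuracy is your handling of the case $i=j$ in (\ref{e3.6}): that case is not covered by the out-of-range convention (when $i=j<d$ all indices stay in $\{0,\dots,d\}$); rather $M_{i+1,i}=0$ because $M$ is upper triangular and the right-hand side vanishes because $q^{i-j}-q^{j-i}=0$, and since (\ref{e4.2}) is stated only for $i<j$ this case should be checked separately, as the paper does.
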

\begin{proof}
To obtain (\ref{e3.5}) evaluate the left-hand side of (\ref{e3.5}) using (\ref{e3.3}) and (\ref{e3.7}).  Concerning (\ref{e3.6}), first assume $i=j.$ Then (\ref{e3.6}) holds since each side is zero. Next assume $i<j.$ In this case, (\ref{e3.6}) is verified by evaluating the left-hand side using (\ref{e4.2}) and (\ref{e3.8}).
\end{proof}

\begin{prop}\label{prop3.1}
We have
\begin{align}\label{e3.19}
A^{\varepsilon\rho}=\left(
\begin{array}{cccccc}
\theta^\varepsilon_0 & & & & &{\bf 0}\\
1 &\theta^\varepsilon_1 & & & &\\
  &1 &\theta^\varepsilon_2 & & &\\
  & &\cdot &\cdot & &\\
  & & &\cdot &\cdot &\\
{\bf 0} & & & &1 &\theta^\varepsilon_d
\end{array}
\right),
\end{align}
where $\{\theta_i^\varepsilon\}^d_{i=0}$ are from {\rm
(\ref{e13})}.
\end{prop}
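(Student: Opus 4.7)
The plan is to recognize that the definition of $\rho$ reduces the claim to a matrix identity, and then verify that identity entry by entry using the lemmas already established.

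Since $\rho$ is defined by $X^\rho = M^{-1}X^\natural M$, proving (\ref{e3.19}) is equivalent to proving the matrix identity $A^{\varepsilon\natural}M = MN$, where $N$ denotes the lower bidiagonal matrix appearing on the right-hand side of (\ref{e3.19}) (diagonal entries $\theta^\varepsilon_j$, subdiagonal entries $1$). Reading off the $(i,j)$-entry and using that $A^{\varepsilon\natural}$ is tridiagonal, this identity is
$$
(A^{\varepsilon\natural})_{i,i-1}M_{i-1,j}+(A^{\varepsilon\natural})_{ii}M_{ij}+(A^{\varepsilon\natural})_{i,i+1}M_{i+1,j}=\theta_j^\varepsilon M_{ij}+M_{i,j+1},
$$
for $0\leq i,j\leq d$, with the convention that out-of-range entries vanish.

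I would split the verification according to the relative size of $i$ and $j$. For $i>j+1$ every $M$-entry appearing in the equation vanishes and the identity reads $0=0$. For $i=j+1$ the equation collapses to $(A^{\varepsilon\natural})_{j+1,j}M_{jj}=M_{j+1,j+1}$, which is a one-line check using (\ref{e3.7}) and (\ref{e4.5}). For the main case $i\leq j$, I would use Lemma~\ref{lem3.4} to replace $(A^{\varepsilon\natural})_{i,i-1}M_{i-1,j}$ and $(A^{\varepsilon\natural})_{i,i+1}M_{i+1,j}$ by explicit scalar multiples of $M_{ij}$ (via (\ref{e3.5}) and (\ref{e3.6})), and use Lemma~\ref{lem3.2} (eqn (\ref{e3.4})) to replace $M_{i,j+1}$ by a scalar multiple of $M_{ij}$. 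Substituting the value of $(A^{\varepsilon\natural})_{ii}$ from (\ref{e3.9}) and $\theta_j^\varepsilon=cq^{2j-d}+c^{-1}q^{d-2j}$, and cancelling the nonzero factor $M_{ij}$, the identity becomes a purely scalar relation in $a,b,c,q$ with parameters $i,j,d$.

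The main obstacle will be this final scalar identity. Its terms naturally organize into three groups according to which ``sector'' of Lemmas~\ref{lem3.2}--\ref{lem3.4} they come from: a term carrying the factor $abc-q^{d-2i+1}$, a term carrying $abc^{-1}-q^{d-2i-1}$, and a term carrying $abc-q^{d-2j-1}$; on the other side, the $c+c^{-1}$ piece of $\theta_j^\varepsilon$ and the $c+c^{-1}$ piece hidden inside $(A^{\varepsilon\natural})_{ii}$ combine with $(A^{\varepsilon\natural})_{ii}$'s remaining $q$-polynomial part. I would compute the coefficient of $c$, the coefficient of $c^{-1}$, and the $c$-free part separately; each should simplify by elementary $q$-identities (telescoping of $(q^r-q^{-r})$ factors and the factorization $q^{2j}-q^{2i}=(q^{j-i}-q^{i-j})(q^{i+j})$ type relations) to give the required equality. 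The calculation is mechanical but requires careful bookkeeping of the $q$-exponents, which is why splitting by powers of $c$ is essential.
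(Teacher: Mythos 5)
Your approach mirrors the paper's proof: the same reduction to the matrix identity $A^{\varepsilon\natural}M=MN$, the same entry-by-entry equation, the same handling of $i>j+1$ (all summands vanish) and $i=j+1$ (a one-line check from (\ref{e3.7}) and (\ref{e4.5})), and the same use of Lemma~\ref{lem3.4}, equation (\ref{e3.4}), and (\ref{e3.9}) in the main case.

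There is, however, a genuine gap in your case split. Your ``main case $i\le j$'' would include the corner entry $(i,j)=(0,d)$, but Lemma~\ref{lem3.2} and Lemma~\ref{lem3.4} are stated only for $0\le i\le j\le d$ with $(i,j)\ne(0,d)$; the substitutions you propose for $(A^{\varepsilon\natural})_{i,i-1}M_{i-1,j}$ (via (\ref{e3.5})) and $M_{i,j+1}$ (via (\ref{e3.4})) are not licensed there. The exclusion is not cosmetic: those formulas carry the denominator $q^{j-i+1}-q^{i-j-1}$, which can vanish exactly when $j-i+1=d+1$, i.e.\ at $(i,j)=(0,d)$, because (RQRAC2) forbids $q^{2k}=1$ only for $1\le k\le d$ and says nothing about $q^{2d+2}$. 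So your argument, as written, would divide by zero in a legitimate subfamily of parameters. The paper treats $(i,j)=(0,d)$ as a separate fourth case: with $M_{-1,d}=0$ and $M_{0,d+1}=0$ by convention, equation (\ref{e3.2}) collapses to $(A^{\varepsilon\natural})_{00}M_{0d}+(A^{\varepsilon\natural})_{01}M_{1d}=\theta_d^\varepsilon M_{0d}$, which is then verified directly from (\ref{e4.2}) at $(i,j)=(0,d)$ together with (\ref{e3.8}) at $i=1$ and (\ref{e3.9}) at $i=0$. You need to add this case; otherwise the structure and ingredients of your argument match the paper's.
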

\begin{proof}
Let $B$ denote the matrix on the right in (\ref{e3.19}). To show
that $A^{\varepsilon\rho}=B,$ it suffices to show that
$A^{\varepsilon\natural} M=MB.$ To do this, for $0\leq i,j\leq d$
we show that the $(i,j)$-entry of $A^{\varepsilon\natural} M$
equals the $(i,j)$-entry of $MB.$ In other words, it suffices to
show
\begin{align}\label{e3.2}
(A^{\varepsilon\natural})_{i,i-1}M_{i-1,j}+(A^{\varepsilon\natural})_{ii}M_{ij}+(A^{\varepsilon\natural})_{i,i+1}M_{i+1,j}=\theta^\varepsilon_jM_{ij}+M_{i,j+1}.
\end{align}
To verify (\ref{e3.2}) we consider the following four cases:
$$
\begin{array}{lll}
\hbox{(I) $j-i<-1;$} \qquad \qquad &\hbox{(II) $j-i=-1;$}\\
\hbox{(III) $j-i\geq 0$ and $(i,j)\not=(0,d);$} \qquad \qquad
&\hbox{(IV) $(i,j)=(0,d).$}
\end{array}
$$

Case (I): Each summand in (\ref{e3.2}) is zero so
(\ref{e3.2}) holds.

Case (II): In this case (\ref{e3.2}) reduces to
\begin{align}\label{e3.18}
(A^{\varepsilon\natural})_{i,i-1} M_{i-1,i-1}=M_{ii}.
\end{align}
Using (\ref{e4.5}) we find
$M_{i-1,i-1}=-b\hspace{0.45mm}q^{2i-d-1}M_{ii}.$ By this and
(\ref{e3.7}) the left-hand side of (\ref{e3.18}) equals
$M_{ii}.$ This shows (\ref{e3.18}) and hence (\ref{e3.2}).

Case (III): Using (\ref{e13}), (\ref{e3.4}) we find the right-hand side of (\ref{e3.2}) equals $M_{ij}$ times
\begin{align}\label{e3.14}
a^{-1}b^{-1}~\frac{q^{2d-2j-1}+q^{2j+1}-q^{2d+1}-q^{-1}}{q^{2j+2}-q^{2i}}+c~\frac{q^{d+2j+2}+q^{2j-d}-q^{d}-q^{2j+2i-d}}{q^{2j+2}-q^{2i}}+c^{-1}q^{d-2j}.
\end{align}
Using (\ref{e3.9}) and Lemma~\ref{lem3.4} we find the left-hand
side of (\ref{e3.2}) equals $M_{ij}$ times (\ref{e3.14}). This shows
(\ref{e3.2}).



Case (IV): In this case (\ref{e3.2}) reduces to
\begin{align}\label{e3.10}
(A^{\varepsilon\natural})_{00} M_{0d}+(A^{\varepsilon\natural})_{01} M_{1d}=\theta^\varepsilon_d M_{0d}.
\end{align}
Put $(i,j)=(0,d)$ in (\ref{e4.2}) and get
\begin{align*}
M_{1d}=\frac{a}{(q-q^{-1})(q^{d-1}-abc)}~M_{0d}.
\end{align*}
Using this along with (\ref{e3.8}) at $i=1$ and (\ref{e3.9}) at
$i=0,$ we find the left-hand side of (\ref{e3.10}) equals the
right-hand side of (\ref{e3.10}). This shows (\ref{e3.10}) and
hence (\ref{e3.2}). We have verified (\ref{e3.2}) in each of the
cases (I)--(IV). The result follows.
\end{proof}

\begin{cor}\label{cor3.1}
Let $\{\theta_i^\varepsilon\}^d_{i=0}$ denote the scalars from {\rm (\ref{e13})}. Then the roots of the
characteristic polynomial of $A^\varepsilon$ are
$\{\theta_i^\varepsilon\}^d_{i=0}.$
\end{cor}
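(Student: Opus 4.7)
The plan is to deduce the corollary as an immediate consequence of Proposition~\ref{prop3.1}. By Definition~\ref{defn2.1} the map $\rho:{\rm End}(V)\to {\rm Mat}_{d+1}(\mathbb{K})$ is a $\mathbb{K}$-algebra isomorphism, so $A^\varepsilon$ and $A^{\varepsilon\rho}$ have the same characteristic polynomial. (Alternatively, $A^{\varepsilon\rho}=M^{-1}A^{\varepsilon\natural}M$ is similar to $A^{\varepsilon\natural}$, which represents $A^\varepsilon$ with respect to a basis of $V$; so $A^{\varepsilon\rho}$ and $A^\varepsilon$ have equal characteristic polynomials.)

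By Proposition~\ref{prop3.1} the matrix $A^{\varepsilon\rho}$ is lower triangular, with diagonal entries $\theta_0^\varepsilon,\theta_1^\varepsilon,\ldots,\theta_d^\varepsilon$. Hence its characteristic polynomial equals $\prod_{i=0}^{d}(x-\theta_i^\varepsilon)$, whose roots are exactly $\{\theta_i^\varepsilon\}_{i=0}^d$.

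The proof is essentially one line; no obstacle is anticipated, as all substantive work has already been carried out in Proposition~\ref{prop3.1}.
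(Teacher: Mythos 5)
Your proposal is correct and matches the paper's argument: the paper simply states that the corollary is immediate from Proposition~\ref{prop3.1}, and you supply exactly the (routine) details that make this so.
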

\begin{proof}
Immediate from Proposition~\ref{prop3.1}.
\end{proof}

\begin{cor}\label{thm3.1}
Let $\{\theta_i^\varepsilon\}^d_{i=0}$ denote the scalars from {\rm (\ref{e13})}. Then the  following {\rm (i)}--{\rm (iii)} are equivalent.
\begin{enumerate}
\item[{\rm (i)}] $A^\varepsilon$ is multiplicity-free.

\item[{\rm (ii)}] $\{\theta_i^\varepsilon\}^d_{i=0}$ are mutually
distinct.

\item[{\rm (iii)}] $c^2$ is not among
$q^{2d-2},q^{2d-4},\ldots,q^{2-2d}.$
\end{enumerate}
\end{cor}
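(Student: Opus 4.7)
The proof is a short deduction from results already established. Here is the plan.

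For (i) $\Leftrightarrow$ (ii): Recall that $A^\varepsilon$ is multiplicity-free, by definition, when it has $d+1$ distinct eigenvalues, i.e.\ when its characteristic polynomial has $d+1$ distinct roots. By Corollary~\ref{cor3.1} those roots are exactly $\{\theta_i^\varepsilon\}_{i=0}^d$, so (i) and (ii) say the same thing.

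For (ii) $\Leftrightarrow$ (iii): Observe that the scalars $\theta_i^\varepsilon = cq^{2i-d} + c^{-1}q^{d-2i}$ have exactly the form (\ref{e1}) from Section~4, with the role of $a$ there now played by $c$. Thus Lemma~\ref{lem1.6}, applied with $a$ replaced by $c$, tells us that $\{\theta_i^\varepsilon\}_{i=0}^d$ are mutually distinct if and only if the two conditions
\begin{align*}
q^{2i} \neq 1 \quad (1 \leq i \leq d), \qquad c^2 \neq q^{2d-2i} \quad (1 \leq i \leq 2d-1)
\end{align*}
both hold. The first condition is (RQRAC2), which holds since $(a,b,c;q) \in QRAC_{red}$. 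As $i$ ranges from $1$ to $2d-1$, the exponent $2d-2i$ runs through $2d-2, 2d-4, \ldots, 2-2d$, so the second condition is precisely (iii). This gives (ii) $\Leftrightarrow$ (iii).

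There is no hard step here; the only thing to notice is that the defining formula (\ref{e13}) for $\theta_i^\varepsilon$ matches the hypothesis of Lemma~\ref{lem1.6} verbatim after relabeling $a \mapsto c$, so the criterion transfers, and (RQRAC2) disposes of the $q$-condition for free.
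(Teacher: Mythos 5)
Your proof is correct and matches the paper's argument exactly: the paper also deduces (i) $\Leftrightarrow$ (ii) from Corollary~\ref{cor3.1} and (ii) $\Leftrightarrow$ (iii) from Lemma~\ref{lem1.6} together with (RQRAC2). The only difference is that you spell out the relabeling $a\mapsto c$ and the range of exponents, which the paper leaves implicit.
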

\begin{proof}
(i) $\Leftrightarrow$ (ii): By Corollary~\ref{cor3.1}.

(ii) $\Leftrightarrow$ (iii): By Lemma~\ref{lem1.6} and (RQRAC2).
\end{proof}

\section{The condition for $(A,A^*,A^\varepsilon)$ to be a Leonard triple}
Let $(A,A^*)$ denote a Leonard pair on $V$ that has QRacah type.
Fix $(a,b,c;q)\in QRAC_{red}$ which corresponds to $(A,A^*),$ and
let $A^\varepsilon \in {\rm End}(V)$ be the corresponding element
from Theorem~\ref{thm2.3}. In Corollary~\ref{thm3.1} we found necessary and sufficient conditions for $A^\varepsilon$ to be multiplicity-free. In this section we show that $A^\varepsilon$ is multiplicity-free if and only if $(A,A^*,A^\varepsilon)$ is a Leonard triple.

\begin{prop}\label{prop4.1}
We have
\begin{align}\label{e4.9}
A^{*\rho}=\left(
\begin{array}{cccccc}
\theta_0^* &\varphi_1^\varepsilon & & & &{\bf 0}\\
  &\theta_1^* &\varphi_2^\varepsilon & & &\\
  & &\theta_2^* &\cdot & &\\
  & & &\cdot &\cdot &\\
  & & & &\cdot &\varphi_d^\varepsilon\\
{\bf 0} & & & & &\theta_d^*
\end{array}
\right),
\end{align}
where $\rho$ is from Definition~\ref{defn2.1}, the scalars
$\{\theta_i^*\}^d_{i=0}$ are from {\rm (\ref{e12})} and
\begin{align}\label{e4.10}
\varphi_i^\varepsilon=b^{-1}c^{-1}q^{d+1}(q^{i}-q^{-i})(q^{i-d-1}-q^{d-i+1})(q^{-i}-abcq^{i-d-1})(q^{-i}-a^{-1}bcq^{i-d-1})
\end{align}
for $1\leq i\leq d.$
\end{prop}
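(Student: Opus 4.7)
The plan is to imitate the proof of Proposition~\ref{prop3.1}. Let $B'$ denote the upper bidiagonal matrix on the right-hand side of (\ref{e4.9}). Since $X^\rho=M^{-1}X^\natural M,$ the claim $A^{*\rho}=B'$ is equivalent to the matrix identity $A^{*\natural}M=MB'.$ Reading off entries using the explicit form of $A^{*\natural}$ from (\ref{e3.12}) and the bidiagonal shape of $B',$ the $(i,j)$-entry of $A^{*\natural}M$ equals $\theta_i^*M_{ij}+\varphi_{i+1}M_{i+1,j},$ while the $(i,j)$-entry of $MB'$ equals $\theta_j^*M_{ij}+\varphi_j^\varepsilon M_{i,j-1}.$ Thus the proposition reduces to verifying
\begin{align*}
(\theta_i^*-\theta_j^*)\,M_{ij}+\varphi_{i+1}M_{i+1,j}=\varphi_j^\varepsilon M_{i,j-1}
\qquad (0\leq i,j\leq d),
\end{align*}
where $M_{kj}$ and $M_{ik}$ are taken to be $0$ for $k\in\{-1,d+1\}$ according to Definition~\ref{defn3.2}.

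I would argue case-by-case on the sign of $j-i.$ If $j<i,$ then since $M$ is upper triangular each of the three $M$-entries above is zero, so the identity is trivial. If $j=i,$ then $M_{i+1,i}=M_{i,i-1}=0$ and $\theta_i^*=\theta_j^*,$ so the identity again holds trivially. In contrast to the proof of Proposition~\ref{prop3.1}, no separate $(0,d)$-corner exception is needed here, because both formulas in Lemma~\ref{lem3.3} already hold for the full range $0\leq i<j\leq d.$

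This leaves the main case $0\leq i<j\leq d,$ in which $M_{ij}\neq 0$ by (RQRAC1), (RQRAC2), (RQRAC4). Here I invoke Lemma~\ref{lem3.3} to express $M_{i+1,j}$ and $M_{i,j-1}$ as explicit scalar multiples of $M_{ij};$ dividing the equation above by $M_{ij}$ turns it into a scalar identity. Into that identity I substitute Lemma~\ref{lem1.2} applied to the dual eigenvalue sequence for $\theta_i^*-\theta_j^*,$ the formula (\ref{e9}) for $\varphi_{i+1},$ and the proposed formula (\ref{e4.10}) for $\varphi_j^\varepsilon.$ The main obstacle is the algebraic verification of the resulting scalar identity. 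A key simplification is that the factor $q^{i-j}-q^{j-i}$ appears in every term (once from Lemma~\ref{lem1.2}, once from (\ref{e4.2}), once from (\ref{e4.3})); after cancelling this common factor, the remaining identity becomes a polynomial relation in $a^{\pm 1},b^{\pm 1},c^{\pm 1},q^{\pm i},q^{\pm j},q^{\pm d}$ that is verified by direct expansion, exactly in the spirit of Case (III) of the proof of Proposition~\ref{prop3.1}. The difficulty is purely bookkeeping: keeping track of the $q$-powers and the $(abc)$- and $(a^{-1}bc)$-type factors so that the two sides are seen to agree.
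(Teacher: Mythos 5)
Your proposal is correct and follows essentially the same approach as the paper: reduce to the entrywise identity $(\theta_i^*-\theta_j^*)M_{ij}+\varphi_{i+1}M_{i+1,j}=\varphi_j^\varepsilon M_{i,j-1}$, dispose of the $j\le i$ cases by upper-triangularity, and verify the case $i<j$ via Lemma~\ref{lem1.2}, (\ref{e9}), (\ref{e4.2}), (\ref{e4.3}), (\ref{e4.10}). Your remark that no $(0,d)$-corner exception is needed (since Lemma~\ref{lem3.3} covers all $0\le i<j\le d$) is also correct and is exactly why the paper uses only two cases here instead of the four in Proposition~\ref{prop3.1}.
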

\begin{proof}
Let $B$ denote the matrix on the right in (\ref{e4.9}). To show
that $A^{*\rho}=B,$ it suffices to show that $A^{*\natural} M=MB.$ Recall that $A^{*\natural}$ is the matrix on the right in (\ref{e3.12}).
In order to show $A^{*\natural} M=MB,$ for $0\leq i,j\leq d$ we show that the $(i,j)$-entry
of $A^{*\natural} M$ equals the $(i,j)$-entry of $MB.$ In other
words, it suffices to show
\begin{align}\label{e4.1}
(\theta^*_i-\theta^*_j)M_{i,j}+\varphi_{i+1}M_{i+1,j}=\varphi_j ^\varepsilon M_{i,j-1}.
\end{align}
To verify (\ref{e4.1}) we consider the following two cases: (I)
$j-i\leq 0;$ (II) $j-i>0.$

Case (I): Each summand in (\ref{e4.1}) is zero so
(\ref{e4.1}) holds.

Case (II): Evaluating the left-hand side of (\ref{e4.1}) using
Lemma~\ref{lem1.2} and (\ref{e9}), (\ref{e4.2}) we find that it
equals $M_{ij}$ times
\begin{align}\label{e4.11}
(q^{i-j}-q^{j-i})(b\hspace{0.45mm}q^{i+j-d}-ac^{-1}q^{i-j+1}).
\end{align}
Evaluating the right-hand side of (\ref{e4.1}) using (\ref{e4.3}), (\ref{e4.10}) we find that it also equals $M_{ij}$ times (\ref{e4.11}). Therefore (\ref{e4.1}) holds. We have verified (\ref{e4.1}) in the cases (I), (II). The result follows.
\end{proof}

\begin{lem}\label{lem4.2}
Assume that $A^\varepsilon$ is multiplicity-free. For $0\leq i\leq d$ let $\theta_i^\varepsilon$ denote the scalar from {\rm (\ref{e13})} and let $E_i^\varepsilon$ denote the primitive idempotent of
$A^\varepsilon$ associated with $\theta_i^\varepsilon.$ Then $(A^*;\{E_i^*\}^d_{i=0};A^\varepsilon;\{E_i^\varepsilon\}^d_{i=0})$
is a Leonard system of QRacah type and corresponds to $(b,c,a;q).$
\end{lem}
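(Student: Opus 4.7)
The plan is to first construct the Leonard system $\Phi' := (A^\varepsilon; \{E_i^\varepsilon\}_{i=0}^d; A^*; \{E_i^*\}_{i=0}^d)$ and show it has QRacah type corresponding to $(c,b,a;q)$, and then obtain the desired Leonard system by applying the permutation $*$.

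First I would verify that $(c,b,a;q)\in QRAC_{red}$. Conditions (RQRAC1), (RQRAC2), and the portion of (RQRAC3) concerning $b^2$ are immediate from the hypothesis $(a,b,c;q)\in QRAC_{red}$; the remaining piece of (RQRAC3), that $c^2$ avoids $q^{2d-2},q^{2d-4},\ldots,q^{2-2d}$, is exactly the content of Corollary~\ref{thm3.1} under the multiplicity-free hypothesis on $A^\varepsilon$. Condition (RQRAC4) is symmetric under swapping the roles of $a$ and $c$, hence carries over. By Lemma~\ref{lem1.15}, the tuple $(c,b,a;q)$ determines a parameter array $p'=(\{\theta'_i\}_{i=0}^d,\{\theta'^*_i\}_{i=0}^d,\{\varphi'_i\}_{i=1}^d,\{\phi'_i\}_{i=1}^d)\in PA\text{-}QRAC$. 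A direct substitution in (\ref{e7})--(\ref{e9}) yields $\theta'_i=\theta_i^\varepsilon$ from (\ref{e13}), $\theta'^*_i=\theta_i^*$ from (\ref{e12}), and $\varphi'_i=\varphi_i^\varepsilon$ from (\ref{e4.10}); the last identity uses the symmetries $cba=abc$ and $cba^{-1}=a^{-1}bc$ to match the factors appearing in the two formulas.

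Next I would invoke Lemma~\ref{thm1.5}. By Propositions~\ref{prop3.1} and \ref{prop4.1}, the matrix $A^{\varepsilon\rho}$ is lower bidiagonal with diagonal $\{\theta_i^\varepsilon\}_{i=0}^d$ and subdiagonal entries equal to $1$, while $A^{*\rho}$ is upper bidiagonal with diagonal $\{\theta_i^*\}_{i=0}^d$ and superdiagonal $\{\varphi_i^\varepsilon\}_{i=1}^d$. The required products $(A^{\varepsilon\rho})_{i,i-1}\,(A^{*\rho})_{i-1,i}=\varphi_i^\varepsilon=\varphi'_i$ then match the parameter array $p'$. Lemma~\ref{thm1.5} therefore yields a Leonard system on $\mathbb{K}^{d+1}$ with first element $A^{\varepsilon\rho}$, second element $A^{*\rho}$, and parameter array $p'$. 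Transferring through the $\mathbb{K}$-algebra isomorphism $\rho^{-1}$ produces a Leonard system on $V$ with first element $A^\varepsilon$, second element $A^*$, and the same parameter array $p'$; since $\rho^{-1}$ sends primitive idempotents to primitive idempotents while preserving their associated eigenvalues, the idempotent sequences are precisely $\{E_i^\varepsilon\}_{i=0}^d$ and $\{E_i^*\}_{i=0}^d$. This is the Leonard system $\Phi'$ above, and it has QRacah type corresponding to $(c,b,a;q)$.

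Finally, set $\Psi := \Phi'^*=(A^*;\{E_i^*\}_{i=0}^d;A^\varepsilon;\{E_i^\varepsilon\}_{i=0}^d)$, which is a Leonard system by the action from above Definition~\ref{defn2.3}. By Lemma~\ref{lem2.4} together with (\ref{e4.6}), $\Psi$ corresponds to $(c,b,a;q)^*=(b^{-1},c^{-1},a^{-1};q^{-1})$, which by Lemma~\ref{lem1.8} corresponds to the same element of $PA\text{-}QRAC$ as $(b,c,a;q)$. Hence $\Psi$ has QRacah type and corresponds to $(b,c,a;q)$, as required. The main obstacle in this plan is the explicit parameter matching in the first two steps: one must carefully verify that the formula (\ref{e4.10}) for $\varphi_i^\varepsilon$ coincides with the first split sequence produced by (\ref{e9}) under the substitution $(a,b,c)\mapsto(c,b,a)$, which is where the symmetry of $abc$ under permutation of the three parameters plays its decisive role.
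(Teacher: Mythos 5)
Your proposal is correct and follows essentially the same route as the paper: verify $(c,b,a;q)\in QRAC_{red}$ using Corollary~\ref{thm3.1} and the symmetry of (RQRAC4) under permuting $a,b,c$, assemble the corresponding parameter array, apply Lemma~\ref{thm1.5} to $A^{\varepsilon\rho}$ and $A^{*\rho}$ (Propositions~\ref{prop3.1} and~\ref{prop4.1}), transfer back through $\rho^{-1}$, and finally apply $*$ together with Lemma~\ref{lem2.4} and Lemma~\ref{lem1.8} (the paper cites Corollary~\ref{lem2.8}, which packages the same content) to land on $(b,c,a;q)$. The only cosmetic difference is that you generate the parameter array via Lemma~\ref{lem1.15} and then match components, whereas the paper writes the array down explicitly and observes it lies in $PA$-$QRAC$; these are equivalent.
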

\begin{proof}
By Corollary~\ref{thm3.1}, $c^2$ is not among
$q^{2d-2},q^{2d-4},\ldots,q^{2-2d}.$ By this and since $(a,b,c;q)\in QRAC_{red},$ we have $(x,y,z;q)\in QRAC_{red}$ for any permutation $x,y,z$ of $a,b,c.$ In particular
$(c,b,a;q)\in QRAC_{red}.$ Define
$$
\phi_i^\varepsilon=b^{-1}c\hspace{0.45mm}q^{d+1}(q^i-q^{-i})(q^{i-d-1}-q^{d-i+1})(q^{-i}-abc^{-1}q^{i-d-1})(q^{-i}-a^{-1}bc^{-1}q^{i-d-1})
$$
for $1\leq i\leq d.$ Observe
\begin{align}\label{e3.11}
(\{\theta_i^\varepsilon\}^d_{i=0},\{\theta_i^*\}^d_{i=0},\{\varphi_i^\varepsilon\}^d_{i=1},\{\phi_i^\varepsilon\}^d_{i=1})
\end{align}
is an element of $PA$-$QRAC$ that corresponds to  $(c,b,a;q).$
Note that $A^{\varepsilon\rho}$ is lower bidiagonal by
Proposition~\ref{prop3.1} and $A^{*\rho}$ is upper bidiagonal by
Proposition~\ref{prop4.1}. Moreover
\begin{align*}
(A^{\varepsilon\rho})_{ii}=\theta_i^\varepsilon,\qquad \qquad (A^{*\rho})_{ii}=\theta_i^* \qquad \qquad \hbox{$(0\leq i\leq d),$}\\
(A^{\varepsilon\rho})_{i,i-1}(A^{*\rho})_{i-1,i}=\varphi_i^\varepsilon \qquad \qquad \qquad \hbox{$(1\leq i\leq d).$}
\end{align*}
By this and Lemma~\ref{thm1.5},
$(A^{\varepsilon\rho};\{E_i^{\varepsilon\rho}\}^d_{i=0};A^{*\rho};\{E_i^{*\rho}\}^d_{i=0})$
is a Leonard system of QRacah type that has parameter array
(\ref{e3.11}). Therefore
$(A^{\varepsilon\rho};\{E_i^{\varepsilon\rho}\}^d_{i=0};A^{*\rho};\{E_i^{*\rho}\}^d_{i=0})$
corresponds to $(c,b,a;q).$ Since $\rho$ is a $\mathbb{K}$-algebra
isomorphism
$\Phi=(A^\varepsilon;\{E_i^\varepsilon\}^d_{i=0};A^*;\{E_i^*\}^d_{i=0})$
is a Leonard system of QRacah type that corresponds to
$(c,b,a;q).$ Therefore
$\Phi^*=(A^*;\{E_i^*\}^d_{i=0};A^\varepsilon;\{E_i^\varepsilon\}^d_{i=0})$
is a Leonard system of QRacah type. By Lemma~\ref{lem2.4} the Leonard system
$\Phi^*$ corresponds to $(b^{-1},c^{-1},a^{-1};q^{-1}),$ and also
corresponds to $(b,c,a;q)$ by Corollary~\ref{lem2.8}.
\end{proof}

\begin{lem}\label{lem4.4}
Assume that $A^\varepsilon$ is multiplicity-free. For $0\leq i\leq d$ let $\theta_i^\varepsilon$ denote the scalar from {\rm (\ref{e13})} and let $E_i^\varepsilon$ denote the primitive idempotent of $A^\varepsilon$ associated with $\theta_i^\varepsilon.$ Then  $(A^\varepsilon;\{E_i^\varepsilon\}^d_{i=0};A;\{E_i\}^d_{i=0})$ is a Leonard system of QRacah type and corresponds to $(c,a,b;q).$
\end{lem}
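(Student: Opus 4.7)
The plan is to deduce Lemma~\ref{lem4.4} from Lemma~\ref{lem4.2} by invoking the cyclic $\mathbb{Z}_3$-symmetry of the Askey--Wilson relations from Theorem~\ref{thm2.3}. The main observation is that once Lemma~\ref{lem4.2} is established, the present result follows by permuting the roles of $(A,A^*,A^\varepsilon)$ and the parameters $(a,b,c;q)$ by one cyclic step, so that all the matrix computations of Sections on $M$ and $\rho$ need not be redone.

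First I would invoke Lemma~\ref{lem4.2}: under the given hypothesis that $A^\varepsilon$ is multiplicity-free, the pair $(A^*,A^\varepsilon)$ is a Leonard pair on $V$ of QRacah type that corresponds to $(b,c,a;q).$ Applying Theorem~\ref{thm2.3} to this Leonard pair, with the new parameter tuple $(b,c,a;q),$ produces a unique operator $X\in {\rm End}(V)$ satisfying the three $\mathbb{Z}_3$-symmetric Askey--Wilson relations with the cyclically rotated parameters. The next step is to identify $X=A.$ Indeed, the defining relation for $X$---the analogue of (\ref{e14}) under the substitutions $(A,A^*,A^\varepsilon)\mapsto(A^*,A^\varepsilon,X)$ and $(a,b,c)\mapsto(b,c,a)$---is
\[
\frac{qA^*A^\varepsilon-q^{-1}A^\varepsilon A^*}{q^2-q^{-2}}+X=\frac{(b+b^{-1})(c+c^{-1})+(a+a^{-1})(q^{d+1}+q^{-d-1})}{q+q^{-1}}\,I,
\]
which is precisely equation (\ref{e15}) of the original system, already satisfied with $X$ replaced by $A.$ By the uniqueness statement in Theorem~\ref{thm2.3}, $X=A.$

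Having identified $X=A,$ I would apply Lemma~\ref{lem4.2} a second time, now with $(A^*,A^\varepsilon)$ in the role of the initial Leonard pair and $A$ in the role of the extension from Theorem~\ref{thm2.3}. Its hypothesis---that the new ``$A^\varepsilon$'' (which is $A$) be multiplicity-free---is automatic since $(A,A^*)$ is a Leonard pair and hence $A$ is multiplicity-free by Lemma~\ref{lem1.0}. The associated primitive idempotents of this new $A^\varepsilon=A$ are indexed by the eigenvalues $aq^{2i-d}+a^{-1}q^{d-2i}=\theta_i$ for $0\leq i\leq d,$ so they coincide with the original $\{E_i\}^d_{i=0}.$ Lemma~\ref{lem4.2} then yields that $(A^\varepsilon;\{E_i^\varepsilon\}^d_{i=0};A;\{E_i\}^d_{i=0})$ is a Leonard system of QRacah type and corresponds to $(c,a,b;q),$ which is the desired conclusion.

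There is no serious computational obstacle in this approach: the only substantive point to verify is the cyclic invariance of the defining relation under the simultaneous rotation of operators and parameters, which is immediate from inspection of the right-hand sides of (\ref{e15})--(\ref{e14}). All the heavy lifting---the explicit matrices $A^{\varepsilon\natural},$ $M,$ and the verification that $A^{\varepsilon\rho}$ and $A^{*\rho}$ have the required bidiagonal shape---was carried out in the proofs of Propositions~\ref{prop3.1}, \ref{prop4.1} and Lemma~\ref{lem4.2}; the present argument simply relabels those results.
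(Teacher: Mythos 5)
Your proposal matches the paper's proof essentially step for step: invoke Lemma~\ref{lem4.2} to get that $(A^*,A^\varepsilon)$ is a Leonard pair of QRacah type corresponding to $(b,c,a;q)$, apply Theorem~\ref{thm2.3} to produce a unique extension $X$, identify $X=A$ by comparing the defining relation with (\ref{e15}), and then apply Lemma~\ref{lem4.2} again with the cyclically shifted data $(A',A^{*\prime},A^{\varepsilon\prime})=(A^*,A^\varepsilon,A)$ and $(a',b',c')=(b,c,a)$. The extra details you supply (checking multiplicity-freeness of the new $A^\varepsilon=A$ and matching the primitive idempotents) are correct and left implicit in the paper.
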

\begin{proof}
By Lemma~\ref{lem4.2} the Leonard pair $(A^*,A^\varepsilon)$ has QRacah type and corresponds to $(b,c,a;q)\in QRAC_{red}.$ Now by Theorem~\ref{thm2.3} there exists an element $A^\vee$ in ${\rm End}(V)$ such that $A^*,$ $A^\varepsilon,$ $A^\vee$ satisfy the $\mathbb{Z}_3$-symmetric Askey-Wilson relations, one of which is
$$
\frac{qA^*A^\varepsilon-q^{-1}A^\varepsilon A^*}{q^2-q^{-2}}+A^\vee=\frac{(b+b^{-1})(c+c^{-1})+(a+a^{-1})(q^{d+1}+q^{-d-1})}{q+q^{-1}}~I.
$$
Comparing this with (\ref{e15}) we find $A^\vee=A.$ We now apply Lemma~\ref{lem4.2} to $A'=A^*,$ ${A^*}'=A^\varepsilon,$ ${A^\varepsilon}'=A,$ $a'=b,$ $b'=c,$ $c'=a,$ and obtain the desired result.
\end{proof}

\begin{cor}\label{lem4.3}
Assume that $A^\varepsilon$ is multiplicity-free. For
$0\leq i\leq d$ let $\theta_i^\varepsilon$ denote the scalar from {\rm (\ref{e13})} and let $E_i^\varepsilon$ denote the primitive
idempotent of $A^\varepsilon$ associated with
$\theta_i^\varepsilon.$ Then $(A;\{E_i\}^d_{i=0};A^*;$
$\{E_i^*\}^d_{i=0};$ $A^\varepsilon;\{E_i^\varepsilon\}^d_{i=0})$
is a Leonard triple system of QRacah type.
\end{cor}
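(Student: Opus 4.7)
The plan is to recognize the corollary as essentially a repackaging of Lemmas~\ref{lem4.2} and \ref{lem4.4} via the characterization of Leonard triple systems in Lemma~\ref{lem1.1}. Concretely, Lemma~\ref{lem1.1} says that a sequence $(A;\{E_i\};A^*;\{E_i^*\};A^\varepsilon;\{E_i^\varepsilon\})$ is a Leonard triple system on $V$ if and only if the three cyclically rotated subsequences are Leonard systems on $V$.

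First I would verify the three Leonard-system conditions one at a time. The first, $(A;\{E_i\}^d_{i=0};A^*;\{E_i^*\}^d_{i=0})$, is just $\Phi$, which is a Leonard system by hypothesis. The second, $(A^*;\{E_i^*\}^d_{i=0};A^\varepsilon;\{E_i^\varepsilon\}^d_{i=0})$, is a Leonard system (of QRacah type, corresponding to $(b,c,a;q)$) by Lemma~\ref{lem4.2}; here the hypothesis that $A^\varepsilon$ is multiplicity-free is exactly what is needed to legitimately form the idempotents $E_i^\varepsilon$ used in the statement. The third, $(A^\varepsilon;\{E_i^\varepsilon\}^d_{i=0};A;\{E_i\}^d_{i=0})$, is a Leonard system (of QRacah type, corresponding to $(c,a,b;q)$) by Lemma~\ref{lem4.4}. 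Applying Lemma~\ref{lem1.1} then yields that the full sequence is a Leonard triple system on $V$.

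Next I would check that this Leonard triple system has QRacah type in the sense of Definition~\ref{defn1.12}. Since $\Phi$ has QRacah type, the first two eigenvalue sequences $\{\theta_i\}_{i=0}^d$ and $\{\theta_i^*\}_{i=0}^d$ already have the required forms (\ref{e11}), (\ref{e12}). The third eigenvalue sequence $\{\theta_i^\varepsilon\}_{i=0}^d$ is, by construction, the ordering of eigenvalues of $A^\varepsilon$ coming from Corollary~\ref{cor3.1}, which identifies the characteristic roots of $A^\varepsilon$ with the scalars in (\ref{e13}); this supplies form (\ref{e13}) with the nonzero $c$ inherited from $(a,b,c;q)\in QRAC_{red}$. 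The condition $d\geq 3$ is built into the definition of QRacah type for $(A,A^*)$.

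There is no real obstacle here: the deep work has been done in Lemma~\ref{lem4.2} (which itself rests on Propositions~\ref{prop3.1} and \ref{prop4.1}, giving the bidiagonal forms of $A^{\varepsilon\rho}$ and $A^{*\rho}$) and in Lemma~\ref{lem4.4} (which bootstraps from Lemma~\ref{lem4.2} by applying Theorem~\ref{thm2.3} to the pair $(A^*,A^\varepsilon)$ to recover $A$). The only thing to be careful about is simply to invoke Lemma~\ref{lem1.1} with the correct cyclic triple of Leonard systems and to note explicitly that $(a,b,c;q)$ being in $QRAC_{red}$ together with multiplicity-freeness of $A^\varepsilon$ (equivalently, condition (iii) of Corollary~\ref{thm3.1}) ensures the eigenvalues $\{\theta_i^\varepsilon\}_{i=0}^d$ are distinct, so that the idempotents $E_i^\varepsilon$ are well-defined.
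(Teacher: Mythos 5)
Your proof is correct and follows exactly the same route as the paper's one-line argument, which cites Lemma~\ref{lem1.1}, Definition~\ref{defn1.12}, Lemma~\ref{lem4.2}, and Lemma~\ref{lem4.4}; you have simply spelled out the details of how those four ingredients combine.
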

\begin{proof}
Immediate from Lemma~\ref{lem1.1}, Definition~\ref{defn1.12},
Lemma~\ref{lem4.2}, Lemma~\ref{lem4.4}.
\end{proof}

\begin{thm}\label{thm4.1}
With reference to Theorem~\ref{thm2.3}, the following {\rm (i)}--{\rm (iii)} are equivalent.
\begin{enumerate}
\item[{\rm (i)}] $(A,A^*,A^\varepsilon)$ is a Leonard triple on $V.$

\item[{\rm (ii)}] $A^\varepsilon$ is multiplicity-free.

\item[{\rm (iii)}] $c^2$ is not among
$q^{2d-2},q^{2d-4},\ldots,q^{2-2d}.$
\end{enumerate}
Suppose {\rm (i)}--{\rm (iii)} hold. For $0\leq i\leq d$ let $\theta_i^\varepsilon$ denote the scalar from {\rm (\ref{e13})}. Then the third eigenvalue sequence of each Leonard triple system associated with $(A,A^*,A^\varepsilon)$ is eithr $\{\theta_i^\varepsilon\}^d_{i=0}$ or $\{\theta_{d-i}^\varepsilon\}^d_{i=0}.$
\end{thm}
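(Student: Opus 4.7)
The plan is to use a cyclic chain of implications, together with the work already done in Corollary~\ref{thm3.1} and Corollary~\ref{lem4.3}. Note that (ii) $\Leftrightarrow$ (iii) is already Corollary~\ref{thm3.1}, so it suffices to establish (i) $\Leftrightarrow$ (ii).

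For the implication (i) $\Rightarrow$ (ii), I would argue directly from Definition~\ref{defn1.5}. Taking $B = A^\varepsilon$ in that definition yields a basis in which $A^\varepsilon$ is diagonal and $A$ is irreducible tridiagonal; taking $B = A$ yields a basis in which $A$ is diagonal and $A^\varepsilon$ is irreducible tridiagonal. These two properties are exactly (i), (ii) of Definition~\ref{defn1.1} for the ordered pair $(A, A^\varepsilon)$, so $(A, A^\varepsilon)$ is a Leonard pair on $V$. By Lemma~\ref{lem1.0} each member of a Leonard pair is multiplicity-free, so $A^\varepsilon$ is multiplicity-free.

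For the implication (ii) $\Rightarrow$ (i), I would invoke Corollary~\ref{lem4.3}: under the hypothesis that $A^\varepsilon$ is multiplicity-free, Corollary~\ref{lem4.3} already produces a Leonard triple system $(A;\{E_i\}^d_{i=0};A^*;\{E_i^*\}^d_{i=0};A^\varepsilon;\{E_i^\varepsilon\}^d_{i=0})$ of QRacah type. By Definition~\ref{defn3.1} the associated triple $(A,A^*,A^\varepsilon)$ is then a Leonard triple on $V$. This closes the equivalence.

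For the final assertion about the third eigenvalue sequence, I would use the associate class description. By Definition~\ref{defn2.5}, every Leonard triple system associated with $(A,A^*,A^\varepsilon)$ lies in the $(\mathbb{Z}_2)^3$-orbit of the particular system $\Psi$ produced by Corollary~\ref{lem4.3}; this orbit is generated by $\downharpoonright, \downarrow, \Downarrow$. Consulting the table displayed just above Definition~\ref{defn3.1}, we see that $\downarrow$ and $\Downarrow$ leave the third eigenvalue sequence unchanged, while $\downharpoonright$ reverses it. Since $\Psi$ itself has third eigenvalue sequence $\{\theta_i^\varepsilon\}^d_{i=0}$ by construction, every member of its $(\mathbb{Z}_2)^3$-orbit has third eigenvalue sequence equal to either $\{\theta_i^\varepsilon\}^d_{i=0}$ or $\{\theta_{d-i}^\varepsilon\}^d_{i=0}$, as required. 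The only mildly delicate step is the first: producing a Leonard pair $(A,A^\varepsilon)$ inside the Leonard triple, but this is essentially an unpacking of Definition~\ref{defn1.5}.
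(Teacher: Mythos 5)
Your proof is correct and follows essentially the same route as the paper: (ii)~$\Leftrightarrow$~(iii) via Corollary~\ref{thm3.1}, (ii)~$\Rightarrow$~(i) via Corollary~\ref{lem4.3}, (i)~$\Rightarrow$~(ii) via Lemma~\ref{lem1.0}, and the final assertion via Definition~\ref{defn2.5} and the table above Definition~\ref{defn3.1}. The only difference is that you spell out the intermediate step of extracting the Leonard pair $(A,A^\varepsilon)$ from the Leonard triple before applying Lemma~\ref{lem1.0}, which the paper leaves implicit.
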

\begin{proof}
(i) $\Rightarrow$ (ii): By Lemma~\ref{lem1.0}.

(ii) $\Rightarrow$ (i): By Corollary~\ref{lem4.3}.

(ii) $\Leftrightarrow$ (iii): By Corollary~\ref{thm3.1}.

Suppose (i)--(iii) hold. By Corollary~\ref{lem4.3} there is a Leonard triple system $\Psi$ associated with $(A,A^*,A^\varepsilon)$ that has the third eigenvalue sequence $\{\theta_i^\varepsilon\}^d_{i=0}.$ By Definition~\ref{defn2.5} the associate class for $(A,A^*,A^\varepsilon)$ is the $(\mathbb{Z}_2)^3$-orbit containing $\Psi.$ Now the last assertion follows from the table above Definition~\ref{defn3.1}.
\end{proof}

\section{A set $T$-$QRAC_{red}$}

The following definition is motivated by Theorem~\ref{thm4.1}.

\begin{defn}\label{defn1.13}
Let $T$-$QRAC_{red}=T$-$QRAC_{red}(d,\mathbb{K})$ denote the set
of all $4$-tuples $(a,b,c;q)$ of scalars in $\mathbb{K}$ that
satisfy the following conditions (T-RQRAC1)--(T-RQRAC4).
\begin{description}
\item[{\rm (T-RQRAC1)}] $a\not=0,$ $b\not=0,$ $c\not=0,$ $q\not=0.$

\item[{\rm (T-RQRAC2)}] $q^{2i}\not=1$ for $1\leq i\leq d.$

\item[{\rm (T-RQRAC3)}] None of $a^2,$ $b^2,$ $c^2$ is among $q^{2d-2},q^{2d-4},\ldots,q^{2-2d}.$

\item[{\rm (T-RQRAC4)}] None of $abc,$ $a^{-1}bc,$ $ab^{-1}c,$ $abc^{-1}$ is among $q^{d-1},q^{d-3},\ldots,q^{1-d}.$
\end{description}
\end{defn}

We observe that $T$-$QRAC_{red}$ is a subset of the set
$QRAC_{red}$ from Definition~\ref{defn1.8}.  Recall the $D_4$
action on $QRAC_{red},$ from Lemma~\ref{lem2.3}.

\begin{lem}\label{lem1.18}
The set $T$-$QRAC_{red}$ is closed under the action of $D_4$ on $QRAC_{red}.$
\end{lem}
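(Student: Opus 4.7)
The plan is to reduce to a routine check on the generators. By Lemma~\ref{lem2.3} the $D_4$ action on $QRAC_{red}$ is generated by $*,\downarrow,\Downarrow$, whose effect is given by (\ref{e4.6})--(\ref{e4.8}). Since $D_4$ is generated by these three elements, it suffices to verify that each of them sends $T$-$QRAC_{red}$ into itself.

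First I would observe that $T$-$QRAC_{red}$ differs from $QRAC_{red}$ only in one additional clause: condition (T-RQRAC3) adds the requirement that $c^2\notin\{q^{2d-2},q^{2d-4},\ldots,q^{2-2d}\}$. Because $T$-$QRAC_{red}\subseteq QRAC_{red}$ and the $D_4$ action already keeps $QRAC_{red}$ invariant (Lemma~\ref{lem2.3}), all the remaining conditions (T-RQRAC1), (T-RQRAC2), (T-RQRAC4), together with the $a^2,b^2$ halves of (T-RQRAC3), are automatically preserved. Only the new $c^2$ clause has to be checked under each of $*,\downarrow,\Downarrow$.

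Now I would go through the three generators. For $\downarrow$ the image is $(a,b^{-1},c;q)$, so both $c$ and $q$ are unchanged and the $c^2$-clause is trivially preserved. The same argument works for $\Downarrow$, whose image $(a^{-1},b,c;q)$ again leaves $c$ and $q$ alone. For $*$ the image is $(b^{-1},a^{-1},c^{-1};q^{-1})$, so after the action the third coordinate becomes $c^{-1}$ and the parameter becomes $q^{-1}$; the $c^2$-clause to verify is then that $c^{-2}$ is not among $(q^{-1})^{2d-2},(q^{-1})^{2d-4},\ldots,(q^{-1})^{2-2d}$. But this list of exponents of $q^{-1}$ is exactly the list $q^{2-2d},q^{4-2d},\ldots,q^{2d-2}$, i.e.\ the same set that appears in (T-RQRAC3); and $c^{-2}$ lies in this set iff $c^2$ does. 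Hence the condition is preserved.

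The ``hard part'' here is essentially bookkeeping: I only need to make sure the symmetric set $\{q^{2d-2},q^{2d-4},\ldots,q^{2-2d}\}$ is stable under $q\mapsto q^{-1}$ (which it is, being closed under inversion), and that $c\mapsto c^{\pm 1}$ likewise preserves membership in this set. Once these observations are made, invariance of $T$-$QRAC_{red}$ under each generator is immediate, and the lemma follows from the fact that $*,\downarrow,\Downarrow$ generate $D_4$.
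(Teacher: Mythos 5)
Your proof is correct and uses the same strategy as the paper: reduce to the generators $*,\downarrow,\Downarrow$ and check each one preserves $T$-$QRAC_{red}$. Your added observation — that since $T$-$QRAC_{red}\subseteq QRAC_{red}$ and $QRAC_{red}$ is already $D_4$-stable by Lemma~\ref{lem2.3}, only the new $c^2$-clause of (T-RQRAC3) needs to be checked — is a nice streamlining of the "routine check" the paper leaves to the reader.
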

\begin{proof}
Let $(a,b,c;q)\in T$-$QRAC_{red}.$ It is routine to check each of
$(a,b,c;q)^*,(a,b,c;q)^\downarrow,(a,b,c;q)^\Downarrow$ is
contained in $T$-$QRAC_{red}.$ The result follows since
$*,\downarrow,\Downarrow$ generate $D_4.$
\end{proof}

\section{Classification of the Leonard triple systems of QRacah type}

In this section we classify up to isomorphism the Leonard triple systems of QRacah type. We do this as follows. Recall from Definition~\ref{defn3.3} that $T$-$QRAC$ is the set of isomorphism classes of Leonard triple systems that have QRacah type. Recall the set $T$-$QRAC_{red}$ from Definition~\ref{defn1.13}. We display a bijection $\pi:T$-$QRAC_{red}\to T$-$QRAC.$

\begin{defn}\label{defn4.1}
Define a map $\pi:T$-$QRAC_{red}\rightarrow T$-$QRAC$ as follows.
Let $(a,b,c;q)\in T$-$QRAC_{red}.$ By Theorem~\ref{thm4.1} along with Lemmas~\ref{lem4.2} and \ref{lem4.4} there exists a Leonard triple system of QRacah type
$$
\Psi=(A;\{E_i\}^d_{i=0};A^*;\{E_i^*\}^d_{i=0};A^\varepsilon;\{E_i^\varepsilon\}^d_{i=0})
$$
that satisfies (i)--(iv) below.
\begin{enumerate}
\item[(i)] The Leonard system $(A;\{E_i\}^d_{i=0};A^*;\{E_i^*\}^d_{i=0})$ corresponds to $(a,b,c;q).$

\item[(ii)] The Leonard system $(A^*;\{E_i^*\}^d_{i=0};A^\varepsilon;\{E_i^\varepsilon\}^d_{i=0})$ corresponds to $(b,c,a;q).$

\item[(iii)] The Leonard system $(A^\varepsilon;\{E_i^\varepsilon\}^d_{i=0};A;\{E_i\}^d_{i=0})$ corresponds to $(c,a,b;q).$

\item[(iv)] The elements $A,A^*,A^\varepsilon$ satisfy the $\mathbb{Z}_3$-symmetric Askey-Wilson relations (\ref{e15})--(\ref{e14}).

\end{enumerate}
Observe that $\Psi$ is unique up to isomorphism of Leonard triple systems. The map $\pi$ sends $(a,b,c;q)$ to the isomorphism class of $\Psi$ in the set of all Leonard triple systems.
\end{defn}

\begin{defn}\label{defn4.3}
Referring to Definition~\ref{defn4.1} we say that $\Psi$ and $(a,b,c;q)$ {\it correspond via $\pi.$} Similarly, we say that the Leonard triple $(A,A^*,A^\varepsilon)$ and $(a,b,c;q)$ {\it correspond via $\pi.$}
\end{defn}

We are going to show that $\pi$ is a bijection. Before we do this, we give a concrete description of $\pi$ using matrices. Pick $(a,b,c;q)$ in $T$-$QRAC_{red}.$
Define
$\{\theta_i\}^d_{i=0},\{\theta_i^*\}^d_{i=0}$ by (\ref{e7}), (\ref{e8}) and $\{\varphi_i\}^d_{i=1},\{\phi_i\}^d_{i=1}$ by (\ref{e9}), (\ref{e10}).
By Lemma~\ref{lem1.15},
\begin{align}\label{e6.18}
(\{\theta_i\}^d_{i=0},\{\theta_i^*\}^d_{i=0},\{\varphi_i\}^d_{i=1},\{\phi_i\}^d_{i=1})
\end{align}
is a parameter array of QRacah type and corresponds to $(a,b,c;q).$ Let $A$ (resp. $A^*$) denote the lower
bidiagonal (resp. upper bidiagonal) matrix in ${\rm
Mat}_{d+1}(\mathbb{K})$ with entries
\begin{align*}
&A_{ii}=\theta_i, \qquad  A^*_{ii}=\theta_i^* \qquad \quad \hbox{$(0\leq i\leq d),$}\\
&A_{i,i-1}=1, \qquad A^*_{i-1,i}=\varphi_i \qquad \quad  \hbox{$(1\leq i\leq d).$}
\end{align*}
For $0\leq i\leq d$ let $E_i, E_i^*$ denote the
primitive idempotents of $A, A^*$ associated with
$\theta_i, \theta_i^*$ respectively. By Lemma~\ref{thm1.5}, $(A;\{E_i\}^d_{i=0};A^*;\{E_i^*\}^d_{i=0})$ is a Leonard system that has parameter array (\ref{e6.18}). Therefore the Leonard system $(A;\{E_i\}^d_{i=0};A^*;\{E_i^*\}^d_{i=0})$ corresponds to $(a,b,c;q).$ Define
$$
A^\varepsilon=\frac{q^{-1}A^*A-qAA^*}{q^2-q^{-2}}+\frac{(a+a^{-1})(b+b^{-1})+(c+c^{-1})(q^{d+1}+q^{-d-1})}{q+q^{-1}}~I.
$$
In concrete terms $A^\varepsilon$ is the tridiagonal matrix in ${\rm
Mat}_{d+1}(\mathbb{K})$ with entries
$$
A^{\varepsilon}_{ii}=\frac{(a+a^{-1})(b+b^{-1})+(c+c^{-1})(q^{d+1}+q^{-d-1})}{q+q^{-1}}-\frac{\theta_i\theta_i^*}{q+q^{-1}}-\frac{q\varphi_i-q^{-1}\varphi_{i+1}}{q^2-q^{-2}}
$$
for $0 \leq i \leq d$ and
$$
A^{\varepsilon}_{i,i-1}=\frac{q^{-1}\theta_{i}^*-q\theta_{i-1}^*}{q^2-q^{-2}},\qquad
\quad
A^{\varepsilon}_{i-1,i}=\varphi_i\hspace{0.5mm}\frac{q^{-1}\theta_i-q\theta_{i-1}}{q^2-q^{-2}}
$$
for $1 \leq i \leq d.$ By Theorem~\ref{thm2.3} the matrices $A,A^*,A^\varepsilon$ satisfy (\ref{e15})--(\ref{e14}). By Corollaries~\ref{cor3.1}
and \ref{thm3.1} the matrix $A^\varepsilon$ is multiplicity-free
with eigenvalues
$$
\theta_i^\varepsilon=cq^{2i-d}+c^{-1}q^{d-2i} \qquad \quad (0\leq i\leq d).
$$
For $0\leq i\leq d$ let $E_i^\varepsilon$ denote the
primitive idempotent of $A^\varepsilon$ associated with
$\theta_i^\varepsilon.$ By Corollary~\ref{lem4.3} the sequence $\Psi=(A;\{E_i\}^d_{i=0};A^*;\{E_i^*\}^d_{i=0};A^\varepsilon;\{E_i^\varepsilon\}^d_{i=0})$ is a Leonard triple system of QRacah type. By Lemma~\ref{lem4.2} the Leonard system $(A^*;\{E_i^*\}^d_{i=0};A^\varepsilon;\{E_i^\varepsilon\}^d_{i=0})$ corresponds to $(b,c,a;q).$ By Lemma~\ref{lem4.4} the Leonard system $(A^\varepsilon;\{E_i^\varepsilon\}^d_{i=0};A;\{E_i\}^d_{i=0})$ corresponds to $(c,a,b;q).$ Therefore the map $\pi$ sends $(a,b,c;q)$ to the isomorphism class of $\Psi.$ This concludes our concrete description of $\pi.$

\medskip

In our classification we will make use of the following result.

\begin{thm}\label{thm6.1}{\rm (\cite[Theorem~3.2]{Nom&ter2007}).} Let $(A;\{E_i\}^d_{i=0};A^*;\{E_i^*\}^d_{i=0})$ denote a Leonard system on $V.$ Let $\mathcal{X}$ denote the $\mathbb{K}$-subspace of ${\rm End}(V)$ consisting of the $X\in {\rm End}(V)$ such that both
\begin{align*}
E_iXE_j&=0 \qquad \qquad \hbox{if $|i-j|>1,$}\\
E_i^*XE_j^*&=0 \qquad \qquad \hbox{if $|i-j|>1$}
\end{align*}
for $0\leq i,j\leq d.$ Then the space $\mathcal{X}$ is spanned by
\begin{align}\label{e5.1}
I,~A,~A^*,~AA^*,~A^*A.
\end{align}
Moreover {\rm (\ref{e5.1})} is a basis for $\mathcal{X}$ provided $d\geq 2.$
\end{thm}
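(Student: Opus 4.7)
I would prove the theorem in three parts: containment of the five elements in $\mathcal{X}$, linear independence for $d\geq 2$, and spanning.

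For containment, each check uses the Leonard system axioms directly. The identity satisfies both conditions trivially since $E_iE_j=\delta_{ij}E_i$. For $A$, the relation $E_iAE_j=\theta_i\delta_{ij}E_i$ handles the first condition, and Definition~\ref{defn1.2}(v) gives the second (dually for $A^*$). For $AA^*$, compute $E_i(AA^*)E_j=\theta_i(E_iA^*E_j)$, which vanishes for $|i-j|>1$ by Definition~\ref{defn1.2}(iv); dually $E_i^*(AA^*)E_j^*=(E_i^*AE_j^*)\theta_j^*$ vanishes by Definition~\ref{defn1.2}(v). The argument for $A^*A$ is symmetric.

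For linear independence when $d\geq 2$, I would work in an $A$-eigenbasis, where $A$ is diagonal with entries $\theta_i$ and $A^*$ is irreducible tridiagonal with nonzero entries $A^*_{i\pm 1,i}$. The $(i-1,i)$ superdiagonal entry of a hypothetical relation $\alpha I+\beta A+\gamma A^*+\delta AA^*+\epsilon A^*A=0$ reads $A^*_{i-1,i}(\gamma+\delta\theta_{i-1}+\epsilon\theta_i)=0$; together with the analogous subdiagonal equation this forces $\delta=\epsilon$, and comparing at $i=1,2$ (using $d\geq 2$) gives $\delta(\theta_2-\theta_0)=0$, so $\delta=\epsilon=\gamma=0$. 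The diagonal equation then forces $\alpha=\beta=0$ by distinctness of the $\theta_i$.

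For spanning, let $X\in\mathcal{X}$. Using that each $E_jV$ is 1-dimensional and that $E_iA^*E_j\neq 0$ for $|i-j|=1$ spans the 1-dimensional space of rank-1 maps $E_jV\to E_iV$, I would write
\[
X=\sum_{i=0}^d\lambda_iE_i+\sum_{|i-j|=1}\eta_{ij}\,E_iA^*E_j
\]
for unique scalars $\lambda_i,\eta_{ij}\in\mathbb{K}$. A combination $\alpha I+\beta A+\gamma A^*+\delta AA^*+\epsilon A^*A$ has coefficients $\lambda_i=\alpha+\beta\theta_i+\gamma a_i^*+(\delta+\epsilon)\theta_ia_i^*$, $\eta_{i,i+1}=\gamma+\delta\theta_i+\epsilon\theta_{i+1}$, and $\eta_{i+1,i}=\gamma+\delta\theta_{i+1}+\epsilon\theta_i$. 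I would solve the reference equations from $\eta_{01},\eta_{10},\eta_{12}$ for $\gamma,\delta,\epsilon$ (possible since the $3\times 3$ coefficient matrix has determinant $(\theta_1-\theta_0)(\theta_2-\theta_0)\neq 0$) and likewise determine $\alpha,\beta$ from two diagonal entries.

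The main obstacle is then showing that all remaining equations, for $i\geq 2$, hold automatically so that the combination agrees with $X$ everywhere. My approach is to invoke the dual expansion $X=\sum_i\xi_iE_i^*+\sum_{|i-j|=1}\zeta_{ij}E_i^*AE_j^*$ in the $A^*$-eigenbasis, equate the two descriptions of $X$, and expand both in the $\Phi$-split basis. Matching against the explicit transition coefficients between $\{E_i\}$ and $\{E_i^*\}$ converts the second condition $E_i^*XE_j^*=0$ for $|i-j|>1$ into affine recursions that propagate the three reference relations to all $i$. Carrying out this bookkeeping is the technical heart of the argument.
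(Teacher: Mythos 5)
The paper does not prove this statement; Theorem~\ref{thm6.1} is cited from Nomura--Terwilliger \cite{Nom&ter2007} and used as a black box, so there is no in-paper proof to compare against. Assessed on its own terms, your treatment of containment and of linear independence is correct. Containment follows directly from $E_iE_j=\delta_{ij}E_i$, from Definition~\ref{defn1.2}(iv),(v), and from $E_iA=\theta_iE_i$, $A^*E_j^*=\theta_j^*E_j^*$; the independence argument in the $A$-eigenbasis correctly extracts $\delta=\epsilon$ from the two off-diagonal families, then $\delta=\epsilon=\gamma=0$ from $\theta_0\ne\theta_2$ (here $d\ge 2$ is used), then $\alpha=\beta=0$. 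The coefficient formulas $\lambda_i=\alpha+\beta\theta_i+\gamma a_i^*+(\delta+\epsilon)\theta_i a_i^*$, $\eta_{i,i+1}=\gamma+\delta\theta_i+\epsilon\theta_{i+1}$, $\eta_{i+1,i}=\gamma+\delta\theta_{i+1}+\epsilon\theta_i$, and the determinant $(\theta_1-\theta_0)(\theta_2-\theta_0)$, are all correct.

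The spanning step, however, contains a genuine gap. After subtracting the combination $Y=\alpha I+\beta A+\gamma A^*+\delta AA^*+\epsilon A^*A$ matched to the five reference entries, you still must show that $Z=X-Y\in\mathcal{X}$, which has those five entries equal to zero, vanishes identically. This is exactly the assertion $\dim\mathcal{X}\le 5$, which is the entire content of the theorem; it is not an afterthought. Your plan is to equate the two expansions of $Z$ (one in each eigenbasis) and ``match against the explicit transition coefficients'' to obtain ``affine recursions that propagate the three reference relations to all $i$.'' But you have not written down those recursions, identified which entries of the transition matrix enter, or explained why the resulting linear system over-determines the $O(d)$ remaining free entries of a tridiagonal matrix. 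The transition matrix between the $A$- and $A^*$-eigenbases is the evaluation matrix of the associated orthogonal polynomials and has no sparsity of its own; it is far from obvious that the $A^*$-tridiagonality of $Z$ reduces to local recursions in the $A$-eigenbasis entries. Note also that you cannot invoke injectivity of $\mathcal{X}\to\mathbb{K}^5$ as an aid here without circularity, since that injectivity is equivalent to the dimension bound you are trying to prove. What you have is an outline that names the difficulty rather than a proof that resolves it; the Nomura--Terwilliger argument establishing the dimension bound is the essential missing piece.
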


We now give our classification of the Leonard triple systems of QRacah type.

\begin{thm}\label{thm6.2}
The map $\pi$ from Definition~\ref{defn4.1} is a bijection.
\end{thm}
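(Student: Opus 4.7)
The plan is to establish injectivity and surjectivity of $\pi$ separately, relying on the concrete matrix description of $\pi$ given just above Theorem~\ref{thm6.1}, the parameter-array classification Lemma~\ref{thm1.1}, the orbit description Lemma~\ref{lem1.8} together with its uniqueness consequence Corollary~\ref{lem2.8}, and the chain Theorem~\ref{thm2.3}--Theorem~\ref{thm4.1}--Corollary~\ref{lem4.3}.

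For injectivity, suppose $(a,b,c;q)$ and $(a',b',c';q')$ in $T$-$QRAC_{red}$ both map under $\pi$ to the same class $[\Psi]$. By Definition~\ref{defn4.1}(i)--(iii), the first, second, third Leonard systems of $\Psi$ correspond to the cyclic tuples $(a,b,c;q)$, $(b,c,a;q)$, $(c,a,b;q)$ and simultaneously to the primed counterparts. An isomorphism realizing $\Psi\cong\Psi'$ identifies each corresponding pair of Leonard systems, so by Corollary~\ref{lem2.8} the primed tuple lies in the $8$-element orbit of $(a,b,c;q)$ under Lemma~\ref{lem1.8}; imposing all three cyclically permuted orbit conditions simultaneously leaves only two candidates, the identity and the full inversion $(a^{-1},b^{-1},c^{-1};q^{-1})$. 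To rule out the inversion, I note that the $A^\varepsilon$ operators produced by Theorem~\ref{thm2.3} for these two tuples applied to a common Leonard pair $(A,A^*)$ differ by a nonzero scalar multiple of $AA^*-A^*A$. Any isomorphism realizing $\Psi\cong\Psi'$ on a common underlying space must fix both $A$ and $A^*$ (they come from the identified first Leonard systems), and by the commutant property of a Leonard pair such an algebra automorphism is scalar and hence acts as the identity on ${\rm End}(V)$; this forces the two distinct $A^\varepsilon$ operators to coincide, a contradiction. Therefore $(a',b',c';q')=(a,b,c;q)$.

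For surjectivity, fix $[\Psi]\in T$-$QRAC$ with representative $\Psi=(A;\{E_i\};A^*;\{E_i^*\};A^\varepsilon;\{E_i^\varepsilon\})$. By Definition~\ref{defn1.12} choose nonzero $a,b,c,q\in\mathbb{K}$ with $q^2\neq\pm 1$ realizing the three eigenvalue sequences through (\ref{e11})--(\ref{e13}). I would first check $(a,b,c;q)\in T$-$QRAC_{red}$: (T-RQRAC1), (T-RQRAC2) follow from nonzeroness together with Lemma~\ref{lem1.0} and Lemma~\ref{lem1.6} applied to $\{\theta_i\}$; (T-RQRAC3) follows by applying Lemma~\ref{lem1.6} in turn to each of the three multiplicity-free eigenvalue sequences; and (T-RQRAC4) follows from Lemma~\ref{lem1.1} (the first Leonard system of $\Psi$ has QRacah type) together with Lemma~\ref{lem1.19}, since the forbidden set $\{q^{d-1},\dots,q^{1-d}\}$ is closed under inversion, so (RQRAC4) for the first Leonard system's tuple $(a,b,c_1;q)$ (with $c_1\in\{c,c^{-1}\}$) is equivalent to (T-RQRAC4) for $(a,b,c;q)$.

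It remains to verify $\pi(a,b,c;q)=[\Psi]$, equivalently that $\Psi$ satisfies conditions (i)--(iv) of Definition~\ref{defn4.1}. By Theorem~\ref{thm4.1} applied to the Leonard pair $(A,A^*)$, the third eigenvalue sequence of any associated Leonard triple system equals $\{c_1q^{2i-d}+c_1^{-1}q^{d-2i}\}$ or its reversal, where $c_1$ is the split parameter furnished by Lemma~\ref{lem1.19}; matching against our chosen sequence $\{cq^{2i-d}+c^{-1}q^{d-2i}\}$ forces $c_1\in\{c,c^{-1}\}$, and Lemma~\ref{lem1.8} then identifies the parameter array with that of $(a,b,c;q)$, yielding (i). Conditions (ii), (iii) follow by analogous applications of Lemma~\ref{lem4.2} and Lemma~\ref{lem4.4} (or Theorem~\ref{thm4.1}) to the second and third Leonard systems of $\Psi$. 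The main obstacle is condition (iv): one must show the operator $A^\varepsilon$ of $\Psi$ coincides with the unique $A^{\varepsilon\#}$ produced by Theorem~\ref{thm2.3} applied to $(A,A^*)$ with tuple $(a,b,c;q)$. Both operators lie in $\mathcal{X}=\mathrm{span}(I,A,A^*,AA^*,A^*A)$ (by Theorem~\ref{thm6.1} combined with Definition~\ref{defn1.6}(v), (vi) for $A^\varepsilon$, and by inspection of (\ref{e14}) for $A^{\varepsilon\#}$), both are multiplicity-free with the same ordered eigenvalue sequence, and both extend $(A,A^*)$ to Leonard triples whose second Leonard systems correspond to $(b,c,a;q)$. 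A rigidity argument---essentially the commutant analysis from the injectivity step, used to rule out additional elements of $\mathcal{X}$ compatible with all three prescribed Leonard system parameter arrays---then forces $A^\varepsilon=A^{\varepsilon\#}$, completing (iv). The uniqueness statement in Definition~\ref{defn4.1} now yields $[\Psi]=\pi(a,b,c;q)$, establishing surjectivity.
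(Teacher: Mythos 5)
Your proposal follows the same high-level strategy as the paper---express $A^\varepsilon$ in the span $\mathcal{X}=\mathrm{span}(I,A,A^*,AA^*,A^*A)$ via Theorem~\ref{thm6.1} and use the Askey--Wilson relations to pin it down---but it skips the entire computation that makes the argument work, and in fact the logic as written is circular.

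The crux is condition (iv) of Definition~\ref{defn4.1}: showing that the given $A^\varepsilon$ from $\Psi$ coincides with the operator $A^{\varepsilon\#}$ that Theorem~\ref{thm2.3} produces from $(A,A^*)$ and $(a,b,c;q)$. You assert this follows from a ``rigidity argument,'' citing that both operators lie in $\mathcal{X}$, have the same eigenvalues, and extend $(A,A^*)$ to a Leonard triple with prescribed second-system parameters. But these facts alone do not determine a point of the five-dimensional space $\mathcal{X}$; the space of multiplicity-free elements of $\mathcal{X}$ with a given spectrum and a given Leonard-pair relation to $A^*$ is not obviously a singleton, and you give no argument that it is. In fact the paper spends the bulk of the proof on exactly this: it writes $A^\varepsilon = eI+fA+f^*A^*+gAA^*+g^*A^*A$, computes the matrix $A^{\varepsilon\natural}$ from (\ref{e6.13})--(\ref{e6.15}), extracts specific entries of the Askey--Wilson relations (\ref{e6.9}), (\ref{e6.11}) for $(A^\varepsilon,A)$ and for $(A^*,A^\varepsilon)$ to prove $f=0$, $f^*=0$, then handles three separate cases ((A) $q^4\ne -1$, (B) $d\ge 4$, (C) $d=3$ and $q^4=-1$) to establish $g=-q^{\pm 2}g^*$, and finally pins $(e,g,g^*)$ to one of four explicit triples. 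None of this can be replaced by the isomorphism-class reasoning you invoke, because isomorphism of the second Leonard system only determines $A^\varepsilon$ up to conjugation by an operator commuting with $A^*$, i.e. up to conjugation by polynomials in $A^*$, which is a large family.

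There is a second circularity in the surjectivity sketch itself. You argue that ``by Theorem~\ref{thm4.1} the third eigenvalue sequence of any associated Leonard triple system equals $\{c_1q^{2i-d}+c_1^{-1}q^{d-2i}\}$ or its reversal'' and then match against the given sequence to force $c_1\in\{c,c^{-1}\}$. But Theorem~\ref{thm4.1} describes the eigenvalues of the operator $A^{\varepsilon\#}$ constructed from $(A,A^*)$ via Theorem~\ref{thm2.3}, not of the given $A^\varepsilon$. You are implicitly assuming $A^\varepsilon=A^{\varepsilon\#}$ in order to argue for $c_1\in\{c,c^{-1}\}$, which is what you are ultimately trying to prove. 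Finally, the injectivity step---``imposing all three cyclically permuted orbit conditions simultaneously leaves only two candidates''---is asserted without proof; because the eight-element orbit from Lemma~\ref{lem1.8} involves sign changes $(-1)^d$, $(-1)^{d+1}$ depending on parity and the characteristic of $\mathbb{K}$, this claim needs verification. (It is fixable, e.g.\ by Lemma~\ref{lem6.1}-style uniqueness from the eigenvalue sequences and a single $\mathbb{Z}_3$-symmetric relation, but you did not give the argument.)
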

\begin{proof} Let $\Psi=(A;\{E_i\}^d_{i=0};A^*;\{E_i^*\}^d_{i=0};A^\varepsilon;\{E_i^\varepsilon\}^d_{i=0})$ denote a Leonard triple system on $V$ that has QRacah type. It suffices to show that there exists a unique element of $T$-$QRAC_{red}$ that corresponds to $\Psi$ via $\pi.$ Let $\{\theta_i\}^d_{i=0},$ $\{\theta_i^*\}^d_{i=0},$ $\{\theta_i^\varepsilon\}^d_{i=0}$ denote the first, second, third eigenvalue sequences of $\Psi,$ respectively. By Definition~\ref{defn1.12} there exist nonzero $a,b,c,q\in \mathbb{K}$ that satisfy (\ref{e11})--(\ref{e13}).  Let
\begin{align*}
\Phi&=(A;\{E_i\}^d_{i=0};A^*;\{E_i^*\}^d_{i=0}),\\
\Phi'&=(A^*;\{E_i^*\}^d_{i=0};A^\varepsilon;\{E_i^\varepsilon\}^d_{i=0}),\\
\Phi''&=(A^\varepsilon;\{E_i^\varepsilon\}^d_{i=0};A;\{E_i\}^d_{i=0}).
\end{align*}
By construction each of $\Phi,$ $\Phi',$ $\Phi''$ is a Leonard system of QRacah
type. By Corollary~\ref{lem1.10} and Lemma~\ref{lem1.19} there exist nonzero scalars $x,y,z$ in $\mathbb{K}$ such that each of
$$
(a,b,x;q),\qquad (b,c,y;q), \qquad (c,a,z;q)
$$
is in  $QRAC_{red}$ and correspond to $\Phi,$ $\Phi',$ $\Phi''$ respectively. Moreover each of $x,y,z$ is unique up to inversion. By Theorem~\ref{thm6.1} there exist unique $e,f,f^*,g,g^*\in
\mathbb{K}$ such that
\begin{align}\label{e6.17}
A^\varepsilon=eI+fA+f^*A^*+gAA^*+g^*A^*A.
\end{align}
Let $\varphi_i$ $(1\leq i\leq d)$ denote the scalar in $\mathbb{K}$ obtained by changing $c$ to $x$ in (\ref{e9}). Then $\{\varphi_i\}^d_{i=1}$ is the first split sequence of $\Phi.$ Let
$\natural:{\rm End}(V)\rightarrow {\rm Mat}_{d+1}(\mathbb{K})$ denote the natural map for $\Phi.$ Recall that $A^\natural$ (resp. $A^{*\natural}$) is the matrix on the left (resp. right) in (\ref{e3.12}). Using this and (\ref{e6.17}) we find the matrix $A^{\varepsilon\natural}$ is tridiagonal with entries
\begin{align}
(A^{\varepsilon\natural})_{i,i-1}&=f+g\theta_{i-1}^*+g^*\theta_i^*, \label{e6.13}\\
(A^{\varepsilon\natural})_{i-1,i}&=\varphi_i(f^*+g\theta_{i-1}+g^*\theta_i) \label{e6.14}
\end{align}
for $1\leq i\leq d$ and
\begin{align}
(A^{\varepsilon\natural})_{ii}~~&=e+f\theta_i+f^*\theta_i^*+g(\theta_i\theta_i^*+\varphi_i)+g^*(\theta_i\theta_i^*+\varphi_{i+1}) \label{e6.15}
\end{align}
for $0\leq i\leq d.$ Applying Lemma~\ref{lem2.9}
to $(A^\varepsilon,A)$ and $(c,a,z;q)$  the Askey-Wilson relations for $(A^\varepsilon,A)$ are
\begin{align}
&A^2A^\varepsilon-(q^2+q^{-2}) AA^\varepsilon A+A^\varepsilon
A^2+(q^2-q^{-2})^2 A^\varepsilon =\omega A+\eta\hspace{0.45mm} I,\label{e6.9}\\
&A^{\varepsilon2}A-(q^2+q^{-2}) A^\varepsilon A A^\varepsilon+A
A^{\varepsilon2}+(q^2-q^{-2})^2 A =\omega A^\varepsilon+\eta^\varepsilon I,\label{e6.11}
\end{align}
where
\begin{align*}
\omega&=-(q-q^{-1})^2\big((c+c^{-1})(a+a^{-1})+(z+z^{-1})(q^{d+1}+q^{-d-1})\big),\\
\eta&=(q-q^{-1})(q^2-q^{-2})\big((a+a^{-1})(z+z^{-1})+(c+c^{-1})(q^{d+1}+q^{-d-1})\big),\\
\eta^\varepsilon&=(q-q^{-1})(q^2-q^{-2})\big((z+z^{-1})(c+c^{-1})+(a+a^{-1})(q^{d+1}+q^{-d-1})\big).
\end{align*}
In what follows, when
referring to (\ref{e6.11}), (\ref{e6.9}) we mean the
relations obtained by applying $\natural$
to (\ref{e6.9}), (\ref{e6.11}) respectively. For convenience (\ref{e6.13})--(\ref{e6.15}) will be used tacitly to evaluate $A^{\varepsilon\natural}.$

We now find the values of $e,$ $f,$ $f^*,$ $g,$ $g^*.$ Concerning the $(2,0)$-entry of either side of (\ref{e6.9}), the right-hand side is zero. We evaluate the left-hand side and by (RQRAC1),  (RQRAC2) some factors in the resulting equation are nonzero. Eliminating those factors we obtain
\begin{align}
e+(q^2+1+q^{-2})(aq^{2-d}+a^{-1}q^{d-2})f+\kappa g+\kappa g^*&=0, \label{e6.3}
\end{align}
where
$$
\kappa=(a+a^{-1})(b+b^{-1})+(x+x^{-1})(q^{d+1}+q^{-d-1}).
$$
Similarly we evaluate $(3,1)$-entry of either side of (\ref{e6.9}) and obtain
\begin{align}
e+(q^2+1+q^{-2})(aq^{4-d}+a^{-1}q^{d-4})f+\kappa g+\kappa g^*&=0. \label{e6.4}
\end{align}
Subtracting (\ref{e6.3}) from (\ref{e6.4}) we find
\begin{align}\label{e6.5}
(q^3-q^{-3})(aq^{3-d}-a^{-1}q^{d-3})f=0.
\end{align}
In the left-hand side of (\ref{e6.5}) the first term is nonzero by (RQRAC2) with $i=3$ and the second term is nonzero by (RQRAC3). Therefore $f=0.$ Now (\ref{e6.3}) becomes
\begin{align}
e=-\kappa(g+g^*). \label{e6.6}
\end{align}
Similarly, under the natural map for $\Phi^*$ consider the $(2,0)$-entry and $(3,1)$-entry of either side of the Askey-Wilson relation
$$
A^{*2}A^\varepsilon-(q^2+q^{-2}) A^*A^\varepsilon A^*+A^\varepsilon
A^{*2}+(q^2-q^{-2})^2 A^\varepsilon =\varpi A^*+\zeta^* I
$$
where
\begin{align*}
\varpi&=-(q-q^{-1})^2\big((b+b^{-1})(c+c^{-1})+(y+y^{-1})(q^{d+1}+q^{-d-1})\big),\\
\zeta^*&=(q-q^{-1})(q^2-q^{-2})\big((y+y^{-1})(b+b^{-1})+(c+c^{-1})(q^{d+1}+q^{-d-1})\big).
\end{align*}
One can show $f^*=0.$ We now show that
\begin{align}\label{e6.19}
g=-q^2g^* \qquad \hbox{or}\qquad  g=-q^{-2}g^*.
\end{align}
To do this we divide the argument into the three cases: (A) $q^4\not=-1;$ (B) $d\geq 4;$ (C) $d=3$ and $q^4=-1.$

Case (A): Concerning the $(3,0)$-entry of either side of (\ref{e6.11}), the right-hand side is zero. We evaluate the left-hand side and replace $e$ by (\ref{e6.6}). By (RQRAC1) and (RQRAC2) some factors in the resulting equation are nonzero. Eliminating those factors we obtain
\begin{align}\label{e6.7}
(q^2+q^{-2})(q^{-1}g+qg^*)(qg+q^{-1}g^*)=0.
\end{align}
In the left-hand side of (\ref{e6.7}) the first term is nonzero since $q^4\not=-1.$  Therefore (\ref{e6.19}) holds.

Case (B): In the left-hand side of (\ref{e6.7}) the first term is nonzero by (RQRAC2) with $i=4.$ Therefore (\ref{e6.19}) holds.

Case (C): Concerning the $(2,0)$-entry of either side of (\ref{e6.11}), the right-hand side is zero. We evaluate the left-hand side and replace $e$ by (\ref{e6.6}). By (RQRAC1) and (RQRAC2) some factors in the resulting equation are nonzero. Eliminating those factors we obtain
$$
\nu(qg+q^{-1}g^*)(q^{-1}g+qg^*)=0,
$$
where $\nu=2(a+a^{-1})-(b+b^{-1})(x+x^{-1}).$
Similarly, under the natural map for $\Phi^*$ we consider the $(2,0)$-entry of either side of the Askey-Wilson relation
$$
A^{\varepsilon2}A^*-(q^2+q^{-2}) A^\varepsilon A^* A^\varepsilon+A^*
A^{\varepsilon2}+(q^2-q^{-2})^2 A^* =\varpi A^\varepsilon+\zeta I
$$
where
$
\zeta=(q-q^{-1})(q^2-q^{-2})\big((c+c^{-1})(y+y^{-1})+(b+b^{-1})(q^{d+1}+q^{-d-1})\big),
$
and obtain
$$
\nu^*(qg+q^{-1}g^*)(q^{-1}g+qg^*)=0,
$$
where $\nu^*=2(b+b^{-1})-(a+a^{-1})(x+x^{-1}).$ To get (\ref{e6.19}) we show that
$\nu\not=0$ or $\nu^*\not=0$ by contradiction. By (RQRAC3) we have $a^2\not=-1$ and $b^2\not=-1.$ We use this to solve for $b,$ $x$ in
$\nu=0$ and $\nu^*=0,$ and get that $(b,x)$ is one of the following pairs:
$$
(a,1),\quad (a^{-1},1),\quad (-a,-1),\quad (-a^{-1},-1),
$$
any of which contradicts (RQRAC4). This proves (\ref{e6.19}).

Combining (\ref{e6.6}) with (\ref{e6.19}) we find
\begin{align}\label{e6.8}
e=\left\{
\begin{array}{ll}
\kappa q(q-q^{-1}) g^* \qquad &\hbox{if $g=-q^2g^*,$}\\
-\kappa q^{-1}(q-q^{-1}) g^*\qquad \qquad &\hbox{if $g=-q^{-2}g^*.$}
\end{array}
\right.
\end{align}
Concerning (\ref{e6.11}), the $(0,1)$-entry and $(1,0)$-entry of the right-hand side are equal to $\omega(A^{\varepsilon\natural})_{01}$ and $\omega(A^{\varepsilon\natural})_{10},$ respectively. Therefore the $(0,1)$-entry of the left-hand side multiplied by $(A^{\varepsilon\natural})_{10}$ minus the $(1,0)$-entry of the left-hand side multiplied by $(A^{\varepsilon\natural})_{01}$ is equal to zero. On the other hand,  we directly evaluate the difference and replace $e,$ $g$ by (\ref{e6.8}), (\ref{e6.19}) respectively. By (RQRAC1), (RQRAC2), (RQRAC4) some factors in the resulting equation are nonzero. Eliminating those factors we get
\begin{align}\label{e6.10}
\left\{
\begin{array}{ll}
g^*(g^*(q^2-q^{-2})+q^{-1})(g^*(q^2-q^{-2})-q^{-1})=0 \qquad \qquad &\hbox{if $g=-q^2g^*,$}\\
g^*(g^*(q^2-q^{-2})+q)(g^*(q^2-q^{-2})-q)=0\qquad &\hbox{if $g=-q^{-2}g^*.$}
\end{array}
\right.
\end{align}
If $g^*=0$ then $g=0$ by (\ref{e6.19}) and $e=0$ by (\ref{e6.8}) hence $A^\varepsilon=0,$ a contradiction. Therefore $g^*\not=0$ and so (\ref{e6.10}) yields
\begin{align}\label{e6.12}
\left\{
\begin{array}{ll}
g^*={\displaystyle \frac{q^{-1}}{q^2-q^{-2}}} \quad {\rm or} \quad g^*={\displaystyle \frac{-q^{-1}}{q^2-q^{-2}}} \qquad \qquad &\hbox{if $g=-q^2g^*,$}\\
g^*={\displaystyle \frac{q}{q^2-q^{-2}}} \quad {\rm or} \quad g^*={\displaystyle \frac{-q}{q^2-q^{-2}}} \qquad &\hbox{if $g=-q^{-2}g^*.$}
\end{array}
\right.
\end{align}
Combining  (\ref{e6.19}) with (\ref{e6.8}) and (\ref{e6.12}) we find $(e,g,g^*)$ is one
of the following sequences:
\begin{align*}
&\hbox{(I)}\quad {\displaystyle \bigg(\frac{\kappa}{q+q^{-1}},\frac{-q}{q^2-q^{-2}},\frac{q^{-1}}{q^2-q^{-2}}
\bigg)}; \qquad \quad
&\hbox{(II)} \quad  {\displaystyle\bigg(\frac{-\kappa}{q+q^{-1}},\frac{q}{q^2-q^{-2}},\frac{-q^{-1}}{q^2-q^{-2}}
\bigg);}\\
&\hbox{(III)}\quad {\displaystyle\bigg(\frac{\kappa}{q+q^{-1}},\frac{q^{-1}}{q^2-q^{-2}},\frac{-q}{q^2-q^{-2}}\bigg)};
&\hbox{(IV)}\quad
 {\displaystyle\bigg(\frac{-\kappa}{q+q^{-1}},\frac{-q^{-1}}{q^2-q^{-2}},\frac{q}{q^2-q^{-2}}\bigg)}.
\end{align*}
By Corollary~\ref{lem2.8} the Leonard system $\Phi$ corresponds to
\begin{align}\label{e6.16}
\begin{split}
\begin{array}{ll}
(a,b,x;q), \qquad  \qquad &((-1)^da,(-1)^db,(-1)^{d+1}x; -q),\\
(a,b,x^{-1};q), \qquad \qquad &((-1)^da, (-1)^db, (-1)^{d+1}x^{-1}; -q),\\
(a^{-1},b^{-1},x^{-1};q^{-1}), \qquad  \qquad&((-1)^da^{-1},(-1)^db^{-1},(-1)^{d+1}x^{-1};-q^{-1}),\\
(a^{-1},b^{-1},x;q^{-1}), \qquad  \qquad &((-1)^da^{-1},(-1)^db^{-1},(-1)^{d+1}x;-q^{-1})
\end{array}
\end{split}
\end{align}
and no other elements of $QRAC_{red}.$ We now divide the argument into the cases (I)--(IV).

Case (I): Applying Theorem~\ref{thm2.3} to $(a,b,x;q)$ or $(a,b,x^{-1};q),$ the corresponding element in ${\rm End}(V)$ is exactly $A^\varepsilon.$ By Theorem~\ref{thm4.1} the sequences $(a,b,x;q),$ $(a,b,x^{-1};q)$ are in $T$-$QRAC_{red}.$ Moreover  $\theta_i^\varepsilon=xq^{2i-d}+x^{-1}q^{d-2i}$ for $0\leq i\leq d$ or $\theta_i^\varepsilon=xq^{d-2i}+x^{-1}q^{2i-d}$ for $0\leq i\leq d,$ and this implies $x=c$ or $x=c^{-1}$ respectively. Therefore $(a,b,c;q)$ is in $T$-$QRAC_{red}$ and corresponds to $\Phi.$ Moreover $A^\varepsilon$ is the corresponding element in ${\rm End}(V)$ from Theorem~\ref{thm2.3}. By Lemma~\ref{lem4.2} and Lemma~\ref{lem4.4} the Leonard systems $\Phi'$ and $\Phi''$ correspond to $(b,c,a;q)$ and $(c,a,b;q),$ respectively. We have shown that $\Psi$ and $(a,b,c;q)$ correspond via $\pi.$ It is routine to check that each sequence in (\ref{e6.16}) doesn't correspond to $\Psi$ via $\pi$ other than $(a,b,c;q).$ Therefore
$
(a,b,c;q)
$
is the unique element of $T$-$QRAC_{red}$ that corresponds to $\Psi$ via $\pi.$

Case (II): Applying Theorem~\ref{thm2.3} to $((-1)^da,(-1)^db,(-1)^{d+1}x;-q)$ or $((-1)^da,(-1)^db,(-1)^{d+1}$ $x^{-1};-q),$ the corresponding element in ${\rm End}(V)$ is exactly $A^\varepsilon.$ By the similar argument as case (I) we find $x=-c$ or $x=-c^{-1}$ and
$
((-1)^da,(-1)^db,(-1)^dc;-q)
$
is the unique element of $T$-$QRAC_{red}$ that corresponds to $\Psi$ via $\pi.$

Case (III): Applying Theorem~\ref{thm2.3} to $(a^{-1},b^{-1},x^{-1};q^{-1})$ or $(a^{-1},b^{-1},x;q^{-1}),$ the corresponding element in ${\rm End}(V)$ is exactly $A^\varepsilon.$ By the similar argument as case (I) we find $x=c$ or $x=c^{-1}$ and $(a^{-1},b^{-1},c^{-1};q^{-1})$ is the unique element of $T$-$QRAC_{red}$ that corresponds to $\Psi$ via $\pi.$

Case (IV): Applying Theorem~\ref{thm2.3} to $((-1)^da^{-1},(-1)^db^{-1},(-1)^{d+1}x^{-1};-q^{-1})$ or $((-1)^da^{-1},(-1)^d$ $b^{-1},(-1)^{d+1}x;-q^{-1}),$ the corresponding element in ${\rm End}(V)$ is exactly $A^\varepsilon.$ By the similar argument as (I) we find $x=-c$ or $x=-c^{-1}$ and $((-1)^da^{-1},(-1)^db^{-1},(-1)^dc^{-1};-q^{-1})$ is the unique element of $T$-$QRAC_{red}$ that corresponds to $\Psi$ via $\pi.$ We have completed the argument for the cases (I)--(IV). The result follows.
\end{proof}

In Theorem~\ref{thm6.2} we showed that $\pi$ is a bijection. We now describe $\pi^{-1}.$

\begin{lem}\label{lem6.1}
Let $\Psi=(A;\{E_i\}^d_{i=0};A^*;\{E_i^*\}^d_{i=0};A^\varepsilon;\{E_i^\varepsilon\}^d_{i=0})$ denote a Leonard triple system of QRacah type. Let $\{\theta_i\}^d_{i=0}, \{\theta_i^*\}^d_{i=0}, \{\theta_i^\varepsilon\}^d_{i=0}$ denote the first, second, and third eigenvalue sequences for $\Psi,$ respectively. Let $(a,b,c;q)$ denote the preimage with respect to $\pi,$ for the isomorphism class of $\Psi.$ Then $(a,b,c;q)$ is the unique sequence of scalars in $\mathbb{K}$ that satisfies {\rm (\ref{e15})--(\ref{e14})} and {\rm (\ref{e11})--(\ref{e13}).}
\end{lem}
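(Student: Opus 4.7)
The plan is to split the claim into existence and uniqueness.

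For existence I would unwind Definition~\ref{defn4.1}. Since $(a,b,c;q)=\pi^{-1}([\Psi])$, that definition furnishes a Leonard triple system in the isomorphism class of $\Psi$ satisfying clauses (i)--(iv); transporting along an isomorphism of Leonard triple systems, I may assume it is $\Psi$ itself. Clause (iv) is precisely (\ref{e15})--(\ref{e14}). Clause (i) says $(A;\{E_i\}^d_{i=0};A^*;\{E_i^*\}^d_{i=0})$ has QRacah type corresponding to $(a,b,c;q)$, so by (\ref{e7})--(\ref{e8}) its eigenvalue and dual eigenvalue sequences are given by (\ref{e11}), (\ref{e12}); clause (ii) applied to $(A^*;\{E_i^*\}^d_{i=0};A^\varepsilon;\{E_i^\varepsilon\}^d_{i=0})$ corresponding to $(b,c,a;q)$ gives (\ref{e13}) in the same way.

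For uniqueness, suppose $(a',b',c';q')$ is any $4$-tuple in $\mathbb{K}$ satisfying (\ref{e11})--(\ref{e13}) and (\ref{e15})--(\ref{e14}). Writing (\ref{e14}) once for $(a,b,c;q)$ and once for $(a',b',c';q')$ and subtracting, the $A^\varepsilon$ term cancels and one obtains
\[
\Bigl(\tfrac{q}{q^2-q^{-2}}-\tfrac{q'}{q'^2-q'^{-2}}\Bigr)AA^*-\Bigl(\tfrac{q^{-1}}{q^2-q^{-2}}-\tfrac{q'^{-1}}{q'^2-q'^{-2}}\Bigr)A^*A \;=\; \mu\hspace{0.45mm} I
\]
for some $\mu\in\mathbb{K}$. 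Because $d\geq 3\geq 2$, Theorem~\ref{thm6.1} applied to the Leonard system $(A;\{E_i\}^d_{i=0};A^*;\{E_i^*\}^d_{i=0})$ says that $I, A, A^*, AA^*, A^*A$ are linearly independent in ${\rm End}(V)$. Therefore the coefficients of $AA^*$ and $A^*A$ above must both vanish, yielding $\frac{q}{q^2-q^{-2}}=\frac{q'}{q'^2-q'^{-2}}$ and $\frac{q^{-1}}{q^2-q^{-2}}=\frac{q'^{-1}}{q'^2-q'^{-2}}$. Dividing these two equations gives $q^2=q'^2$, so $q'=\pm q$; reinserting $q'=-q$ into the first forces $q=-q$, contradicting (T-RQRAC1). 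Hence $q'=q$.

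With $q'=q$, evaluating (\ref{e11}) at $i=0,1$ gives $\theta_0$ and $\theta_1$ expressed both in terms of $(a,q)$ and in terms of $(a',q)$; Lemma~\ref{lem1.7} then forces $a'=(q^d\theta_1-q^{d-2}\theta_0)/(q^2-q^{-2})=a$. Applying the same lemma to (\ref{e12}) and (\ref{e13}) gives $b'=b$ and $c'=c$, completing uniqueness. The main subtle point is the invocation of Theorem~\ref{thm6.1}: the sequences (\ref{e11})--(\ref{e13}) on their own admit the symmetries $(a,b,c;q)\leftrightarrow(a^{-1},b^{-1},c^{-1};q^{-1})$ and (with signs) $q\mapsto -q$ of Lemma~\ref{lem1.8}, and it is precisely the asymmetry between $q$ and $q^{-1}$ in the Askey-Wilson combination $qAA^*-q^{-1}A^*A$, together with linear independence of the basis, that pins $q$ down.
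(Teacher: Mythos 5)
Your proof is correct and takes essentially the same route as the paper: both decompose into existence (reading off clauses (i), (ii), (iv) of Definition~\ref{defn4.1}) and uniqueness (subtract one of the $\mathbb{Z}_3$-symmetric relations written for the two parameter tuples, invoke linear independence from Theorem~\ref{thm6.1} to pin down $q$, then finish with Lemma~\ref{lem1.7}). The only cosmetic difference is that you difference relation (\ref{e14}) and apply Theorem~\ref{thm6.1} to the Leonard system $(A;\{E_i\};A^*;\{E_i^*\})$, whereas the paper differences (\ref{e15}) and uses linear independence of $A^*A^\varepsilon, A^\varepsilon A^*, I$; also your remark that $q'=-q$ ``contradicts (T-RQRAC1)'' is slightly loose (in characteristic 2 there is no contradiction, but then $-q=q$ and the conclusion $q'=q$ holds trivially anyway), a subtlety the paper glosses over too.
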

\begin{proof}
We first show that $(a,b,c;q)$ satisfies (\ref{e15})--(\ref{e14}) and (\ref{e11})--(\ref{e13}).
The sequence $(a,b,c;q)$ satisfies (\ref{e11}), (\ref{e12}) by Definition~\ref{defn4.1}(i) and satisfies (\ref{e13}) by Definition~\ref{defn4.1}(ii). The sequence $(a,b,c;q)$ satisfies (\ref{e15})--(\ref{e14}) by Definition~\ref{defn4.1}(iv). Next we show that $(a,b,c;q)$ are uniquely determined by (\ref{e15})--(\ref{e14}) and (\ref{e11})--(\ref{e13}). Suppose we are given a sequence of scalars $(x,y,z;t)$ in $\mathbb{K}$ that satisfies
\begin{align}
\frac{tA^*A^\varepsilon-t^{-1}A^\varepsilon A^*}{t^2-t^{-2}}+A&=\frac{(y+y^{-1})(z+z^{-1})+(x+x^{-1})(t^{d+1}+t^{-d-1})}{t+t^{-1}}~I\label{et-15}
\end{align}
and
\begin{align*}
\theta_i=xt^{2i-d}+x^{-1}t^{d-2i}\qquad \quad \hbox{$(0\leq i\leq d),$}\\
\theta_i^*=yt^{2i-d}+y^{-1}t^{d-2i}\qquad \quad \hbox{$(0\leq i\leq d),$}\\
\theta_i^\varepsilon=zt^{2i-d}+z^{-1}t^{d-2i}\qquad \quad \hbox{$(0\leq i\leq d).$}
\end{align*}
We show $(x,y,z;t)=(a,b,c;q).$ By Theorem~\ref{thm6.1} the elements $A^*A^\varepsilon,$ $A^\varepsilon A^*,$ $I$
are linearly independent. Comparing (\ref{e15}), (\ref{et-15}) in this light, we find
\begin{align*}
t^{-1}(t^2-t^{-2})&= q^{-1}(q^2-q^{-2}),\\
t(t^2-t^{-2})&= q(q^2-q^{-2}).
\end{align*}
Solving the above two equations for $t$ we find $t=q.$ Now by Lemma~\ref{lem1.7}, $x=a.$ Similarly $y=b$ and $z=c.$ The result follows.
\end{proof}

\section{Twin pairs of Leonard triple systems}

\begin{defn}\label{defn7.1}
Let $\Psi$ and $\Psi'$ denote Leonard triple systems over $\mathbb{K}.$
We say that $\Psi$ and $\Psi'$ are {\it twins} whenever
\begin{enumerate}
\item[{\rm (i)}] $\Psi$ and $\Psi'$ have the same first eigenvalue sequence;

\item[{\rm (ii)}] $\Psi$ and $\Psi'$ have the same second eigenvalue sequence;

\item[{\rm (iii)}] $\Psi$ and $\Psi'$ have the same third eigenvalue sequence.
\end{enumerate}
\end{defn}

As we will see, there exist twin pairs of Leonard triple systems that are not isomorphic.

\begin{lem}\label{lem7.1}
Let $\Psi=(A;\{E_i\}^d_{i=0};A^*;\{E_i^*\}^d_{i=0};A^\varepsilon;\{E_i^\varepsilon\}^d_{i=0})$ denote a Leonard triple system on $V$ that has QRacah type.
Let $\dag$ denote the antiautomorphism of ${\rm End}(V)$ that corresponds to the Leonard pair $(A,A^*),$ in the sense of Definition~\ref{defn1.4}. Suppose $(a,b,c;q) \in T$-$QRAC_{red}$ corresponds to $\Psi$ via $\pi.$ Then $(a^{-1},b^{-1},c^{-1};q^{-1}) \in T$-$QRAC_{red}$ corresponds to $\Psi^\dag$ via $\pi.$ Moreover $\Psi$ and $\Psi^\dag$ are nonisomorphic twins.
\end{lem}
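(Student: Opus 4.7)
The plan is to apply the antiautomorphism $\dag$ pointwise to every ingredient of $\Psi$, verify that the resulting sequence $\Psi^\dag$ is itself a Leonard triple system of QRacah type sharing all three eigenvalue sequences with $\Psi$, then use the uniqueness statement Lemma~\ref{lem6.1} to identify its preimage under $\pi$, and finally invoke the bijectivity of $\pi$ from Theorem~\ref{thm6.2} to exclude an isomorphism.

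First I would note that, since $A^\dag=A$ and $A^{*\dag}=A^*$ and since each $E_i$ (resp.\ $E_i^*$) is a polynomial in $A$ (resp.\ $A^*$), we have $E_i^\dag=E_i$ and $E_i^{*\dag}=E_i^*$. Hence $\Psi^\dag$ takes the form
\[
\Psi^\dag=(A;\{E_i\}_{i=0}^d;A^*;\{E_i^*\}_{i=0}^d;A^{\varepsilon\dag};\{E_i^{\varepsilon\dag}\}_{i=0}^d).
\]
Checking the axioms of Definition~\ref{defn1.6} for $\Psi^\dag$ is routine: each vanishing/non-vanishing condition on an expression of the shape $EBE'$ is transported by $\dag$ (using $(XYZ)^\dag=Z^\dag Y^\dag X^\dag$) into the same condition with the two idempotents swapped, and the symmetry $|i-j|=|j-i|$ ensures equivalence. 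Multiplicity-freeness of $A^{\varepsilon\dag}$ is inherited because $\dag$ preserves the minimal polynomial, and applying $\dag$ to $A^\varepsilon E_i^\varepsilon=\theta_i^\varepsilon E_i^\varepsilon$ gives $A^{\varepsilon\dag}E_i^{\varepsilon\dag}=\theta_i^\varepsilon E_i^{\varepsilon\dag}$, so $E_i^{\varepsilon\dag}$ is the primitive idempotent of $A^{\varepsilon\dag}$ at $\theta_i^\varepsilon$. Consequently $\Psi^\dag$ is a Leonard triple system, the three eigenvalue sequences of $\Psi^\dag$ agree with those of $\Psi$, and by (\ref{e11})--(\ref{e13}) the triple system has QRacah type. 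Thus $\Psi$ and $\Psi^\dag$ are twins in the sense of Definition~\ref{defn7.1}.

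The core step is to identify $\pi^{-1}$ of the isomorphism class of $\Psi^\dag$. Membership $(a^{-1},b^{-1},c^{-1};q^{-1})\in T$-$QRAC_{red}$ is immediate because each of (T-RQRAC1)--(T-RQRAC4) is symmetric under simultaneous inversion of $a,b,c,q$, and the formulas (\ref{e11})--(\ref{e13}) are preserved by this substitution. It remains to verify the three $\mathbb{Z}_3$-symmetric Askey--Wilson relations (\ref{e15})--(\ref{e14}) for $(A,A^*,A^{\varepsilon\dag})$ with the inverted parameters. For this I would apply $\dag$ to each of (\ref{e15})--(\ref{e14}): since $\dag$ reverses products while fixing $A$ and $A^*$, and since the scalar right-hand sides depend only on the inversion-invariant quantities $x+x^{-1}$ and $q^{d+1}+q^{-d-1}$, the resulting relations coincide with (\ref{e15})--(\ref{e14}) under the substitution $a\to a^{-1}$, $b\to b^{-1}$, $c\to c^{-1}$, $q\to q^{-1}$, $A^\varepsilon\to A^{\varepsilon\dag}$ (after clearing the sign from $q^{-2}-q^2=-(q^2-q^{-2})$). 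Lemma~\ref{lem6.1} then pins down $(a^{-1},b^{-1},c^{-1};q^{-1})$ as the unique preimage of the isomorphism class of $\Psi^\dag$ under $\pi$.

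For the last claim, an isomorphism $\Psi\cong\Psi^\dag$ would, by injectivity of $\pi$, force $(a,b,c;q)=(a^{-1},b^{-1},c^{-1};q^{-1})$, in particular $q^2=1$, contradicting (T-RQRAC2) at $i=1$. The main point of care is in the Askey--Wilson step: one must align the fact that $\dag$ reverses the order of multiplication with the parameter flip $q\to q^{-1}$ (which changes the sign of $q^2-q^{-2}$) and confirm that these two effects cancel term by term; Lemma~\ref{lem6.1} is what makes this manageable, since it reduces identification of the preimage to matching a small list of equations rather than tracking the entire Leonard triple system data.
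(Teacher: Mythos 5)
Your proof is correct and follows essentially the same route as the paper's: apply $\dag$ componentwise to $\Psi$, observe that $A,A^*$ and their idempotents are fixed, that the eigenvalue sequences are unchanged (giving the twin property), identify the preimage under $\pi$ as $(a^{-1},b^{-1},c^{-1};q^{-1})$, and then use injectivity of $\pi$ together with (T-RQRAC2) to rule out an isomorphism. The only variation is that you invoke Lemma~\ref{lem6.1} to characterize the preimage via the $\mathbb{Z}_3$-symmetric Askey--Wilson relations and eigenvalue sequences, whereas the paper's terse ``routine to verify'' gestures at checking the four conditions of Definition~\ref{defn4.1} directly; your Lemma~\ref{lem6.1} shortcut is a legitimate and somewhat cleaner way to pin down the preimage, since the anti-automorphism reverses the noncommutative products in (\ref{e15})--(\ref{e14}) at exactly the rate that the sign flip in $q^2-q^{-2}$ under $q\mapsto q^{-1}$ compensates.
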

\begin{proof}
By construction $\Psi$ and $\Psi^\dag$ are twins. It is routine to verify that $(a^{-1},b^{-1},c^{-1};q^{-1})$ is in $T$-$QRAC_{red}$ according to Definition~\ref{defn1.13} and $(a^{-1},b^{-1},c^{-1};q^{-1})$ corresponds to $\Psi^\dag$ via $\pi$ according to Definition~\ref{defn4.1}. By (T-RQRAC2) with $i=1$ the sequences $(a,b,c;q)$ and $(a^{-1},b^{-1},c^{-1};q^{-1})$ are different. Therefore $\Psi$ and $\Psi^\dag$ are not isomorphic by Theorem~\ref{thm6.2}. The result follows.
\end{proof}

In this section we classify up to isomorphism all the twin pairs of Leonard triple systems that have QRacah type. To obtain the classification we use the bijection $\pi$ from Definition~\ref{defn4.1}.

\begin{defn}\label{defn7.2}
Let $(a,b,c;q)$ and $(a',b',c';q')$ denote two elements of $T$-$QRAC_{red}.$ We call these elements {\it twins} whenever the corresponding Leonard triple systems via $\pi$ are twins in the sense of Definition~\ref{defn7.1}.
\end{defn}

Observe that twin is an equivalence relation. We now describe the equivalence classes of the twin relation.

\begin{thm}\label{prop8.1}
Let $(a,b,c;q)$ denote an element of $T$-$QRAC_{red}.$ Then the twins of $(a,b,c;q)$ are
displayed below. There are two cases.
\begin{enumerate}
\item[{\rm (i)}] Assume that none of $abc,$ $a^{-1}bc,$ $ab^{-1}c,$ $abc^{-1}$ is among $-q^{d-1}, -q^{d-3},\ldots , -q^{1-d}.$ Then the twins of $(a,b,c;q)$ are
\begin{align}\label{e7.1}
\begin{array}{ll}
(a,b,c;q), \qquad \qquad &((-1)^da,(-1)^db,(-1)^dc;-q),\\
(a^{-1},b^{-1},c^{-1};q^{-1}), \qquad \qquad &((-1)^da^{-1},(-1)^db^{-1},(-1)^dc^{-1};-q^{-1}).
\end{array}
\end{align}
{\rm (}If $\mathbb{K}$ has characteristic two, then we interpret {\rm (\ref{e7.1})} having
only two elements{\rm )}.

\item[{\rm (ii)}] Assume that some of $abc,$ $a^{-1}bc,$ $ab^{-1}c,$ $abc^{-1}$ is among $-q^{d-1}, -q^{d-3},\ldots, -q^{1-d}.$ Then the twins of $(a,b,c;q)$ are
$$
(a,b,c;q),\qquad \quad (a^{-1},b^{-1},c^{-1};q^{-1}).
$$
\end{enumerate}
\end{thm}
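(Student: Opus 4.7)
The plan is to combine Lemma~\ref{lem6.1} with the rigidity of the eigenvalue parameterization (\ref{e11})--(\ref{e13}) to enumerate all candidate twins, then determine which of them actually lie in $T$-$QRAC_{red}$. So let $(a',b',c';q')\in T$-$QRAC_{red}$ be a twin of $(a,b,c;q)$. By Definition~\ref{defn7.2} the two associated Leonard triple systems share all three eigenvalue sequences, so both 4-tuples satisfy (\ref{e11})--(\ref{e13}) with the same left-hand sides. Applying Lemma~\ref{lem1.5} to the first eigenvalue sequence (this uses $d\geq 3$) forces $(q')^2+(q')^{-2}=q^2+q^{-2}$, whence $q'\in\{q,q^{-1},-q,-q^{-1}\}$. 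For each of these four choices, Lemma~\ref{lem1.7} applied to $\theta_0,\theta_1$ uniquely determines $a'$ in terms of $a$ and $q$, and similarly for $b'$ and $c'$. A routine computation then produces the four candidate 4-tuples displayed in (\ref{e7.1}), and conversely each of these tuples is immediately seen to reproduce the same eigenvalue sequences via (\ref{e11})--(\ref{e13}): the substitution $(a,q)\mapsto(a^{-1},q^{-1})$ swaps the two summands, and $(a,q)\mapsto((-1)^d a,-q)$ multiplies each summand by $(-1)^d\cdot(-1)^{2i-d}=1$.

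Next I would test which of the four candidates lie in $T$-$QRAC_{red}$. Conditions (T-RQRAC1)--(T-RQRAC3) are invariant under the substitutions $(a,q)\mapsto(a^{-1},q^{-1})$ and $(a,q)\mapsto((-1)^d a,-q)$ (and their analogues for $b,c$), so they transfer to all four candidates. The delicate step is (T-RQRAC4). For $(a^{-1},b^{-1},c^{-1};q^{-1})$ the forbidden set $\{q^{d-1},q^{d-3},\ldots,q^{1-d}\}$ is stable under $q\mapsto q^{-1}$ and the four products $abc,a^{-1}bc,ab^{-1}c,abc^{-1}$ transform by inversion, so (T-RQRAC4) transfers automatically. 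However, for the two candidates involving $-q$ the forbidden set becomes $\{(-q)^{d-1},(-q)^{d-3},\ldots,(-q)^{1-d}\}$, each entry acquiring a sign $(-1)^{d-1}$, while the four products pick up an overall sign $(-1)^d$. Comparing signs, the membership of these two candidates in $T$-$QRAC_{red}$ is equivalent to the extra hypothesis that none of $abc,a^{-1}bc,ab^{-1}c,abc^{-1}$ lies in $\{-q^{d-1},-q^{d-3},\ldots,-q^{1-d}\}$, which is exactly the case (i) assumption.

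Combining the two stages, in case (i) all four candidates survive, while in case (ii) only the two with $q'=q^{\pm 1}$ do. The parenthetical remark in case (i) about characteristic two is automatic, since there $(-1)^d=1$ and $-q=q$ collapse the four candidates pairwise.

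The main obstacle will be the sign bookkeeping in the second paragraph: one has to track carefully how each of the four products $abc,a^{-1}bc,ab^{-1}c,abc^{-1}$ and the forbidden set transform under $q\mapsto -q$, and verify that the induced constraint on the transformed 4-tuple is precisely the case (i) hypothesis (and no more). Everything else reduces to a direct application of Theorem~\ref{thm6.2} together with the structural Lemmas~\ref{lem1.5} and \ref{lem1.7}.
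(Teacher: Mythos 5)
Your proof is correct and takes essentially the same route as the paper: apply Lemma~\ref{lem6.1} to both systems to extract the three shared eigenvalue-sequence equations, solve them (using Lemma~\ref{lem1.5} to pin down $q'\in\{q,q^{-1},-q,-q^{-1}\}$ and Lemma~\ref{lem1.7} to pin down $a',b',c'$) to get the four candidates in (\ref{e7.1}), and then check membership in $T$-$QRAC_{red}$ case by case, where the $q\mapsto -q$ candidates survive precisely under the case~(i) hypothesis. The paper compresses the solving and membership-checking steps into ``Solving\ldots we find\ldots'' and ``is in $T$-$QRAC_{red}$,'' whereas you spell out the sign bookkeeping for (T-RQRAC4), but the logic is identical.
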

\begin{proof}
Let $(x,y,z;t)\in T$-$QRAC_{red}$ which is a twin of $(a,b,c;q).$  Let $\Psi$ and $\Psi'$ denote Leonard triple systems that correspond to $(a,b,c;q)$ and $(x,y,z;t)$ via $\pi,$ respectively. Applying Lemma~\ref{lem6.1} to $\Psi$ and $\Psi'$ we find
\begin{align*}
aq^{2i-d}+a^{-1}q^{d-2i}&=xt^{2i-d}+x^{-1}t^{d-2i}\qquad \qquad \hbox{$(0\leq i\leq d),$}\\
b\hspace{0.45mm}q^{2i-d}+b^{-1}q^{d-2i}&=yt^{2i-d}+y^{-1}t^{d-2i}\qquad \qquad \hbox{$(0\leq i\leq d),$}\\
c\hspace{0.45mm}q^{2i-d}+c^{-1}q^{d-2i}&=zt^{2i-d}+z^{-1}t^{d-2i}\qquad \qquad \hbox{$(0\leq i\leq d).$}
\end{align*}
Solving the above three equations for $x,y,z,t$ we find $(x,y,z;t)$ is one of the sequences shown in (\ref{e7.1}). If none of $abc,$ $a^{-1}bc,$ $ab^{-1}c,$ $abc^{-1}$ is among $-q^{d-1}, -q^{d-3},\ldots, -q^{1-d},$ then each sequence shown in (\ref{e7.1}) is in $T$-$QRAC_{red}.$ If some of $abc,$ $a^{-1}bc,$ $ab^{-1}c,$ $abc^{-1}$ is among $-q^{d-1}, -q^{d-3},\ldots, -q^{1-d},$ then among the sequences shown in (\ref{e7.1}) only $(a,b,c;q)$ and $(a^{-1},b^{-1},c^{-1};q^{-1})$ are in $T$-$QRAC_{red}.$ The result follows.
\end{proof}

\section{A $(\mathbb{Z}_2)^3\rtimes S_3$ action on $T$-$QRAC_{red}$}
In Theorem~\ref{thm6.2} we gave a bijection $\pi$ from $T$-$QRAC_{red}$ to $T$-$QRAC.$ In Lemma~\ref{lem1.17} we gave an action of $(\mathbb{Z}_2)^3\rtimes S_3$ on $T$-$QRAC$ as a group of automorphism. Via $\pi^{-1}$ this action induces a $(\mathbb{Z}_2)^3\rtimes S_3$ action on $T$-$QRAC_{red}.$ In this section we describe the resulting $(\mathbb{Z}_2)^3\rtimes S_3$ action on $T$-$QRAC_{red}.$ As we will see, it extends the $D_4$ action on $T$-$QRAC_{red}$ that we obtained in Lemma~\ref{lem1.18}.

\begin{lem}\label{lem1.13}
There exists a unique $(\mathbb{Z}_2)^3\rtimes S_3$ action on
$T$-$QRAC_{red}$ such that
\begin{align}
(a,b,c;q)^*&=(b^{-1},a^{-1},c^{-1};q^{-1}),   \label{e5.2}\\
(a,b,c;q)^\varepsilon&=(c^{-1},b^{-1},a^{-1};q^{-1}),   \label{e5.3}\\
(a,b,c;q)^\downharpoonright&=(a,b,c^{-1};q),   \label{e5.4}\\
(a,b,c;q)^\downarrow&=(a,b^{-1},c;q),    \label{e5.5}\\
(a,b,c;q)^\Downarrow&=(a^{-1},b,c;q)    \label{e5.6}
\end{align}
for all $(a,b,c;q)\in T$-$QRAC_{red}.$
\end{lem}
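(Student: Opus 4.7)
The plan is to transport the $(\mathbb{Z}_2)^3\rtimes S_3$ action on $T$-$QRAC$ (Lemma~\ref{lem1.17}) to $T$-$QRAC_{red}$ via the bijection $\pi$ of Theorem~\ref{thm6.2}: for $\sigma\in T$-$QRAC_{red}$ and $g\in (\mathbb{Z}_2)^3\rtimes S_3$, set $\sigma^g:=\pi^{-1}(\pi(\sigma)^g)$. This is automatically a group action, so existence reduces to verifying the five formulas (\ref{e5.2})--(\ref{e5.6}). Uniqueness is immediate since $*,\varepsilon,\downharpoonright,\downarrow,\Downarrow$ generate $(\mathbb{Z}_2)^3\rtimes S_3$.

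To verify each formula I would fix $(a,b,c;q)\in T$-$QRAC_{red}$ with corresponding Leonard triple system $\Psi=(A;\{E_i\}_{i=0}^d;A^*;\{E_i^*\}_{i=0}^d;A^\varepsilon;\{E_i^\varepsilon\}_{i=0}^d)$, and show that the candidate $(a,b,c;q)^g$ lies in $T$-$QRAC_{red}$ and satisfies conditions (i)--(iv) of Definition~\ref{defn4.1} relative to $\Psi^g$. By Lemma~\ref{lem6.1} this forces $(a,b,c;q)^g$ to be the $\pi$-preimage of the isomorphism class of $\Psi^g$. The main technical engine is Lemma~\ref{lem2.3} (the $D_4$ action on $QRAC_{red}$) together with Lemma~\ref{lem2.4} (its compatibility with the correspondence between $QRAC_{red}$ and $PA$-$QRAC$), which describes how each constituent Leonard system of $\Psi$ transforms under $D_4$ at the level of parameters.

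For $g=\downarrow$ and $g=\Downarrow$, each constituent Leonard system of $\Psi$ either is fixed or has the corresponding $D_4$ element applied to it; since $\downarrow^{QRAC_{red}}$ and $\Downarrow^{QRAC_{red}}$ send $(a,b,c;q)$ to $(a,b^{-1},c;q)$ and $(a^{-1},b,c;q)$ respectively, formulas (\ref{e5.5}) and (\ref{e5.6}) drop out; the same reasoning applied to $\downharpoonright$ (which by the table preceding Definition~\ref{defn3.1} reverses only the third eigenvalue sequence) yields (\ref{e5.4}). For $g=*$, each constituent Leonard system of $\Psi$ is dualized, which by Lemma~\ref{lem2.3}(\ref{e4.6}) implements the substitution $(a,b,c;q)\mapsto(b^{-1},a^{-1},c^{-1};q^{-1})$, yielding (\ref{e5.2}). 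In each of these four cases the $\mathbb{Z}_3$-symmetric Askey-Wilson relations (\ref{e15})--(\ref{e14}) pass to the transformed triple because their right-hand sides are invariant under the relevant inversions, each symmetric scalar of the form $x+x^{-1}$ being preserved.

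The main obstacle will be the verification for $\varepsilon$, which cyclically permutes the three maps $A,A^*,A^\varepsilon$. I would argue that the three constituent Leonard systems of $\Psi^\varepsilon$, in the order dictated by Definition~\ref{defn4.1}(i)--(iii), are exactly the duals under $*$ of the three constituent Leonard systems of $\Psi$ taken in cyclic order. Applying Lemma~\ref{lem2.3}(\ref{e4.6}) to the parameter triples $(a,b,c;q)$, $(b,c,a;q)$, $(c,a,b;q)$ of the constituents of $\Psi$ produces $(b^{-1},a^{-1},c^{-1};q^{-1})$, $(c^{-1},b^{-1},a^{-1};q^{-1})$, $(a^{-1},c^{-1},b^{-1};q^{-1})$, which are precisely the parameters demanded by Definition~\ref{defn4.1}(i)--(iii) for $\Psi^\varepsilon$ with candidate $(a',b',c';q')=(c^{-1},b^{-1},a^{-1};q^{-1})$. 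Invariance of the $\mathbb{Z}_3$-symmetric Askey-Wilson relations under simultaneous cyclic rotation of $(A,A^*,A^\varepsilon)$ and $(a,b,c)$ (with $q\mapsto q^{-1}$ again leaving the symmetric RHS scalars fixed) supplies condition (iv), completing the derivation of (\ref{e5.3}).
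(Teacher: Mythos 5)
Your proposal is correct, but it takes a genuinely different route from the paper. The paper's proof is elementary and self-contained: it defines the five maps on $T$-$QRAC_{red}$ directly by the formulas (\ref{e5.2})--(\ref{e5.6}), observes that each image lies in $T$-$QRAC_{red}$, and then verifies by hand that the maps satisfy the defining relations (\ref{r2.1})--(\ref{r2.4}) of $(\mathbb{Z}_2)^3\rtimes S_3$; uniqueness follows because these generate the group. You instead \emph{transport} the action on $T$-$QRAC$ back along the bijection $\pi$ of Theorem~\ref{thm6.2}, which makes the group-action axioms automatic, and then identify the resulting action with the explicit formulas by showing, generator by generator, that the candidate $(a,b,c;q)^g$ satisfies Definition~\ref{defn4.1}(i)--(iv) for $\Psi^g$. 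Since Theorem~\ref{thm6.2} and Lemma~\ref{lem6.1} precede the present lemma, your use of them is not circular. The trade-offs are clear: the paper's route avoids invoking the heavy classification machinery and needs only a routine check of relations in the abstract group, but it must then prove separately (Lemma~\ref{lem4.1}) that the action is compatible with $\pi$; your route gets that compatibility for free by construction, essentially absorbing Lemma~\ref{lem4.1} into the existence proof, at the cost of depending on the full strength of the bijection $\pi$. Your generator-by-generator verification is terse but sound --- in particular, when handling $*$ and $\varepsilon$ the substitution $q\mapsto q^{-1}$ turns each of (\ref{e15})--(\ref{e14}) into another member of the same trio after multiplying numerator and denominator by $-1$, and for (i)--(iii) of Definition~\ref{defn4.1} one must invoke Corollary~\ref{lem2.8} to identify, e.g., a constituent corresponding to $(c,a,b;q)$ with one corresponding to $(c,a,b^{-1};q)$; these are the sort of bookkeeping details your sketch leaves implicit, but they all work out.
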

\begin{proof}
For all $(a,b,c;q)\in T$-$QRAC_{red}$ the sequence on the right in (\ref{e5.2})--(\ref{e5.6}) is contained in $T$-$QRAC_{red}.$ Define maps $*,\varepsilon,\downharpoonright,\downarrow,\Downarrow$ from $T$-$QRAC_{red}$ to $T$-$QRAC_{red}$ such that (\ref{e5.2})--(\ref{e5.6}) hold for all $(a,b,c;q) \in T$-$QRAC_{red}.$
One checks that $*,\varepsilon,\downharpoonright,\downarrow,\Downarrow$ satisfy the relations (\ref{r2.1})--(\ref{r2.4}). Therefore the desired $(\mathbb{Z}_2)^3\rtimes S_3$ action exists. This $(\mathbb{Z}_2)^3\rtimes S_3$ action is unique since $*,\varepsilon,\downharpoonright,\downarrow,\Downarrow$ generate $(\mathbb{Z}_2)^3\rtimes S_3.$
\end{proof}

\begin{lem}\label{lem4.1}
For all $g\in (\mathbb{Z}_2)^3\rtimes S_3$ the following diagram commutes.

\unitlength=1mm
\begin{picture}(120,35)
\multiput(74,6)(0,20){2}{\vector(1, 0){20}}

\put(53,25){{\small $T$-$QRAC_{red}$}}
\put(96,25){{\small $T$-$QRAC$}}
\put(53,5){{\small $T$-$QRAC_{red}$}}
\put(96,5){{\small $T$-$QRAC$}}

\multiput(61,23.5)(40.5,0){2}{\vector(0, -1){15}}
\put(81,28){{\small $\pi$}}
\put(81,2){{\small $\pi$}}
\put(57.5,16){{\small$g$}}
\put(103.5,16){{\small $g$}}
\end{picture}
\end{lem}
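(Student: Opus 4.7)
My plan is to reduce to checking commutativity for each of the five generators $*,\varepsilon,\downharpoonright,\downarrow,\Downarrow$ of $(\mathbb{Z}_2)^3\rtimes S_3,$ and for each generator $g$ verify that the transformed sequence $(a,b,c;q)^g$ as defined in Lemma~\ref{lem1.13} corresponds via $\pi$ to $\Psi^g,$ where $\Psi$ is any Leonard triple system in the isomorphism class $\pi(a,b,c;q).$ The characterization in Lemma~\ref{lem6.1} reduces this to two checks: that the three eigenvalue sequences of $\Psi^g$ match the parametrization (\ref{e11})--(\ref{e13}) using $(a,b,c;q)^g,$ and that the $\mathbb{Z}_3$-symmetric Askey--Wilson relations (\ref{e15})--(\ref{e14}) hold in the form dictated by $(a,b,c;q)^g$ after the relabeling of $A,A^*,A^\varepsilon$ induced by $g.$

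For the eigenvalue-sequence check, I will read off the three eigenvalue sequences of $\Psi^g$ from the table above Definition~\ref{defn3.1}. For the three idempotent-reversal generators $\downharpoonright,\downarrow,\Downarrow$ only a single sequence gets replaced by its reverse, so I invoke Lemma~\ref{lem1.14} which says that $\theta_{d-i}=aq^{d-2i}+a^{-1}q^{2i-d}$ is the parametrization (\ref{e11}) with $a$ replaced by $a^{-1}$ (and $q$ unchanged); this matches (\ref{e5.4})--(\ref{e5.6}). For $*$ and $\varepsilon$ the three sequences are permuted, and additionally I use the elementary identity
\[
aq^{2i-d}+a^{-1}q^{d-2i}=a^{-1}(q^{-1})^{2i-d}+a(q^{-1})^{d-2i},
\]
which shows that writing $\{\theta_i\}_{i=0}^d$ in base $q^{-1}$ replaces the parameter by its inverse; this matches (\ref{e5.2}) and (\ref{e5.3}).

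For the Askey--Wilson check I treat the two types of generators separately. For $\downharpoonright,\downarrow,\Downarrow$ the underlying triple $(A,A^*,A^\varepsilon)$ is unchanged, so (\ref{e15})--(\ref{e14}) continue to hold; the point is that the right-hand sides depend on $a,b,c$ only through the symmetric combinations $a+a^{-1},$ $b+b^{-1},$ $c+c^{-1},$ and replacing any one of $a,b,c$ by its inverse therefore leaves these coefficients invariant. For $*$ the triple is reordered as $(A^*,A,A^\varepsilon)$ and the parameters become $(b^{-1},a^{-1},c^{-1};q^{-1});$ a direct substitution (using again the invariance of $x+x^{-1}$ under $x\mapsto x^{-1}$) shows that the three relations (\ref{e15})--(\ref{e14}) for the new triple with the new parameters are precisely the same three relations as before, only cyclically permuted. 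The case $\varepsilon$ is analogous with the cyclic shift $(A,A^*,A^\varepsilon)\mapsto(A^\varepsilon,A^*,A)$ and parameters $(c^{-1},b^{-1},a^{-1};q^{-1}).$

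The only conceptual subtlety, and hence the main bookkeeping obstacle, lies in matching sides of the $\mathbb{Z}_3$-symmetric relations under $*$ and $\varepsilon:$ one must track how the coefficients on the right of (\ref{e15})--(\ref{e14}) rotate when the roles of $A,A^*,A^\varepsilon$ are permuted and simultaneously $q$ is inverted. Once this bookkeeping is carried out for the generators, commutativity of the diagram for an arbitrary $g\in(\mathbb{Z}_2)^3\rtimes S_3$ follows by composing; uniqueness of $\pi^{-1}(\Psi^g)$ from Theorem~\ref{thm6.2} then forces $\pi((a,b,c;q)^g)=\pi(a,b,c;q)^g,$ which is the desired conclusion.
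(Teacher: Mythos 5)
Your proof takes essentially the same approach as the paper's: reduce to the five generators $*,\varepsilon,\downharpoonright,\downarrow,\Downarrow,$ fix $(a,b,c;q)\in T\hbox{-}QRAC_{red}$ with corresponding $\Psi,$ and verify for each generator $g$ that $\Psi^g$ and $(a,b,c;q)^g$ correspond via $\pi$; the paper dismisses this verification as ``routine to check'' against Definition~\ref{defn4.1}, whereas you streamline it by invoking the uniqueness characterization from Lemma~\ref{lem6.1}, which is a sound and efficient choice. One minor imprecision worth noting: since $\Psi^*$ swaps the first two coordinates of the triple (fixing the third) and $\Psi^\varepsilon$ swaps the first and third (fixing the second), the induced permutations of the relations (\ref{e15})--(\ref{e14}) are transpositions rather than the ``cyclic'' permutations you describe, but the conclusion you actually need --- that the set of three $\mathbb{Z}_3$-symmetric relations is preserved --- is unaffected.
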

\begin{proof}
Without loss we may assume that $g$ is one of $*,\varepsilon,\downharpoonright,\downarrow,\Downarrow.$ Fix $(a,b,c;q)\in T$-$QRAC_{red},$ and let $\Psi$ denote the Leonard triple system that corresponds to $(a,b,c;q)$ via $\pi.$ It is routine to check that $\Psi^g$ and $(a,b,c;q)^g$ correspond via $\pi$ according to Definition~\ref{defn4.1}. The result follows.
\end{proof}

In Lemma~\ref{lem1.18} we gave an action of $D_4$ on $T$-$QRAC_{red}.$ In Lemma~\ref{lem1.13} we gave an action of $(\mathbb{Z}_2)^3\rtimes S_3$ on $T$-$QRAC_{red}.$
Comparing (\ref{e4.6})--(\ref{e4.8}) and (\ref{e5.2}), (\ref{e5.5}), (\ref{e5.6}) we see that the action of
$(\mathbb{Z}_2)^3\rtimes S_3$ on $T$-$QRAC_{red}$ extends the action of $D_4$ on $T$-$QRAC_{red}.$

\section{Classification of Leonard triples of QRacah type}
The goal of this section is to classify up to isomorphism the Leonard triples of QRacah type. Recall the $(\mathbb{Z}_2)^3\rtimes S_3$ action on $T$-$QRAC_{red}$ from Lemma~\ref{lem1.13}.

\begin{defn}\label{defn4.4}
Let $T$-$QRAC_{red}/(\mathbb{Z}_2)^3$ denote the set of all $(\mathbb{Z}_2)^3$-orbits on $T$-$QRAC_{red}.$
\end{defn}

\begin{lem}\label{lem4.5}
Let $(a,b,c;q)$ denote an element of $T$-$QRAC_{red}.$ The $(\mathbb{Z}_2)^3$-orbit containing $(a,b,c;q)$ consists of the eight elements
\begin{align}\label{e8.2}
\begin{array}{llll}
(a, b, c;q), \qquad &(a^{-1}, b, c;q), \qquad &(a, b^{-1}, c;q), \qquad &(a, b, c^{-1};q),\\
(a^{-1}, b^{-1}, c^{-1};q), \qquad \qquad &(a, b^{-1}, c^{-1};q), \qquad \qquad &(a^{-1}, b, c^{-1};q), \qquad \qquad &(a^{-1}, b^{-1}, c;q).
\end{array}
\end{align}
\end{lem}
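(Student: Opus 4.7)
The plan is to directly apply the three generators of the normal subgroup $(\mathbb{Z}_2)^3 \trianglelefteq (\mathbb{Z}_2)^3\rtimes S_3$, namely $\downharpoonright$, $\downarrow$, $\Downarrow$, to the given element $(a,b,c;q)$, and observe that each generator inverts exactly one of the first three coordinates while fixing the others and $q$.

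More precisely, by formulas (\ref{e5.4})--(\ref{e5.6}) of Lemma~\ref{lem1.13}, the generator $\Downarrow$ inverts $a$, the generator $\downarrow$ inverts $b$, and the generator $\downharpoonright$ inverts $c$. Since these three involutions commute with one another (by the relations in (\ref{r2.2})) and act on disjoint coordinates, they generate a subgroup that is isomorphic to $(\mathbb{Z}_2)^3$ and that sends $(a,b,c;q)$ to the eight tuples $(a^{\pm 1},b^{\pm 1},c^{\pm 1};q)$. Listing the three $\binom{3}{0}+\binom{3}{1}+\binom{3}{2}+\binom{3}{3}=8$ sign-pattern combinations yields precisely the eight elements displayed in (\ref{e8.2}).

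There is essentially no obstacle: the verification reduces to reading off (\ref{e5.4})--(\ref{e5.6}) and composing the three commuting involutions. The only small point worth noting is that all eight listed tuples are genuinely elements of $T$-$QRAC_{red}$, which follows from Lemma~\ref{lem1.18} (closure of $T$-$QRAC_{red}$ under $D_4$, hence in particular under $\downarrow$ and $\Downarrow$) together with the analogous verification for $\downharpoonright$ that was used in the proof of Lemma~\ref{lem1.13}. Thus the $(\mathbb{Z}_2)^3$-orbit of $(a,b,c;q)$ is contained in $T$-$QRAC_{red}$ and consists of exactly the eight tuples in (\ref{e8.2}), as claimed.
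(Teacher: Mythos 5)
Your computation of the orbit is essentially the same as the paper's: apply $\downharpoonright, \downarrow, \Downarrow$ via (\ref{e5.4})--(\ref{e5.6}) and enumerate the resulting sign patterns. That part is fine. The remark about the eight tuples being in $T$-$QRAC_{red}$ is actually superfluous: Lemma~\ref{lem1.13} already establishes that $(\mathbb{Z}_2)^3\rtimes S_3$ acts on $T$-$QRAC_{red}$, so any orbit is automatically contained in it; no appeal to Lemma~\ref{lem1.18} is needed.

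The real gap is that you never verify the eight tuples are \emph{distinct}. Listing the $2^3$ sign patterns does not by itself show the orbit has size eight: if for instance $a^2=1$ then $(a,b,c;q)=(a^{-1},b,c;q)$ and the list collapses. The lemma claims the orbit ``consists of the eight elements,'' so distinctness must be checked. The paper closes this by invoking (T-RQRAC3): since $d\ge 3$, the forbidden list $q^{2d-2},q^{2d-4},\ldots,q^{2-2d}$ contains $q^0=1$, hence none of $a^2,b^2,c^2$ equals $1$, so $a\ne a^{-1}$, $b\ne b^{-1}$, $c\ne c^{-1}$ and all eight tuples are pairwise different. You should add this observation to complete the argument.
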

\begin{proof}
Using (\ref{e5.4})--(\ref{e5.6}) we find that the $(\mathbb{Z}_2)^3$-orbit containing $(a,b,c;q)$ consists of the sequences shown in (\ref{e8.2}). By (T-RQRAC3) none of $a^2, b^2, c^2$ is equal to $1,$ so the sequences shown in (\ref{e8.2}) are mutually distinct.
\end{proof}


\begin{defn}\label{defn4.2}
We define a map $\tau$ from $T$-$QRAC_{red}/(\mathbb{Z}_2)^3$ to the set of all isomorphism classes of Leonard triples over $\mathbb{K}$ that have diameter $d$ and QRacah type. Let $(a,b,c;q)\in T$-$QRAC_{red}.$ Let $(A,A^*,A^\varepsilon)$ denote the Leonard triple over $\mathbb{K}$ that corresponds to $(a,b,c;q)$ via $\pi.$ The map $\tau$ sends the $(\mathbb{Z}_2)^3$-orbit containing $(a,b,c;q)$ to the isomorphism class of $(A,A^*,A^\varepsilon)$ in the set of all Leonard triples. By Lemma~\ref{lem4.1} the map $\tau$ is well-defined.
\end{defn}

\begin{thm}\label{cor4.1}
The map $\tau$ from Definition~\ref{defn4.2} is a bijection.
\end{thm}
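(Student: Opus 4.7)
The plan is to show that $\tau$ is a bijection by factoring it through the $(\mathbb{Z}_2)^3$-orbits on $T$-$QRAC$ and invoking the bijection $\pi$ from Theorem~\ref{thm6.2} together with the equivariance of Lemma~\ref{lem4.1}. First I would observe that, by Lemma~\ref{lem4.1}, $\pi$ intertwines the $(\mathbb{Z}_2)^3$ actions on $T$-$QRAC_{red}$ and on $T$-$QRAC$, so $\pi$ descends to a bijection $\bar\pi: T\text{-}QRAC_{red}/(\mathbb{Z}_2)^3 \to T\text{-}QRAC/(\mathbb{Z}_2)^3$. It then remains to show that sending the $(\mathbb{Z}_2)^3$-orbit of the isomorphism class of a Leonard triple system $\Psi$ to the isomorphism class of its associated Leonard triple $(A,A^*,A^\varepsilon)$ (in the sense of Definition~\ref{defn3.1}) is a well-defined bijection $\lambda$ onto the set of isomorphism classes of Leonard triples over $\mathbb{K}$ of diameter $d$ and QRacah type; the map $\tau$ is then $\lambda\circ\bar\pi$.

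To verify $\lambda$ is well-defined, I would note that the action of the generators $\downharpoonright,\downarrow,\Downarrow$ of $(\mathbb{Z}_2)^3$ only permutes the orderings of the primitive idempotents, so the underlying triple of linear transformations $A,A^*,A^\varepsilon$ is unchanged; meanwhile an isomorphism of Leonard triple systems restricts to an isomorphism of the associated Leonard triples by Definition~\ref{defn2.2}. For surjectivity of $\lambda$: given any Leonard triple $(A,A^*,A^\varepsilon)$ of QRacah type, Definition~\ref{defn2.5} guarantees the associate class is nonempty, and by Lemma~\ref{lem1.17} together with the definition of QRacah type for a Leonard triple (given just after Lemma~\ref{lem1.17}), any Leonard triple system in this associate class belongs to $T$-$QRAC$.

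The main content is the injectivity of $\lambda$. Suppose two orbits are sent to the same isomorphism class of Leonard triples, represented by Leonard triple systems $\Psi_1$ on $V_1$ and $\Psi_2$ on $V_2$ whose associated Leonard triples are isomorphic via a $\mathbb{K}$-algebra isomorphism $\sigma:{\rm End}(V_1)\to{\rm End}(V_2)$. Then $\Psi_1^\sigma$ is a Leonard triple system on $V_2$ whose associated Leonard triple coincides with that of $\Psi_2$; by Definition~\ref{defn2.5} the associate class of a Leonard triple is exactly one $(\mathbb{Z}_2)^3$-orbit, so $\Psi_1^\sigma$ and $\Psi_2$ lie in the same $(\mathbb{Z}_2)^3$-orbit of Leonard triple systems on $V_2$. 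Since $[\Psi_1]=[\Psi_1^\sigma]$ in $T$-$QRAC$, the classes $[\Psi_1]$ and $[\Psi_2]$ lie in the same $(\mathbb{Z}_2)^3$-orbit of $T$-$QRAC$, so the two starting orbits coincide under $\bar\pi^{-1}$.

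The only slightly subtle point, and what I would treat with care, is confirming that the $(\mathbb{Z}_2)^3$-orbit structure defined on $T$-$QRAC$ (via the action on isomorphism classes) matches the associate-class structure on Leonard triple systems up to isomorphism; this is exactly what Definition~\ref{defn2.5} asserts, and it is the step that makes $\lambda$ a well-defined injection. Once that bookkeeping is in place, Theorem~\ref{thm6.2} and Lemma~\ref{lem4.1} immediately yield that $\bar\pi$ is a bijection, and therefore $\tau=\lambda\circ\bar\pi$ is a bijection as claimed.
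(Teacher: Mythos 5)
Your proof is correct and relies on the same three ingredients as the paper's: the bijectivity of $\pi$ (Theorem~\ref{thm6.2}), the $(\mathbb{Z}_2)^3$-equivariance of $\pi$ (Lemma~\ref{lem4.1}), and the fact that the associate class of a Leonard triple is exactly a $(\mathbb{Z}_2)^3$-orbit of Leonard triple systems (Definition~\ref{defn2.5}). Your factoring $\tau=\lambda\circ\bar\pi$ through $T$-$QRAC/(\mathbb{Z}_2)^3$ is a tidier piece of bookkeeping than the paper's direct verification of surjectivity and injectivity, but the underlying argument is essentially the same.
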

\begin{proof}
We first show that $\tau$ is surjective. Let $(A,A^*,A^\varepsilon)$ denote a Leonard triple on $V$ that has QRacah type. By Theorem~\ref{thm6.2} the map $\pi$ is surjective, so there exists an element $(a,b,c;q)$ of $T$-$QRAC_{red}$ that corresponds to $(A,A^*,A^\varepsilon)$ via $\pi.$ Therefore $\tau$ is surjective. We now show that $\tau$ is injective. Suppose we are given $(a,b,c,q)$ and $(a',b',c';q')$ in $T$-$QRAC_{red}$ that correspond to the same Leonard triple $(A,A^*,A^\varepsilon)$ via $\pi.$ We show that $(a,b,c,q)$ and $(a',b',c';q')$ are in the same $(\mathbb{Z}_2)^3$-orbit. Let $\Psi,$ $\Psi'$ denote the Leonard triple systems which are associated with $(A,A^*,A^\varepsilon)$ and correspond to $(a,b,c,q),$ $(a',b',c';q')$ via $\pi,$ respectively. By Definition~\ref{defn2.5} there exists an element $g$ in $(\mathbb{Z}_2)^3$ such that $\Psi'=\Psi^g.$ By this and by Lemma~\ref{lem4.1} the Leonard triple system $\Psi'$ and $(a,b,c;q)^g$ correspond via $\pi.$ Since $\pi$ is injective we see that $(a',b',c';q')=(a,b,c;q)^g.$ We have shown that $\tau$ is injective. The result follows.
\end{proof}

\begin{defn}\label{defn6.1}
For $(a,b,c;q)\in T$-$QRAC_{red}$ define
$$
\widehat{(a,b,c;q)}=(a+a^{-1},b+b^{-1},c+c^{-1};q).
$$
Let $\widehat{T\hbox{-}QRAC_{red}}$ denote the set of sequences $\widehat{(a,b,c;q)}$ where
$(a,b,c;q) \in T$-$QRAC_{red}.$
\end{defn}

\begin{defn}\label{defn6.2}
Observe that the map $(a,b,c;q) \mapsto \widehat{(a,b,c;q)}$ induces a map $T$-$QRAC_{red}/(\mathbb{Z}_2)^3 \to \widehat{T\mbox{-}QRAC_{red}}$ which we denote by $\xi.$
\end{defn}

\begin{thm}\label{thm8.1}
The map $\xi$ from Definition~\ref{defn6.2} is a bijection.
\end{thm}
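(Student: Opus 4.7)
The plan is to verify both surjectivity and injectivity of $\xi$ directly, exploiting the fact that $a+a^{-1}$ (together with the nonzero constraint $a\neq 0$) determines the unordered pair $\{a,a^{-1}\}$.

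First, surjectivity is essentially built into the definition. By Definition~\ref{defn6.1} the set $\widehat{T\hbox{-}QRAC_{red}}$ consists of exactly those sequences of the form $\widehat{(a,b,c;q)}$ with $(a,b,c;q)\in T\hbox{-}QRAC_{red}$, and the map $(a,b,c;q)\mapsto \widehat{(a,b,c;q)}$ is constant on $(\mathbb{Z}_2)^3$-orbits because $a$ and $a^{-1}$ produce the same value $a+a^{-1}$, and similarly for $b$ and $c$. Thus $\xi$ is well-defined (as already noted in Definition~\ref{defn6.2}) and every element of $\widehat{T\hbox{-}QRAC_{red}}$ is hit.

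Next, for injectivity, suppose $(a,b,c;q)$ and $(a',b',c';q')$ are elements of $T\hbox{-}QRAC_{red}$ with $\widehat{(a,b,c;q)}=\widehat{(a',b',c';q')}$. Comparing the last coordinate gives $q=q'$. Comparing the first coordinate gives $a+a^{-1}=a'+a'^{-1}$; since $a'$ is a nonzero root of $x^2-(a+a^{-1})x+1$, which factors as $(x-a)(x-a^{-1})$, we conclude $a'\in\{a,a^{-1}\}$. The same argument applied to $b,b'$ and $c,c'$ gives $b'\in\{b,b^{-1}\}$ and $c'\in\{c,c^{-1}\}$. Therefore $(a',b',c';q')$ is among the eight tuples obtained from $(a,b,c;q)$ by independently inverting any subset of $\{a,b,c\}$. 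By Lemma~\ref{lem4.5} this is precisely the $(\mathbb{Z}_2)^3$-orbit of $(a,b,c;q)$, so $(a,b,c;q)$ and $(a',b',c';q')$ represent the same element of $T\hbox{-}QRAC_{red}/(\mathbb{Z}_2)^3$.

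There is no substantial obstacle; the only point to be careful about is that each of the eight candidate tuples really does lie in $T\hbox{-}QRAC_{red}$, which is already guaranteed by Lemma~\ref{lem1.18} (the $D_4$ action on $T\hbox{-}QRAC_{red}$ includes $\downharpoonright,\downarrow,\Downarrow$ by Lemma~\ref{lem1.13}, and these generate the $(\mathbb{Z}_2)^3$ subgroup). Combining well-definedness, surjectivity, and injectivity yields the theorem.
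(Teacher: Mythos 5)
Your proof is correct and follows essentially the same route as the paper: surjectivity is immediate from the definition of $\widehat{T\mbox{-}QRAC_{red}}$, and injectivity comes from solving $\widehat{(x,y,z;t)}=\widehat{(a,b,c;q)}$ coordinate by coordinate (noting $x^2-(a+a^{-1})x+1=(x-a)(x-a^{-1})$) to conclude that $(x,y,z;t)$ is one of the eight tuples in Lemma~\ref{lem4.5}, hence in the $(\mathbb{Z}_2)^3$-orbit of $(a,b,c;q)$. You spell out the quadratic-root argument that the paper compresses into ``solving (\ref{e8.3})'', but the substance is identical.
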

\begin{proof}
By construction $\xi$ is surjective. To show that $\xi$ is injective, fix $(a,b,c;q)\in T$-$QRAC_{red}$ and let $(x,y,z;t)\in T$-$QRAC_{red}$ that satisfies
\begin{align}\label{e8.3}
\widehat{(x,y,z;t)}=\widehat{(a,b,c;q)}.
\end{align}
Solving (\ref{e8.3}) for $x,y,z,t$ we find that $(x,y,z;t)$ is one of the sequences shown in (\ref{e8.2}). Now the map $\xi$ is injective in view of Lemma~\ref{lem4.5}. The result follows.
\end{proof}

Combining Theorem~\ref{cor4.1} and Theorem~\ref{thm8.1} we find that the following three sets are in bijection:
\begin{enumerate}
\item[$\bullet$] The set of isomorphism classes of Leonard triples over $\mathbb{K}$ that have diameter $d$ and QRacah type.

\item[$\bullet$] The set $T$-$QRAC_{red}/(\mathbb{Z}_2)^3.$

\item[$\bullet$] The set $\widehat{T\mbox{-}QRAC_{red}}.$
\end{enumerate}

\section{Comments}

Let $(A,A^*, A^\varepsilon)$ denote a Leonard triple of QRacah type. We saw in Theorem~\ref{thm2.3} that $A,A^*, A^\varepsilon$ satisfy the $\mathbb{Z}_3$-symmetric Askey-Wilson relations. We now mention some other relations that are satisfied by $A,A^*, A^\varepsilon.$

\begin{prop}\label{prop9.1}
Let $(a,b,c;q)\in T$-$QRAC_{red}.$ Let $(A,A^*,A^\varepsilon)$
denote the Leonard triple over $\mathbb{K}$ that corresponds to $(a,b,c;q)$ via
$\pi.$ Then $\psi I$ is equal to each of
\begin{align*}
\begin{split}
&qAA^*A^\varepsilon+q^{2}A^2+q^{-2}A^{*2}+q^2A^{\varepsilon2}-q\alpha A-q^{-1}\alpha^*A^*-q\alpha^\varepsilon A^\varepsilon,\\
&qA^\varepsilon AA^*+q^{2}A^{\varepsilon2}+q^{-2}A^2+q^2A^{*2}-q \alpha^\varepsilon A^\varepsilon-q^{-1}\alpha A-q \alpha^* A^*,\\
&qA^*A^\varepsilon A+q^{2}A^{*2}+q^{-2}A^{\varepsilon2}+q^2A^2-q\alpha^* A^*-q^{-1}\alpha^\varepsilon A^\varepsilon-q\alpha A,\\
&q^{-1}A^*AA^\varepsilon+q^{-2}A^{*2}+q^{2}A^2+q^{-2}A^{\varepsilon2}-q^{-1}\alpha^* A^*-q\alpha A-q^{-1}\alpha^\varepsilon A^\varepsilon,\\
&q^{-1}A^\varepsilon A^*A+q^{-2}A^{\varepsilon2}+q^{2}A^{*2}+q^{-2}A^2-q^{-1}\alpha^\varepsilon A^\varepsilon-q\alpha^* A^*-q^{-1}\alpha A,\\
&q^{-1}AA^\varepsilon A^*+q^{-2}A^2+q^{2}A^{\varepsilon2}+q^{-2}A^{*2}-q^{-1}\alpha A-q\alpha^\varepsilon A^\varepsilon-q^{-1}\alpha^* A^*,
\end{split}
\end{align*}
where
\begin{align*}
\psi&=(q+q^{-1})^2-(q^{d+1}+q^{-d-1})^2-(a+a^{-1})^2-(b+b^{-1})^2-(c+c^{-1})^2\\
&\qquad -(a+a^{-1})(b+b^{-1})(c+c^{-1})(q^{d+1}+q^{-d-1})
\end{align*}
and
\begin{align*}
\alpha&=(b+b^{-1})(c+c^{-1})+(a+a^{-1})(q^{d+1}+q^{-d-1}),\\
\alpha^*&=(c+c^{-1})(a+a^{-1})+(b+b^{-1})(q^{d+1}+q^{-d-1}),\\
\alpha^\varepsilon&=(a+a^{-1})(b+b^{-1})+(c+c^{-1})(q^{d+1}+q^{-d-1}).
\end{align*}
\end{prop}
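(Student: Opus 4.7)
The plan is to deduce Proposition~\ref{prop9.1} from the three $\mathbb{Z}_3$-symmetric Askey--Wilson relations of Theorem~\ref{thm2.3} and their consequences under multiplication by $A, A^*, A^\varepsilon$.  After clearing denominators, those three relations read
\begin{align*}
(R_1)\colon\quad qA^*A^\varepsilon - q^{-1}A^\varepsilon A^* + (q^2-q^{-2})A &= (q-q^{-1})\alpha\, I,\\
(R_2)\colon\quad qA^\varepsilon A - q^{-1}AA^\varepsilon + (q^2-q^{-2})A^* &= (q-q^{-1})\alpha^*\, I,\\
(R_3)\colon\quad qAA^* - q^{-1}A^*A + (q^2-q^{-2})A^\varepsilon &= (q-q^{-1})\alpha^\varepsilon\, I,
\end{align*}
with $\alpha, \alpha^*, \alpha^\varepsilon$ exactly as defined in the proposition.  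Multiplying each $(R_i)$ on the left and on the right by the generator that it does not contain produces six ``cubic'' identities; for example, right-multiplying $(R_3)$ by $A^\varepsilon$ yields
\[
qAA^*A^\varepsilon - q^{-1}A^*AA^\varepsilon + (q^2-q^{-2})A^{\varepsilon 2} = (q-q^{-1})\alpha^\varepsilon A^\varepsilon,
\]
and the other five arise symmetrically from $A^\varepsilon\cdot(R_3)$, $(R_1)\cdot A$, $A\cdot(R_1)$, $(R_2)\cdot A^*$, and $A^*\cdot(R_2)$.

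Label the six expressions in the proposition as $\mathcal{E}_1,\ldots,\mathcal{E}_6$ in the order stated.  A direct term-by-term comparison shows that each of the six pairwise differences
\[
\mathcal{E}_1-\mathcal{E}_4,\quad \mathcal{E}_3-\mathcal{E}_5,\quad \mathcal{E}_2-\mathcal{E}_5,\quad \mathcal{E}_1-\mathcal{E}_6,\quad \mathcal{E}_2-\mathcal{E}_6,\quad \mathcal{E}_3-\mathcal{E}_4
\]
coincides, after collecting like terms, with precisely one of the six cubic identities above rearranged to the form ``$\text{stuff}=0$''; so each of these differences vanishes.  These pairings link the $\mathcal{E}_i$ into a single $6$-cycle $\mathcal{E}_1\to\mathcal{E}_4\to\mathcal{E}_3\to\mathcal{E}_5\to\mathcal{E}_2\to\mathcal{E}_6\to\mathcal{E}_1$, so all six expressions agree.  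Denote their common value by $\mathcal{C}$.

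It remains to show $\mathcal{C} = \psi I$.  The cleanest route is to pass to the matrix realization of $(A,A^*,A^\varepsilon)$ described after Definition~\ref{defn4.1}: in the $\Phi$-split basis, $A$ is lower bidiagonal with diagonal $\{\theta_i\}$ and subdiagonal entries equal to $1$, $A^*$ is upper bidiagonal with diagonal $\{\theta_i^*\}$ and superdiagonal entries $\{\varphi_i\}$, and $A^\varepsilon$ is the explicit tridiagonal matrix of Lemma~\ref{lem3.0}.  By the bandwidths of these three matrices, $\mathcal{E}_1$ is itself at worst tridiagonal, so it suffices to verify that its three nonzero diagonals match $\psi I$.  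Substituting the closed forms (\ref{e7})--(\ref{e10}) together with Lemma~\ref{lem3.0}, the sub- and superdiagonal entries collapse to zero, while each diagonal entry $(\mathcal{E}_1)_{ii}$ simplifies via the $q$-identities of Section~4 (Lemmas~\ref{lem1.2}, \ref{lem2.2}) to the $i$-independent value $\psi$.  An alternative uses Schur's lemma: the six forms of $\mathcal{C}$ together with $(R_1)$--$(R_3)$ yield $[\mathcal{C},A]=0$ and $[\mathcal{C},A^*]=0$; since $A,A^*$ generate $\mathrm{End}(V)$ (as $(A,A^*)$ is an irreducible Leonard pair), $\mathcal{C}$ must be scalar, and the scalar is then read off by applying $\mathcal{C}$ to a nonzero vector in $E_0^*V$.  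The main obstacle is this final scalar bookkeeping: because $\psi$ is a degree-three symmetric expression in $a{+}a^{-1},b{+}b^{-1},c{+}c^{-1}$ with $q$-shifted coefficients, showing that the many $q$-shifted products arising from $\theta_i\theta_i^*(A^\varepsilon)_{ii}$, $\theta_i^2$, $\theta_i^{*2}$, and the squares of $A^\varepsilon$ collapse to $\psi$ requires sustained but mechanical use of the Section~4 identities.
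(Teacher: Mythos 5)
Your first step is a genuine and attractive improvement on the paper's terse proof: the pairwise differences $\mathcal{E}_1-\mathcal{E}_4$, $\mathcal{E}_2-\mathcal{E}_5$, $\mathcal{E}_3-\mathcal{E}_5$, $\mathcal{E}_1-\mathcal{E}_6$, $\mathcal{E}_2-\mathcal{E}_6$, $\mathcal{E}_3-\mathcal{E}_4$ really do reduce, after cancellation, to the six cubic consequences of $(R_1)$--$(R_3)$ and therefore vanish; since the corresponding graph on $\{1,\dots,6\}$ is connected, all six expressions share a common value $\mathcal{C}$. This organizes what the paper treats as one undifferentiated matrix check into an algebraic identity plus one remaining verification.

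The gap is in the remaining verification. Your bandwidth claim is wrong: in the $\Phi$-split basis $A$ is lower bidiagonal (offsets $0,-1$), $A^*$ is upper bidiagonal (offsets $0,+1$), and $A^\varepsilon$ is tridiagonal (offsets $-1,0,+1$), so $AA^*A^\varepsilon$, $A^2$, $A^{*2}$, $A^{\varepsilon 2}$ collectively occupy offsets $-2,\dots,+2$ and $\mathcal{E}_1$ is \emph{pentadiagonal}, not tridiagonal. Thus "its three nonzero diagonals" is five, and the computation you describe is not smaller than the one the paper performs; you must show the $\pm 2$ diagonals vanish as well. The Schur's-lemma alternative is also incomplete: you assert $[\mathcal{C},A]=[\mathcal{C},A^*]=0$ "from the six forms of $\mathcal{C}$ together with $(R_1)$--$(R_3)$", but this centrality of the Casimir is itself a nontrivial computation in the $\mathbb{Z}_3$-symmetric Askey--Wilson algebra and is not a consequence of merely knowing the six expressions coincide. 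Either you verify all five diagonals of one $\mathcal{E}_i$ against $\psi I$ using the explicit entries from (\ref{e7})--(\ref{e10}) and Lemma~\ref{lem3.0} (which is what the paper's one-line proof, "Use the matrix forms for $A,A^*,A^\varepsilon$ displayed below Definition~\ref{defn4.3}", amounts to), or you supply the commutator calculation establishing centrality before invoking irreducibility. As written, neither route is complete.
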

\begin{proof}
Use the matrix forms for $A,A^*,A^\varepsilon$ displayed below Definition~\ref{defn4.3}.
\end{proof}

\section{Acknowledgements}
This work was realized while the author visited the Department of Mathematics, University of
Wisconsin-Madison during January 17th and December 14th, 2010. The author would like to thank Prof. Paul Terwilliger for his many valuable ideas and suggestions.
Most of the calculations were performed using Maple software. This research was supported by the NSC grant 98-2917-I-009-113 of Taiwan.

\end{document}